\documentclass[a4paper]{amsart}
\usepackage{graphicx}
\usepackage{amsmath}
\usepackage{amssymb}
\usepackage{amsfonts}
\usepackage{amsthm}
\usepackage[all]{xy}
\usepackage{eucal}


\newcommand{\A}{\mathbf{A}}
\newcommand{\NA}{N\!\mathbf{A}}
\newcommand{\sSets}{\mathbf{sSets}}

\theoremstyle{plain}
\newtheorem{theorem}{Theorem}[section]
\newtheorem{theorema}{Theorem}

\newtheorem{lemma}[theorem]{Lemma}
\newtheorem{proposition}[theorem]{Proposition}
\newtheorem{propositiona}[theorema]{Proposition}

\newtheorem{corollary}[theorem]{Corollary}
\newtheorem{corollarya}[theorema]{Corollary}

\newtheorem*{proposition*}{Proposition}
\newtheorem*{corollary*}{Corollary}

\theoremstyle{definition}

\newtheorem{remark}[theorem]{Remark}

\makeatletter
\@addtoreset{theorem}{section}
\@addtoreset{subsection}{section}
\makeatother

\DeclareMathAlphabet{\mathpzc}{OT1}{pzc}{m}{it}

\begin{document}

\title{Left fibrations and homotopy colimits}

\author[Gijs Heuts]{Gijs Heuts}
\address{Harvard University, Department of Mathematics, 1 Oxford Street, 02138 Cambridge, Massachusetts, USA}
\email{gheuts@math.harvard.edu}

\author[Ieke Moerdijk]{Ieke Moerdijk}
\address{Radboud Universiteit Nijmegen, Institute for Mathematics, Astrophysics and Particle Physics, Heyendaalseweg 135, 6525 AJ Nijmegen, The Netherlands}
\email{i.moerdijk@math.ru.nl}

\date{}

\begin{abstract}
For a small category $\A$, we prove that the homotopy colimit functor from the category of simplicial diagrams on $\A$ to the category of simplicial sets over the nerve of $\A$ establishes a left Quillen equivalence between the projective (or Reedy) model structure on the former category and the covariant model structure on the latter. We compare this equivalence to a Quillen equivalence in the opposite direction previously established by Lurie. From our results we deduce that a categorical equivalence of simplicial sets induces a Quillen equivalence on the corresponding over-categories, equipped with the covariant model structures. Also, we show that versions of Quillen's Theorems A and B for $\infty$-categories easily follow.
\end{abstract}

\maketitle


\section{Introduction and main results}

Let $\A$ be a small category and consider the category $\sSets^{\A}$ of functors from $\A$ to the category of simplicial sets. Such functors are also frequently referred to as \emph{simplicial diagrams} on $\A$, or \emph{simplicial presheaves} on the opposite category $\A^{\mathrm{op}}$. The well-known construction of \emph{homotopy colimits} provides a functor
\begin{equation*}
h_!: \sSets^{\A} \longrightarrow \sSets / \NA 
\end{equation*}
from the category of simplicial diagrams on $\A$ to the category of simplicial sets over the nerve of $\A$. It maps a diagram $F$ to the simplicial set $h_!(F)$ with as $n$-simplices the pairs $(x, a_0 \rightarrow \cdots \rightarrow a_n)$ consisting of an $n$-simplex $a_0 \rightarrow \cdots \rightarrow a_n$ in $\NA$ and an $n$-simplex $x$ in $F(a_0)$. There is also a functor
\begin{equation*}
r_!: \sSets / \NA \longrightarrow \sSets^{\A}
\end{equation*}
in the other direction, which we will refer to as the \emph{rectification functor}. For a simplicial set over $\NA$, say $\pi: X \longrightarrow \NA$, we abusively denote its image under $r_!$ by $r_!(X)$, leaving the map $\pi$ implicit. Its value $r_!(X)(b)$ at an object $b$ of $\A$ has as $n$-simplices the pairs $(x, \beta)$ with $x \in X_n$ and $\beta: \pi(x_n) \rightarrow b$ a morphism in $\A$. (Here $x_n$ denotes the last vertex of $x$.) Equivalently, it may be described by
\begin{equation*}
r_!(X)(b) = X \times_{\NA} N(\A/b).
\end{equation*}
This functor is closely related to the construction of a split cofibered category out of an arbitrary category over $\A$ \cite{sga1}. \par
The goal of this paper is to analyze the homotopy colimit functor $h_!$ from the point of view of model categories. To this end, we consider the projective (and possibly Reedy) model structures on $\sSets^{\A}$ and the covariant model structure on $\sSets/\NA$. (The definitions of these are reviewed in Section \ref{sec:modelstructures}.) We will give direct, complete and self-contained proofs of the following three statements: 

\begin{propositiona}
\label{prop:A}
\begin{itemize}
\item[(a)] The homotopy colimit functor $h_!$ is part of a Quillen pair (left adjoint on the left)
\[
\xymatrix{
h_!: \sSets^{\A} \ar@<.5ex>[r] & \sSets/\NA : h^* \ar@<.5ex>[l]
}
\]
with respect to the projective model structure on $\sSets^\A$ and the covariant model structure on $\sSets/\NA$.
\item[(b)] In case $\A$ is a (generalized) Reedy category, the same is true for the Reedy model structure on $\sSets^\A$.
\end{itemize}
\end{propositiona}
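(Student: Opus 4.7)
The plan is to verify the left Quillen property by identifying the right adjoint $h^*$ and checking that $h_!$ sends generating (trivial) cofibrations to (trivial) cofibrations. First I would establish the explicit formulas $h_!(K \times \A(a, -)) = K \times N(a/\A)$ for $K \in \sSets$ and $a \in \A$, and, dually via Yoneda, $h^*(X)(b) = \mathrm{Map}_{/\NA}(N(b/\A), X)$. Here $N(a/\A) \to \NA$ is the target projection, and a routine check shows that it is a left fibration, with unique left horn fillers supplied by composition in $\A$.

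For part (a), the projective model structure is cofibrantly generated by the maps $\partial\Delta^n \times \A(a, -) \hookrightarrow \Delta^n \times \A(a, -)$ and $\Lambda^n_k \times \A(a, -) \hookrightarrow \Delta^n \times \A(a, -)$, for $0 \leq k \leq n$. Under $h_!$ these become
\[
\partial\Delta^n \times N(a/\A) \hookrightarrow \Delta^n \times N(a/\A)
\qquad\text{and}\qquad
\Lambda^n_k \times N(a/\A) \hookrightarrow \Delta^n \times N(a/\A).
\]
The first is plainly a monomorphism, hence a covariant cofibration. For the second, I would prove the lemma that for any left fibration $L \to S$ and any trivial Kan--Quillen cofibration $A \hookrightarrow B$, the map $A \times L \hookrightarrow B \times L$ is a covariant trivial cofibration over $S$. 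The key input is that $\mathrm{Map}_{/S}(L, E)$ is a Kan complex for every left fibration $E \to S$; combined with the adjunction
\[
\mathrm{Map}_{/S}(A \times L, E) \;\cong\; \mathrm{Map}(A, \mathrm{Map}_{/S}(L, E)),
\]
this converts the Kan equivalence $A \to B$ into an equivalence of mapping spaces into every left fibration, which is the required property.

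For part (b), the Reedy and projective model structures share the same weak equivalences. Every Reedy cofibration is in particular a pointwise monomorphism, so $h_!$ still sends Reedy cofibrations to covariant cofibrations. For preservation of trivial cofibrations, one option is to enumerate the generating Reedy (trivial) cofibrations in the generalized Reedy setting and check each class directly, reducing to the calculations of part (a) via latching-object inductions. A slicker option is to show that $h_!$ preserves arbitrary pointwise weak equivalences---a reasonable expectation since $h_!$ computes a homotopy colimit---from which preservation of Reedy trivial cofibrations is immediate.

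The main obstacle is the lemma underlying part (a) in the right-horn case $k = n$: the inclusion $\Lambda^n_n \hookrightarrow \Delta^n$ is \emph{not} left anodyne, so $\Lambda^n_n \times N(a/\A) \hookrightarrow \Delta^n \times N(a/\A)$ cannot be shown to be a covariant equivalence by direct horn-filling arguments over $\NA$. The proof must genuinely exploit the left-fibration property of $N(a/\A) \to \NA$, in order to transport the Kan triviality of the horn inclusion into covariant triviality through Kan-complex-valued mapping spaces.
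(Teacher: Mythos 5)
Your part (a) is correct and follows the same skeleton as the paper: reduce to the generating (trivial) cofibrations, compute $h_!(K\times\A(a,-)) \cong K\times N(a/\A)$, and note that $h_!$ visibly preserves monomorphisms. Where you differ is in justifying that $\Lambda^n_k\times N(a/\A)\to\Delta^n\times N(a/\A)$ is a covariant trivial cofibration for all $0\le k\le n$. The paper disposes of this in one line: the covariant model structure is simplicial (tensored over the Kan--Quillen structure), so tensoring the Kan--Quillen trivial cofibration $\Lambda^n_k\to\Delta^n$ with \emph{any} object $X\to\NA$ (all objects being cofibrant) yields a covariant trivial cofibration by SM7. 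Your route, via $\mathrm{Map}_{/\NA}(A\times L,E)\cong\mathrm{Map}(A,\mathrm{Map}_{/\NA}(L,E))$ and the Kan-ness of mapping spaces into left fibrations, is also valid, but it secretly relies on the same simpliciality (both for the mapping-space characterization of trivial cofibrations and for SM7 in the fibration direction), and the hypothesis that $L$ be a left fibration is not needed: $\mathrm{Map}_{/S}(X,E)$ is a Kan complex for arbitrary $X\to S$ once $E\to S$ is a left fibration. So your closing claim that the $k=n$ case ``must genuinely exploit the left-fibration property of $N(a/\A)\to\NA$'' is not accurate; the simplicial structure handles $\Lambda^n_n\times X\to\Delta^n\times X$ for arbitrary $X$ over $\NA$, and this extra generality is exactly what part (b) requires.

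Part (b) is where there is a genuine gap. Your ``slicker option''---that $h_!$ sends arbitrary pointwise weak equivalences to covariant weak equivalences over $\NA$---is asserted, not proved, and does not follow from $h_!$ being a model for the homotopy colimit or from part (a): Ken Brown's lemma only gives preservation of weak equivalences between projectively cofibrant diagrams, and a realization lemma of this covariant strength for all diagrams is essentially as deep as the paper's main theorem. Your first option is the right one, but it is left as a gesture. The content is that the generating trivial Reedy cofibrations are already in pushout-product form, $\Lambda^n_k\times\A(b,-)\cup\Delta^n\times\A^-(b,-)\to\Delta^n\times\A(b,-)$, where $\A^-(b,-)\subseteq\A(b,-)$ consists of the maps admitting a nontrivial factorization through $\A^-$; this subfunctor encodes the latching objects, since $\varinjlim_{b\to c}\A(c,-)\cong\A^-(b,-)$. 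Applying $h_!$ gives the pushout-product of $\Lambda^n_k\to\Delta^n$ with the monomorphism $N^-(b/\A)\to N(b/\A)$, where $N^-(b/\A)=h_!\A^-(b,-)$, and SM7 for the simplicial covariant model structure again concludes. Note that $N^-(b/\A)\to N(b/\A)$ is merely a monomorphism, not of the form ``product with a left fibration,'' which is why the key lemma in part (a) should be proved for arbitrary cofibrations over $\NA$ from the start.
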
   

\begin{propositiona}
\label{prop:B}
The rectification functor $r_!$ is part of a Quillen pair
\[
\xymatrix{
r_!: \sSets/\NA \ar@<.5ex>[r] & \sSets^\A : r^* \ar@<.5ex>[l]
}
\]
between the covariant and projective model structures.
\end{propositiona}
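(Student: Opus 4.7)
My plan is to establish the Quillen pair condition by first constructing the right adjoint and then verifying that $r_!$ preserves the generating cofibrations and generating trivial cofibrations of the covariant model structure on $\sSets/\NA$, throughout exploiting the pullback description $r_!(X)(b) = X \times_{\NA} N(\A/b)$. This formula exhibits $r_!$ as a colimit-preserving functor between presentable categories (using that colimits in $\sSets/\NA$ are computed underlying and that pullbacks in $\sSets$ are universal), so that a right adjoint $r^*$ exists by the adjoint functor theorem; an explicit but somewhat intricate description of $r^*$ can be given but is not needed.

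For cofibrations, the generators of the covariant model structure are the inclusions $\partial \Delta^n \hookrightarrow \Delta^n$ equipped with a structure map $\sigma: \Delta^n \to \NA$, corresponding to a chain $a_0 \to \cdots \to a_n$ in $\A$. The key step is to identify a pushout square
\[
r_!(\Delta^n \xrightarrow{\sigma} \NA) \;\cong\; r_!(\partial \Delta^n) \cup_{\partial \Delta^n \otimes \A(a_n,-)} \bigl(\Delta^n \otimes \A(a_n,-)\bigr)
\]
that exhibits $r_!(\partial \Delta^n) \to r_!(\Delta^n)$ as a pushout of the generating projective cofibration $\partial \Delta^n \otimes \A(a_n,-) \hookrightarrow \Delta^n \otimes \A(a_n,-)$. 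On the faces $d_j \Delta^n$ for $j < n$, which still contain the last vertex $a_n$, the attaching map is the evident inclusion; on the remaining face $d_n \Delta^n$, whose last vertex drops down to $a_{n-1}$, it is induced by the natural transformation $\A(a_n,-) \to \A(a_{n-1},-)$ given by precomposition with the structure morphism $a_{n-1} \to a_n$. I would verify this pushout decomposition directly on $k$-simplices at any object $b$, by unwinding the pullback formula and partitioning the simplices $x: [k] \to [n]$ according to whether $x(k) = n$ or $x(k) < n$.

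For trivial cofibrations, the generators are the left horn inclusions $\Lambda^n_i \hookrightarrow \Delta^n$ with $0 \leq i < n$, again equipped with a structure map to $\NA$. A variant of the previous pushout description shows that $r_!$ sends these to projective cofibrations. For the weak equivalence condition, I would evaluate at each object $b$: the pullback formula yields
\[
r_!(\Lambda^n_i)(b) \hookrightarrow r_!(\Delta^n)(b) \;=\; \Lambda^n_i \times_{\NA} N(\A/b) \hookrightarrow \Delta^n \times_{\NA} N(\A/b),
\]
which is the pullback of the left anodyne map $\Lambda^n_i \hookrightarrow \Delta^n$ along the right fibration $\Delta^n \times_{\NA} N(\A/b) \to \Delta^n$ (itself a pullback of the right fibration $N(\A/b) \to \NA$). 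By the classical stability of left anodyne maps under pullback along right fibrations, the result is left anodyne, and in particular a weak equivalence of simplicial sets. Hence $r_!$ takes the generating trivial cofibrations to objectwise, and therefore projective, weak equivalences.

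The main technical content lies in the pushout decomposition of the second paragraph: the subtle point is the precomposition map $\A(a_n,-) \to \A(a_{n-1},-)$ on the $n$-th face, which treats the last vertex asymmetrically and is precisely what aligns $r_!$ with the \emph{covariant} model structure (rather than the contravariant one). The stability of left anodyne maps under pullback along right fibrations, invoked in the third paragraph, is the other essential input and is where the left-handedness of the covariant structure again becomes visible.
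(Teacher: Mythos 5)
Your proof is correct, but it takes a genuinely different route from the paper's. The paper argues entirely on the right-adjoint side: it gives an explicit description of $r^*(F)$ and then observes that, for a map $\Delta^n \rightarrow r^*(F)$ over a simplex $\alpha = (a_0 \rightarrow \cdots \rightarrow a_n)$ of $\NA$, the data concentrated on a union $V$ of faces of $\Delta^n$ that \emph{all contain the final vertex} $n$ is determined by a single map $V \rightarrow F(a_n)$. This reduces the right lifting property of $r^*(G) \rightarrow r^*(F)$ against $\partial\Delta^n \hookrightarrow \Delta^n$ (resp. $\Lambda^n_k \hookrightarrow \Delta^n$, $k<n$) to a single lifting problem against $G(a_n) \rightarrow F(a_n)$, so $r^*$ preserves trivial fibrations and carries projective fibrations to left fibrations; the conclusion then follows from the standard characterization of a left Quillen functor in terms of fibrations between fibrant objects. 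You instead argue on the left-adjoint side via a pushout decomposition of $r_!$ applied to boundary and horn inclusions. Your decomposition is correct (and can be verified degreewise exactly as you indicate, by splitting simplices $u\colon [m]\to[n]$ according to whether they are surjective), and it is worth noting that it already does more work than you give it credit for: for $0 \leq i < n$, every $u$ not factoring through $\Lambda^n_i$ has $n$ in its image, so the same decomposition exhibits $r_!(\Lambda^n_i) \rightarrow r_!(\Delta^n)$ as a pushout of the generating \emph{trivial} projective cofibration $\Lambda^n_i \otimes \A(a_n,-) \rightarrow \Delta^n \otimes \A(a_n,-)$. This makes your separate cofinality argument (pulling back the left horn along a right fibration) unnecessary. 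That argument is sound as a claim about weak equivalences, since left anodyne maps are initial and pull back along right fibrations to weak homotopy equivalences, though the assertion that the pullback is itself \emph{left anodyne} is stronger than standard results give and also stronger than you need. Both proofs hinge on exactly the same combinatorial asymmetry --- the faces of $\partial\Delta^n$ and of left horns retain the final vertex --- which is where the "left" in left fibration and the covariance of the model structure enter; the paper extracts this via lifting problems against $r^*$, while you extract it via a cell decomposition of $r_!$.
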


Note that since the identity functor on $\sSets^\A$ is a left Quillen equivalence from the projective to the Reedy model structure (if $\A$ is indeed generalized Reedy), part (b) provides additional information in Proposition \ref{prop:A}, but the analogous statement for Proposition \ref{prop:B} just follows by composing the two Quillen pairs. The same remark applies to the following theorem:

\begin{theorema}
\label{thm:c}
The two Quillen pairs (left adjoints on top)
\[
\xymatrix{
\sSets^\A \ar@<.5ex>[r]^-{h_!} & \sSets/\NA \ar@<.5ex>[l]^-{h^*}\ar@<.5ex>[r]^-{r_!} & \sSets^\A \ar@<.5ex>[l]^-{r^*}
}
\]
are Quillen equivalences between the covariant and projective model structures. Furthermore, the left derived functor of $h_!$ and the right derived functor of $r^*$ are naturally equivalent.
\end{theorema}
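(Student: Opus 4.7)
The plan is built around the canonical natural map $\epsilon_F\colon r_! h_!(F) \to F$ in $\sSets^\A$ whose component at $b\in\A$ sends an $n$-simplex $(x,\,a_0\to\cdots\to a_n,\,\beta\colon a_n\to b)$ of $(r_! h_! F)(b)$ to $F(\beta\circ\alpha)(x)\in F(b)_n$, where $\alpha\colon a_0\to a_n$ is the composite of the chain. Via the adjunction $r_!\dashv r^*$, this $\epsilon$ transposes to a natural transformation $\eta\colon h_!\Rightarrow r^*$ of functors $\sSets^\A\to\sSets/\NA$, and this $\eta$ is what will realize the claimed equivalence $\mathbb{L}h_!\simeq \mathbb{R}r^*$ once its derived version is understood.

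The technical heart of the argument is to show that $\epsilon_F$ is a projective weak equivalence whenever $F$ is projectively cofibrant, which amounts to the classical statement that the homotopy colimit of a cofibrant diagram computes its strict colimit. Since $h_!$ and $r_!$ both preserve colimits, and projective weak equivalences are stable under pushouts along projective cofibrations (by the objectwise left-properness inherited from $\sSets$), the small object argument reduces the claim to free diagrams $F=a_!(L)$, and by functoriality in $L$ to the representable $F=\A(a,-)$. For this representable $F$, a direct calculation identifies $(r_! h_! F)(b)$ with the nerve $N(a\downarrow\A\downarrow b)$ of the factorization category, and $\epsilon_F$ with the map sending a factorization $a\to c\to b$ to its composite in $\A(a,b)$; since the category of factorizations of each morphism $f\colon a\to b$ admits $(a,\mathrm{id}_a,f)$ as an initial object, its nerve is contractible and $\epsilon_F$ is pointwise a weak equivalence.

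Granting this, $\mathbb{L}(r_! h_!)\simeq\mathrm{id}$ on $\mathrm{Ho}(\sSets^\A)$, and since $h_!$ is left Quillen (Proposition A) this identifies $\mathbb{L}r_!\circ\mathbb{L}h_!\simeq\mathrm{id}$. The remaining task is to establish that $(r_!, r^*)$ is itself a Quillen equivalence, for which I would verify the derived unit $X\to \mathbb{R}r^*(\mathbb{L}r_!(X))$ on covariantly cofibrant $X$ over $\NA$ by reduction to the covariant generating cofibrations, where the statement becomes a direct horn-extension computation with left fibrations. Once $\mathbb{L}r_!$ is known to be an equivalence, the identity $\mathbb{L}r_!\circ\mathbb{L}h_!\simeq\mathrm{id}$ forces $\mathbb{L}h_!$ to be its quasi-inverse, so $(h_!, h^*)$ is also a Quillen equivalence; the natural equivalence $\mathbb{L}h_!\simeq \mathbb{R}r^*$ then follows by uniqueness of adjoints, and the derived transpose of $\epsilon$ exhibits it explicitly as $\eta$. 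The main obstacle is this second half: covariant weak equivalences in $\sSets/\NA$ are defined through a localization rather than pointwise, so the $\epsilon$-based strategy does not transplant verbatim, and one needs a more delicate analysis involving left fibrations and horn extensions on the covariant side.
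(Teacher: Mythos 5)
Your first half coincides with the paper's own argument (Proposition \ref{prop:proof1}): the map $\epsilon_F = \tau$ is the same, the skeletal reduction to representables is the same, and the identification of $r_!h_!(\A(a,-))(b)$ with the nerve of the factorization category, contractible component by component, is exactly the paper's computation. That part is fine.

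The genuine gap is in the second half, and you have in effect flagged it yourself. To show $(r_!,r^*)$ is a Quillen equivalence you propose to check the derived unit $X \to \mathbf{R}r^*(\mathbf{L}r_!(X))$ by ``reduction to the covariant generating cofibrations.'' This reduction does not go through as stated: the assignment $X \mapsto \mathbf{R}r^*(\mathbf{L}r_! X)$ involves a fibrant replacement and the right adjoint $r^*$, neither of which preserves colimits, so attaching a cell to $X$ does not produce a (homotopy) pushout on the target side, and the cell-by-cell induction has no inductive step. Moreover covariant equivalences are not detected pointwise, so there is no objectwise criterion to fall back on. The paper circumvents precisely this by never forming the derived unit directly: it compares $X$ with $h_!r_!(X)$ --- a composite of two \emph{left} adjoints, hence colimit- and mono-preserving, so that skeletal induction with the gluing lemma is legitimate --- through the intermediate object $L(X) = X \times_{\NA} (\NA)^{\Delta^1}$, via the zigzag $X \to L(X) \leftarrow h_!r_!(X)$ of Proposition \ref{prop:proof2}. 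Both legs are then shown to be covariant equivalences by exhibiting explicit directed deformation retracts in the sense of Lemma \ref{lem:lefthomotopy} (after reducing to a single simplex $\Delta^n \to \NA$ and further to its initial vertex via Lemma \ref{lem:leftanod2}). Knowing that $\mathbf{L}h_!$ and $\mathbf{L}r_!$ are mutually inverse then yields both Quillen equivalences at once, rather than having to establish $(r_!,r^*)$ separately. Without this construction, or a concrete substitute for it, your argument establishes only $\mathbf{L}r_! \circ \mathbf{L}h_! \simeq \mathrm{id}$, which by itself does not rule out that $\mathbf{L}h_!$ fails to be essentially surjective, i.e.\ it proves neither equivalence.
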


We will prove this theorem by exhibiting, for cofibrant simplicial diagrams $F$, a natural weak equivalence $r_!h_!(F) \longrightarrow F$
and for objects $X \in \sSets/\NA$ a natural zigzag of weak equivalences between $h_!r_!(X)$ and $X$. This then shows that the left derived functors of $h_!$ and $r_!$ are mutually inverse and are therefore both part of Quillen equivalences. It also shows that the derived functors of $h_!$ and $r^*$ define naturally isomorphic functors on the level of homotopy categories;
\begin{equation*}
\mathbf{L}h_! \simeq \mathbf{R}r^*: \mathrm{Ho}(\sSets^\A) \longrightarrow \mathrm{Ho}(\sSets/\NA).
\end{equation*}

Finally, we will deduce several consequences of our results. The following was already proved by Dugger (see Theorem 5.2 of \cite{dugger}) in the special case where $\NA$ is contractible:

\begin{corollarya}
\label{prop:localized}
After localizing the projective model structure on $\sSets^\A$ with respect to all the maps $\A(b,-) \rightarrow \A(a,-)$ induced by morphisms $a \rightarrow b$ in $\A$, the functor $h_!$ induces a Quillen equivalence between this model structure and the Kan-Quillen model structure on the category $\sSets/\NA$. 
\end{corollarya}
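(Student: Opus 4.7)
The plan is to apply left Bousfield localization to the Quillen equivalence of Theorem~\ref{thm:c}. A direct inspection of the formula for $h_!$ shows that $h_!(\A(a,-)) \cong N(a/\A)$ as an object of $\sSets/\NA$, and that $h_!$ sends the map $\A(b,-) \to \A(a,-)$ induced by an arrow $a \to b$ to the evident projection $N(b/\A) \to N(a/\A)$ over $\NA$. Writing $S = \{\A(b,-) \to \A(a,-) \mid (a \to b) \in \A\}$, the standard fact that left Bousfield localization preserves Quillen equivalences (see Hirschhorn's book) immediately yields a Quillen equivalence
\[
L_S \sSets^\A_{\mathrm{proj}} \rightleftarrows L_{h_!(S)} \sSets/\NA_{\mathrm{cov}}.
\]

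It remains to identify the right-hand side with the Kan-Quillen model structure on $\sSets/\NA$. Both model structures have monomorphisms as cofibrations, and both are left Bousfield localizations of $\sSets/\NA_{\mathrm{cov}}$. For $L_{h_!(S)}$ this holds by construction; for the Kan-Quillen structure one observes that the forgetful functor $\sSets/\NA_{\mathrm{cov}} \to \sSets_{\mathrm{Kan}}$ is left Quillen (it preserves monomorphisms, and one checks that the generating trivial cofibrations of the covariant structure are Kan trivial cofibrations), so the identity functor $\sSets/\NA_{\mathrm{cov}} \to \sSets/\NA_{\mathrm{Kan}}$ is left Quillen as well. Since the two localizations share the same cofibrations, it suffices to show that they have the same fibrant objects.

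The fibrant objects of $L_{h_!(S)}\sSets/\NA_{\mathrm{cov}}$ are left fibrations $W \to \NA$ such that, for each $a \to b$ in $\A$, the map $\mathrm{Map}_{/\NA}(N(a/\A), W) \to \mathrm{Map}_{/\NA}(N(b/\A), W)$ is a weak equivalence. Because $\{a\} \hookrightarrow N(a/\A)$ is a covariant trivial cofibration (its target being the covariant fibrant replacement of its source), this mapping space reduces to the fiber $W_a$, and the displayed map is identified with the canonical covariant transport $W_a \to W_b$ along the edge $a \to b$. The fibrant objects of $\sSets/\NA_{\mathrm{Kan}}$ are precisely the Kan fibrations. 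The main obstacle is therefore the classical characterization (due to Joyal and Lurie) of Kan fibrations among left fibrations as those for which every edge of the base induces a weak equivalence on fibers. Granting this, the two classes of fibrant objects coincide, the two Bousfield localizations agree, and composition with the displayed Quillen equivalence above yields the asserted Quillen equivalence between the localization of $\sSets^\A_{\mathrm{proj}}$ at $S$ and the Kan-Quillen model structure on $\sSets/\NA$.
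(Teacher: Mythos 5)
Your argument is correct in outline, and its first half coincides with the paper's: both localize the Quillen equivalence of Theorem \ref{thm:c} at $S$ and at $h_!S$, using that $h_!\A(a,-)\cong N(a/\A)$. Where you diverge is in identifying $L_{h_!S}(\sSets/\NA_{\mathrm{cov}})$ with the Kan--Quillen structure. You do this on the side of \emph{fibrant objects}: the $h_!S$-local left fibrations are those whose covariant transport along every edge of $\NA$ is an equivalence (using Lemma \ref{lem:leftanod1} to identify $\mathrm{Map}_{\NA}(N(a/\A),W)\simeq W_a$), and you then invoke the Joyal--Lurie characterization of Kan fibrations among left fibrations, together with the fact that a model structure is determined by its cofibrations and fibrant objects. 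The paper instead works on the side of \emph{weak equivalences}: it first reduces the localizing set to the final-vertex inclusions $\{1\}\to\Delta^1$ over $\NA$ (Lemma \ref{lem:h!localized}) and then runs a double induction over faces of right horns to show that every $\Lambda^n_n\to\Delta^n$ over $\NA$ becomes a local equivalence, so that the localized structure is Kan--Quillen. These are dual perspectives on the same Bousfield localization, and yours is shorter and arguably more conceptual; but be aware that the theorem you cite (a left fibration with invertible transports is a Kan fibration) is precisely the nontrivial combinatorial content that the paper's induction supplies from scratch, so your proof is not self-contained in the sense the paper aims for --- if you unwound that citation you would essentially reproduce the paper's horn-filling induction. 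You should also spell out the easy converse direction (that Kan fibrations over $\NA$ are $h_!S$-local, which follows since the maps $N(b/\A)\to N(a/\A)$ are monomorphisms between contractible simplicial sets and hence anodyne), as equality of the two classes of fibrant objects needs both inclusions.
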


To make this a little more concrete, suppose for example that $F$ is a simplicial diagram on $\A$ such that $F(a)$ is a Kan complex for every object $a$ and $F(f)$ is a weak equivalence for every morphism $f$. Such a diagram is a fibrant object of $\sSets^\A$ in the localized model structure of Corollary \ref{prop:localized}. The pair $(r_!,r^*)$ is still a Quillen pair for the localized model structures of this corollary (since $\mathbf{L}r_!$ is homotopy inverse to $\mathbf{L}h_!$), so that $r^*F$ is a Kan fibration over $\NA$.  Note that the fibers of $r^*F$ over vertices of $\NA$ are precisely the corresponding values of $F$. This shows that any diagram of weak equivalences between Kan complexes can be `realized' by a Kan fibration, which is an interesting observation in its own right.

In the special case where $\A$ is a groupoid, the localizing maps of Corollary \ref{prop:localized} are natural isomorphisms; therefore, localizing with respect to them does not change the model structure. Combining this observation with Theorem \ref{thm:c} then yields the corollary below. This is a well-known fact; for example, in case $\A$ is a group $G$, this reproduces the equivalence of \cite{drordwyerkan} between the model categories of simplicial sets with $G$-action and simplicial sets over the classifying space $BG$. See also \cite{hollander,jardine}.

\begin{corollarya}
If $\A$ is a groupoid, the functor $h_!$ induces a Quillen equivalence between the projective model structure on $\sSets^\A$ and the Kan-Quillen model structure on $\sSets/\NA$.
\end{corollarya}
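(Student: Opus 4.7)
The plan is to invoke Corollary~\ref{prop:localized} directly, observing that the Bousfield localization appearing in its statement is vacuous when $\A$ is a groupoid. Explicitly, I would first note that for any morphism $f : a \to b$ in the groupoid $\A$, the induced map of representable functors $\A(b,-) \to \A(a,-)$ is a natural isomorphism with inverse the map induced by $f^{-1}$. Regarded as discrete simplicial diagrams, these are therefore isomorphisms in $\sSets^{\A}$.

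Next, I would appeal to the standard fact that a left Bousfield localization at a class of maps which are already weak equivalences, and in particular at a class of isomorphisms, does not alter the model structure: the local equivalences coincide with the original weak equivalences, the cofibrations and fibrations are unchanged, and every fibrant object of the original structure remains fibrant. Consequently, the localized projective model structure that appears in the statement of Corollary~\ref{prop:localized} is simply the projective model structure on $\sSets^{\A}$ itself. Substituting this identification back into Corollary~\ref{prop:localized} yields the desired Quillen equivalence between the projective model structure on $\sSets^{\A}$ and the Kan-Quillen model structure on $\sSets/\NA$.

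The only content requiring any thought is the identification of the localizing maps as honest isomorphisms of diagrams rather than merely objectwise weak equivalences; but since a groupoid is by definition a category in which every morphism is invertible, this is immediate from the functoriality of the Yoneda embedding. I do not anticipate any serious obstacle, as all the homotopical content has already been absorbed into Corollary~\ref{prop:localized}, and hence ultimately into Theorem~\ref{thm:c}.
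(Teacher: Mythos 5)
Your proposal is correct and is essentially the paper's own argument: the paper likewise observes that for a groupoid the localizing maps $\A(b,-) \rightarrow \A(a,-)$ are natural isomorphisms, so the localization in Corollary \ref{prop:localized} leaves the projective model structure unchanged and that corollary yields the statement directly.
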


We will use our results to demonstrate the following invariance property of the covariant model structure:
 
\begin{propositiona}
\label{prop:invariance}
If $f: X \longrightarrow Y$ is a categorical equivalence of simplicial sets (i.e. a weak equivalence in the Joyal model structure), then the adjoint pair
\[
\xymatrix{
\mathbf{sSets}/X \ar@<.5ex>[r]^{f_!} & \mathbf{sSets}/Y, \ar@<.5ex>[l]^{f^*}
}
\]
given by composing with and pulling back along $f$, is a Quillen equivalence.
\end{propositiona}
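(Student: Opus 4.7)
The plan is to combine the observation that $(f_!, f^*)$ is always a Quillen pair with a reduction via the Joyal model structure, after which Theorem~\ref{thm:c} handles the hardest case.

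For any map $f\colon X \to Y$, the pair $(f_!, f^*)$ is automatically a Quillen pair for the covariant model structures: $f_!$ is postcomposition with $f$, which preserves monomorphisms (the cofibrations in both structures), while $f^*$ is pullback, which preserves left fibrations and trivial Kan fibrations by stability under base change. To prove the Quillen equivalence when $f$ is a categorical equivalence, I would factor $f$ in the Joyal model structure as
\[
X \xrightarrow{\ i\ } Z \xrightarrow{\ p\ } Y,
\]
with $i$ a trivial cofibration and $p$ a fibration. Two-out-of-three forces $p$ to be a trivial Joyal fibration, hence a trivial Kan fibration. By two-out-of-three for Quillen equivalences applied to the resulting composition, it suffices to treat the two factors separately.

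For $p\colon Z \to Y$ a trivial Kan fibration, the unit $A \to p^* p_! A = A \times_Y Z$ on a cofibrant $A \in \sSets/Z$ composes with the projection $A \times_Y Z \to A$ to the identity on $A$. The projection is a pullback of $p$, hence itself a trivial Kan fibration and in particular a covariant equivalence; two-out-of-three then forces the unit to be a covariant equivalence, so $(p_!, p^*)$ is a Quillen equivalence.

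The remaining and substantially harder case is that of a trivial Joyal cofibration $i\colon X \hookrightarrow Z$. The class of cofibrations $i$ for which $(i_!, i^*)$ is a Quillen equivalence satisfies two-out-of-three and enjoys closure properties under cobase change, transfinite composition, and retracts, reducing the problem (via the small object argument) to a generating set of trivial cofibrations of the Joyal model structure. For the inner horn inclusions $\Lambda^n_k \hookrightarrow \Delta^n$ with $0 < k < n$ I would invoke Theorem~\ref{thm:c}: since $\Delta^n = N[n]$, the covariant model structure on $\sSets/\Delta^n$ is Quillen equivalent to the projective model structure on $\sSets^{[n]}$, and a parallel analysis of $\sSets/\Lambda^n_k$, whose non-degenerate simplices are nerves of sub-posets of $[n]$, should yield a diagrammatic description in which the comparison becomes explicit. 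The main obstacle is precisely this last step: one must verify the stated closure properties of the good class in detail and then concretely identify the covariant model structure on $\sSets/\Lambda^n_k$ through Theorem~\ref{thm:c} in order to carry out the inner-horn comparison.
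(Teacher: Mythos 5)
Your overall architecture (always a Quillen pair; factor $f$; handle the fibration and cofibration halves separately) is reasonable, and your treatment of the trivial fibration case is essentially correct (you should also note that the counit $p_!p^*W = W\times_Y Z \to W$ is a pullback of $p$, hence a covariant equivalence, but that is routine). However, the cofibration half has two genuine gaps.

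First, the trivial cofibrations of the Joyal model structure are \emph{not} generated by the inner horn inclusions: their saturation is the class of inner anodyne maps, which is strictly smaller than the class of Joyal trivial cofibrations (e.g.\ $\{0\}\to J$ is a Joyal trivial cofibration that is not inner anodyne), and no explicit generating set of Joyal trivial cofibrations is known. So your factorization $X\to Z\to Y$ does not reduce the problem to inner horns. The paper avoids this by a different decomposition: it embeds $X$ and $Y$ by \emph{inner anodyne} maps into quasicategories $X'$, $Y'$, handles the categorical equivalence $X'\to Y'$ between fibrant objects by Brown's lemma (reducing to trivial fibrations, as in your easy case), and is then left with only the inner anodyne case, where the reduction to inner horns is legitimate.

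Second, even in the inner anodyne case, the class of maps $i$ for which $(i_!,i^*)$ is a Quillen equivalence is not obviously closed under cobase change and transfinite composition; ``being a Quillen equivalence'' is not a lifting-type property, so the saturation argument you invoke does not go through as stated. The paper instead saturates a stronger, pullback-stable statement: the class $\mathcal{A}$ of maps whose pullback along \emph{any} left fibration is a covariant trivial cofibration. This class is saturated because trivial cofibrations form a saturated class and pullback preserves colimits; combined with the observation that $\mathbf{R}f^*$ is conservative for $f$ inner anodyne (bijective on vertices, and covariant equivalences between left fibrations are detected on fibers), membership of $f$ in $\mathcal{A}$ gives the Quillen equivalence. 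Finally, your treatment of the crucial case $\Lambda^n_k\hookrightarrow\Delta^n$ is only a sketch of intent (``should yield a diagrammatic description''); the actual argument replaces a left fibration $Z\to\Delta^n$ by $h_!F$ for a fibrant--cofibrant $F:[n]\to\sSets$ via Theorem~\ref{thm:c}, computes that $\Lambda^n_k\times_{\Delta^n}h_!F\to h_!F$ is a pushout of $\Lambda^n_k\times F(0)\to\Delta^n\times F(0)$ and hence inner anodyne, and transports the conclusion back to $Z$ using the comparison $h_!F\to r^*F$ and its universality (Lemma~\ref{lem:universalequiv}). As it stands, your proposal is missing the ideas needed to make both the reduction and the base case work.
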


To conclude, we will show how versions of Quillen's Theorems A and B for $\infty$-categories can be proved using our methods. Recall that for a simplicial set $B$ and $b$ a vertex of $B$, one can define a simplicial set $B/b$, which has as its $n$-simplices the $n+1$-simplices of $B$ whose final vertex is $b$, with face and degeneracy maps defined in an obvious manner. Moreover, for a map $X \rightarrow B$ one defines
\begin{equation*}
X/b := X \times_B B/b.
\end{equation*}
The following is a refinement of Quillen's Theorem A:

\begin{propositiona}
\label{prop:QuillenA}
Suppose $B$ is an $\infty$-category and we have a diagram
\[
\xymatrix{
X \ar[rr]^f \ar[dr] & & Y \ar[dl] \\
& B. &
}
\]
If, for every vertex $b$ of $B$, the map $f$ induces a weak equivalence of simplicial sets $X/b \rightarrow Y/b$ (in the Kan-Quillen model structure), then $f$ itself is a weak equivalence in the covariant model structure over $B$. 
\end{propositiona}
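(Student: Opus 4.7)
The plan is to first handle the case when $B = \NA$ for a small $1$-category $\A$ by applying Theorem \ref{thm:c}, and then to extend the result to arbitrary $\infty$-categories via the invariance property of Proposition \ref{prop:invariance}.

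For the nerve case $B = \NA$, the crucial observation is that for any $X \in \sSets/\NA$ and any object $b \in \A$, the description of the rectification functor from the introduction yields
\[
r_!(X)(b) \;=\; X \times_{\NA} N(\A/b) \;=\; X/b,
\]
and similarly for $Y$. Hence the hypothesis that $X/b \to Y/b$ is a Kan-Quillen weak equivalence for every vertex $b$ is precisely the statement that $r_!(f)\colon r_!(X) \to r_!(Y)$ is a projective weak equivalence in $\sSets^\A$. Since the cofibrations in the covariant model structure on $\sSets/\NA$ are monomorphisms, every object is cofibrant; by Theorem \ref{thm:c}, $r_!$ is the left adjoint in a Quillen equivalence, and hence reflects weak equivalences between cofibrant objects. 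Therefore $f$ is itself a covariant weak equivalence over $\NA$.

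For an arbitrary $\infty$-category $B$, I would like to reduce to the nerve case using Proposition \ref{prop:invariance}. Because the construction $X \mapsto X/b = X \times_B B/b$ is natural under base change, any categorical equivalence $p\colon B \to B'$ carries both the hypothesis and the desired conclusion back and forth along the Quillen equivalence $(p_!, p^*)$. If one could take $B' = \NA$ for some small category $\A$, the general case would follow at once from the nerve case.

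The main obstacle is that an arbitrary $\infty$-category need not be Joyal equivalent to the nerve of a $1$-category, so no such direct reduction is available. The remedy I would pursue is to test the covariant equivalence on the simplices of $B$: for each simplex $\sigma\colon \Delta^n \to B$ the pullback $\sigma^\ast f$ sits over $\Delta^n = N[n]$, a nerve, the slice hypothesis is preserved under pullback, and the nerve case shows that $\sigma^\ast f$ is a covariant equivalence over $\Delta^n$. The remaining, and genuinely technical, step is to assemble these local equivalences into a global one, using that $B$ is the colimit of its simplices together with a Reedy-type induction on the skeletal filtration of $B$ to combine Proposition \ref{prop:invariance} with the cofibrant generation of the covariant model structure.
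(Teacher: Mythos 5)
Your nerve case is correct and is exactly the first paragraph of the paper's proof: identify $X/a = r_!(X)(a)$, use that $r_!$ is a left Quillen equivalence, hence reflects weak equivalences between cofibrant objects (and every object of $\sSets/\NA$ is cofibrant). That part is fine.

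The general case, however, has a genuine gap, and the specific route you sketch does not work as stated. First, the claim that ``the slice hypothesis is preserved under pullback'' along a simplex $\sigma\colon \Delta^n \to B$ is not correct. If $b=i$ is a vertex of $\Delta^n$, one computes $(\sigma^*X)/i \cong X \times_B \Delta^n/i$, and $\Delta^n/i \to \Delta^n$ identifies with the face inclusion $\Delta^{\{0,\dots,i\}} \hookrightarrow \Delta^n$. Hence $(\sigma^*X)/i$ is $X/\beta$ for $\beta = \sigma|_{\{0,\dots,i\}}$, \emph{not} $X/\sigma(i) = X\times_B B/\sigma(i)$; there is a comparison map $\Delta^n/i \to B/\sigma(i)$ but it is far from an isomorphism. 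Bridging this gap is precisely where the hypothesis that $B$ is an $\infty$-category enters in the paper: one proves that $X/\beta \to X/d_n\beta$ is a trivial fibration (a lifting problem against $B$ along an inner anodyne inclusion), so that $X/\beta \to X/\beta_0$ is a trivial fibration and the vertex hypothesis propagates to all simplices. Your sketch omits this lemma, and without it the reduction to simplices does not even get started.

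Second, even granting that $\sigma^*f$ is a covariant equivalence over $\Delta^n$ for all simplices $\sigma$, you still need to assemble these into a global covariant equivalence over $B$. You gesture at a ``Reedy-type induction on the skeletal filtration,'' but this is exactly the hard content, and the skeleta $\mathrm{sk}_n B$ are not $\infty$-categories, so the proposition cannot be applied to them inductively; moreover, covariant equivalences are not obviously stable under the pushouts appearing in the skeletal filtration. The paper's route avoids this local-to-global difficulty entirely: it packages all the slices $X/\beta$ into a bisimplicial set $X//B$ over $cB$, observes that the hypothesis gives a levelwise weak equivalence $X//B \to Y//B$ (Corollary~\ref{cor:equivfatslice}), shows that $\mathrm{diag}$ is left Quillen from the Reedy model structure on $\mathbf{sSets}^{(\mathbf{\Delta}/B)^{\mathrm{op}}}$ to the covariant model structure on $\sSets/B$ and that every object of the source is Reedy cofibrant (Proposition~\ref{prop:diag}), and finally relates $\mathrm{diag}(X//B)$ to $X$ by the zigzag $X \to L(X) \leftarrow \mathrm{diag}(X//B)$ of Proposition~\ref{prop:zigzag}. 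You would either need to reproduce this machinery or supply a genuinely different local-to-global argument; as written, your plan leaves the main step unproved.
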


If $\beta$ is an $m$-simplex of $B$, one can define a simplicial set $X/\beta$ over $B$ in the same way one defines $X/b$ as above. Indeed, one considers instead the simplicial set $B/\beta$, whose $n$-simplices are $n+m+1$-simplices of $B$ which restrict to $\beta$ on the final segment $\{n+1, \ldots, n+m+1\}$. Our last result generalizes Quillen's Theorem B:

\begin{propositiona}
\label{prop:QuillenB}
Let $p: X \rightarrow B$ be a map of simplicial sets and suppose $B$ is an $\infty$-category. Furthermore, assume that for any 1-simplex $\beta$ of $B$, the map $X/\beta \rightarrow X/d_0\beta$ is a weak equivalence in the Kan-Quillen model structure. Then $X/b$ is a model for the homotopy fiber of $p$ over $b$ in the Kan-Quillen model structure.
\end{propositiona}

Our main Theorem C is only partially new. Indeed, at least in the special case $\A = \mathbf{\Delta}^{\mathrm{op}}$, it is a classical fact that $h_!$ and $r^*$ are two models for the homotopy colimit, and it is proved in detail in \cite{cegarraremedios} that these functors produce objects which are equivalent in the classical Kan-Quillen homotopy theory of simplicial sets. The introduction to \cite{cegarraremedios}, which uses the perhaps more familiar notation $\overline{W}$ for the functor we call $r^*$, provides an overview of occurrences of this functor in the literature and connects it to classical topics in the homotopy theory of simplicial sets and simplicial groups. \par 
Our result that the derived functors of $h_!$ and $r^*$ are naturally equivalent for the covariant model structure over $\NA$ is sharper and gives additional significance to this model structure. The fact that $r_!$ and $r^*$ constitute a Quillen equivalence is proved in a more general context by Lurie \cite{htt}; indeed, the functor we denote by $r^*$ coincides with what Lurie calls the \emph{relative nerve functor} in his Section 3.2.5. However, his proof uses rather a lot of machinery and is essentially different from ours, since it does not exploit the relation between $r^*$ and the standard model $h_!$ for the homotopy colimit. Our Proposition \ref{prop:invariance} already follows from Lurie's work (see his Remark 2.1.4.11). However, the proof we provide is more direct. Also, variants of Proposition \ref{prop:QuillenA} were already proved by Joyal \cite{joyal} and Lurie \cite{htt}, but we provide a different and concise proof, using only the results in this paper. \par 
The virtue of this note, if any, thus lies in the fact that it provides a direct and self-contained proof of the stated facts and additionally yields left Quillen functors between $\mathbf{sSets}^{\A}$ and $\mathbf{sSets}/\NA$ in both directions. 

\section{Review of several model structures}
\label{sec:modelstructures}

In this section we recall the details of the model categories involved in the main results stated above. As before, $\A$ is a fixed small category.

\subsection{The projective model structure on $\sSets^\A$}
\label{subsec:proj}
The material in this section is standard and can for example be found in \cite{hirschhorn}. The projective model structure is determined by specifying that a map $G \rightarrow F$ of simplicial diagrams on $\A$ is a fibration, respectively a weak equivalence, if and only if for each object $a$ in $\A$ the map $G(a) \rightarrow F(a)$ is a fibration, respectively a weak equivalence, with respect to the classical Kan-Quillen model structure on simplicial sets. This model structure is cofibrantly generated, with generating cofibrations and trivial cofibrations of the form
\begin{equation*}
\partial \Delta^n \times \A(a,-) \longrightarrow \Delta^n \times \A(a,-) \quad\quad\quad (n \geq 0)
\end{equation*}
and
\begin{equation*}
\Lambda_k^n \times \A(a,-) \longrightarrow \Delta^n \times \A(a,-) \quad\quad\quad (n \geq 0, 0 \leq k \leq n)
\end{equation*}
respectively. We refer to the three distinguished classes of maps in this model structure as \emph{projective (co)fibrations} and \emph{projective weak equivalences}. The projective model structure is left proper (as well as right proper) and simplicial. For simplicial diagrams $F$ and $G$ on $\A$ and a simplicial set $M$, the simplicial structure is given by
\begin{eqnarray*}
(M \otimes F)(a) & = & M \times F(a) \\
\mathrm{Map}(F,G)_n & = & \mathrm{Hom}(\Delta^n \otimes F, G)
\end{eqnarray*}
where $\mathrm{Hom}$ denotes the set of morphisms in $\sSets^\A$.

\subsection{The Reedy model structure}
\label{subsec:reedy}
If $\A$ is a (generalized) Reedy category \cite{bergermoerdijkReedy}, there is another useful model structure on $\sSets^\A$ with the same weak equivalences as the projective structure. We will write $\A^-$ and $\A^+$ for the subcategories given by the generalized Reedy structure, so that any arrow $b \rightarrow a$ in $\A$ factors as
\[
\xymatrix{
b \ar[r]^{-} &  c \ar[r]^{+} & a
}
\]
where $b \rightarrow c$ is in $\A^-$ and $c \rightarrow a$ is in $\A^+$, in a way which is unique up to unique isomorphism. We refer to \cite{bergermoerdijkReedy} for a complete list of axioms such a structure should satisfy. For an object $b$ of $\A$, let us write $\A^-(b,-)$ for the subfunctor of the representable functor $\A(b,-)$ whose value $\A^-(b,a)$ consists of those morphisms $b \rightarrow a$ which are \emph{not} in $\A^+$, or equivalently, which admit a factorization $b \rightarrow c \rightarrow a$ with $b \rightarrow c$ in $\A^-$ and \emph{not} an isomorphism. Then a map of simplicial diagrams $G \rightarrow F$ is a Reedy fibration if and only if for each $b \in \A$, it has the right lifting property with respect to the maps
\begin{equation*}
\Lambda_k^n \times \A(b,-) \cup \Delta^n \times \A^-(b,-) \longrightarrow \Delta^n \times \A(b,-) \quad\quad\quad (n \geq 0, \, 0 \leq k \leq n)
\end{equation*}
Indeed, it easily follows from the definition of a generalized Reedy category that the map
\begin{equation*}
\varinjlim_{b \rightarrow c} \A(c,-) \rightarrow \A^-(b,-)
\end{equation*}
is an isomorphism, where the colimit is over all non-isomorphisms $b \rightarrow c$ in $\A^-$. \par 
This Reedy model structure is again cofibrantly generated, left proper (as well as right proper) and simplicial. Moreover, the identity functor is a left Quillen equivalence from the projective to the Reedy model structure.

\subsection{The covariant model structure on $\sSets/\NA$}
This model structure is treated by Joyal \cite{joyal} and Lurie \cite{htt}. Let us first recall the Joyal model structure on simplicial sets; it is uniquely determined by stating that its cofibrations are the monomorphisms, while its fibrant objects are the $\infty$-categories (or quasicategories), i.e. simplicial sets having the extension property with respect to the \emph{inner} horns $\Lambda_k^n \rightarrow \Delta^n$ (for $n > 1$ and $0 < k < n$). (In particular, such inner horn inclusions are trivial cofibrations in the Joyal model structure.) The fibrations between fibrant objects are the maps having the right lifting property with respect to these inner horns, as well as with respect to the inclusion $\{0\} \rightarrow J$, where $J$ denotes the nerve of the contractible groupoid 
\[
\xymatrix{
0 \ar@{<->}[r]^\simeq & 1.
}
\]
The Joyal model structure is Cartesian, but not simplicial. Any simplicial set $B$ defines an induced Joyal model structure on the slice category $\sSets/B$. The \emph{covariant model structure} on $\sSets/B$ is the left Bousfield localization of this Joyal structure along the \emph{left horn inclusions} over $B$, i.e. maps of the form
\[
\xymatrix{
\Lambda_0^n \ar[rr]\ar[dr] & & \Delta^n \ar[dl] \\
& B & 
}
\]
for $n \geq 1$. We refer to the distinguished classes of maps in the covariant model structure as \emph{covariant (co)fibrations} and \emph{covariant weak equivalences}. The fibrant objects are the \emph{left fibrations} over $B$, i.e. those maps $X \rightarrow B$ having the right lifting property with respect to the maps $\Lambda_k^n \rightarrow \Delta^n$ for $n\geq 1$ and $0 \leq k < n$. The saturation of this set of horn inclusions is called the class of \emph{left anodyne maps}. In particular, left anodyne maps are covariant trivial cofibrations. Having the right lifting property with respect to these maps also characterizes the fibrations between fibrant objects in the covariant model structure. The weak equivalences between fibrant objects are precisely those maps that induce homotopy equivalences on fibers \cite{joyal}. \par 
The covariant model structure is left proper and simplicial (see Theorem 11.18 of \cite{joyal} or Proposition 2.1.4.8 of \cite{htt}). For a simplicial set $M$ and maps $X \rightarrow B$ and $Y \rightarrow B$, the objects
\begin{equation*}
M \otimes (X \rightarrow B) \quad\quad \text{and} \quad\quad \mathrm{Map}(X \rightarrow B, Y \rightarrow B)
\end{equation*}
are the composition $M \times X \rightarrow X \rightarrow B$ and the simplicial set with $n$-simplices the maps $\Delta^n \times X \rightarrow Y$ over $B$. We will simply write $\mathrm{Map}_B(X,Y)$ for this simplicial set. Since the covariant model structure is simplicial, a weak equivalence $M \rightarrow N$ of simplicial sets (in the Kan-Quillen model structure) gives a covariant weak equivalence $M \times X \rightarrow N \times X$ over $B$. \par 
The following basic `left homotopy' lemma will be very useful in this paper:

\begin{lemma}
\label{lem:lefthomotopy}
Consider a monomorphism $i$ of simplicial sets over $B$:
\[
\xymatrix{
X \ar[rr]^i \ar[dr] && Y \ar[dl] \\
& B. &
}
\]
Now suppose there exist a retraction $r: Y \rightarrow X$ so that $ri = \mathrm{id}_X$ and a homotopy $h: \Delta^1 \times Y \rightarrow Y$ (relative to $X$) from $ir$ to $\mathrm{id}_Y$, i.e. satisfying
\begin{eqnarray*}
h_0 = h |_{\{0\} \times Y} = ir: Y \longrightarrow Y, && \\
h_1 = h |_{\{1\} \times Y} = \mathrm{id}_Y: Y \longrightarrow Y. &&
\end{eqnarray*}
Then $i$ is a trivial cofibration in the covariant model structure over $B$. \end{lemma}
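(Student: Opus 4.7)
The plan is to show that $i$ is a covariant weak equivalence, since it is already a cofibration in the covariant model structure (cofibrations there are precisely the monomorphisms, inherited from the Joyal structure). The key observation is that the inclusion $j_0: Y \hookrightarrow \Delta^1 \times Y$ at the $\{0\}$-end is left anodyne, since $\{0\} = \Lambda_0^1 \hookrightarrow \Delta^1$ is a left horn inclusion and the class of left anodyne maps is stable under pushout-product with arbitrary monomorphisms. Viewing $\Delta^1 \times Y$ as an object over $B$ via the projection $\pi$ to $Y$ followed by the structure map $Y \to B$, we thus have that $j_0$ is a covariant trivial cofibration over $B$.

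The rest of the argument proceeds by repeated application of the 2-out-of-3 property. From $\pi \circ j_0 = \mathrm{id}_Y$ and $j_0$ being a covariant weak equivalence, 2-out-of-3 forces $\pi$ to be a covariant weak equivalence. Applying 2-out-of-3 again to $\pi \circ j_1 = \mathrm{id}_Y$, where $j_1$ is the inclusion at $\{1\}$, shows that $j_1$ is also a covariant weak equivalence. Since the homotopy $h$ satisfies $h \circ j_1 = \mathrm{id}_Y$, one more instance of 2-out-of-3 forces $h$ itself to be a covariant weak equivalence. Hence $ir = h \circ j_0$ is a composite of two covariant weak equivalences, and is therefore itself a covariant weak equivalence.

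Finally, combining that $ir$ is a covariant weak equivalence with the trivially weak equivalent $ri = \mathrm{id}_X$ and applying the 2-out-of-6 property to the sequence
\[
\xymatrix{
X \ar[r]^-{i} & Y \ar[r]^-{r} & X \ar[r]^-{i} & Y
}
\]
shows that $i$ (and $r$) are covariant weak equivalences. Being moreover a monomorphism, $i$ is therefore a trivial cofibration in the covariant model structure.

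The main non-formal input is the pushout-product property for left anodyne maps against arbitrary monomorphisms; this is standard in the theory of the covariant model structure (see Joyal's or Lurie's references cited in the paper), but is not explicitly recalled in the excerpt, so it would be the one step requiring an external citation. Everything else is purely formal model-categorical bookkeeping. As an alternative formulation, one can rephrase the argument in the covariant homotopy category: since $j_0$ and $j_1$ are both covariant weak equivalences with common retraction $\pi$, they become equal in $\mathrm{Ho}(\sSets/B)$, so $h$ exhibits $[ir] = [\mathrm{id}_Y]$; combined with $[ri] = [\mathrm{id}_X]$, this makes $[i]$ an isomorphism in the homotopy category, hence $i$ a weak equivalence.
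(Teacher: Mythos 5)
There is a genuine gap, and it is exactly the subtlety that the Remark following this lemma in the paper warns about: the retraction $r$ and the homotopy $h$ are \emph{not} assumed to be maps over $B$; only $i$ is. Your argument runs entirely inside the model category $\sSets/B$, applying two-out-of-three and two-out-of-six to the maps $h$, $ir$, and $r$. But two-out-of-three in $\sSets/B$ only applies to morphisms of $\sSets/B$, i.e.\ maps commuting with the structure maps to $B$, and none of $h$, $ir$, $r$ need be such. Concretely: with the projection structure map on $\Delta^1\times Y$, your $j_0$, $j_1$, $\pi$ are all over $B$ and your first two two-out-of-three steps are fine, but the step ``$h\circ j_1=\mathrm{id}_Y$ forces $h$ to be a covariant weak equivalence'' already fails to typecheck, since $q_Y\circ h$ need not equal $q_Y\circ\pi$. (Choosing instead the structure map $q_Y\circ h$ on $\Delta^1\times Y$ repairs $h$ and $j_1$ but breaks $j_0$, since $q_Y\circ h\circ j_0=q_X\circ r\neq q_Y$ in general.) Likewise $ir$ and $r$ are not morphisms over $B$, so the concluding two-out-of-six step is not available. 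This is not a removable technicality: in the very first application (Lemma \ref{lem:leftanod1}, $\Delta^0\to b/B$) the composite $ir$ sends everything into the fiber over $b$ and is visibly not a map over $B$.

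The paper avoids this by not trying to promote $r$ and $h$ to weak equivalences at all. Instead it verifies the left lifting property of $i$ against an arbitrary covariant fibration $p:W\to Z$ directly: one forms the square whose left edge is the left anodyne pushout-product
\[
\Delta^1\times X\cup_{\{0\}\times X}\{0\}\times Y\longrightarrow\Delta^1\times Y,
\]
with top map $f\pi_X\cup fr$ and bottom map $gh$; the square commutes using only $h_0=ir$, $pf=gi$, and the fact that $h$ is relative to $X$, and the restriction of the resulting lift to $\{1\}\times Y$ solves the original lifting problem. Here $r$ and $h$ are used only to build maps into $W$ and $Z$, never as morphisms of $\sSets/B$. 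If you want to keep a two-out-of-three style argument, you would first have to replace $r$ and $h$ by data over $B$, which is precisely what the hypotheses do not provide.
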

\begin{proof}
Consider a diagram
\[
\xymatrix{
X \ar[d]_i \ar[r]^f & W \ar[d]^p \ar[dr] & \\
Y \ar[r]_g \ar@{-->}[ur] & Z \ar[r] & B,
}
\]
in which $p$ is a fibration in the covariant model structure over $B$. We want to show that a lift as indicated by the dashed arrow exists. Now form the square
\[
\xymatrix@C=70pt{
\Delta^1 \times X \cup_{\{0\} \times X} \{0\} \times Y \ar[r]^-{f\pi_X \cup fr}\ar[d] & W \ar[d]^p \\
\Delta^1 \times Y \ar[r]_{gh} \ar@{-->}_k[ur] & Z,
}
\]
where $\pi_X: \Delta^1 \times X \rightarrow X$ is the projection. The left vertical map is the pushout-product of the left anodyne map $\{0\} \rightarrow \Delta^1$ with a cofibration of simplicial sets and hence left anodyne itself, by a result of Joyal \cite{joyal}. Therefore a lift $k$ exists in the diagram. The map $k_1 = k|_{Y \times \{1\}}$ is now a lift in the original diagram. 
\end{proof}

We will refer to a tuple $(i,r,h)$ as in the lemma as a \emph{covariant deformation retract}.

\begin{remark}
Note that the maps $r$ and $h$ of the previous lemma are just maps of simplicial sets, \emph{not} required to be compatible with the maps down to $B$. Also, the direction of the homotopy, from $ir$ to $\mathrm{id}_Y$, is essential.
\end{remark}

\begin{remark}
The previous lemma shows the importance of \emph{left homotopies}, or \emph{directed homotopies}, in the covariant model structure. In fact, more can be said: the covariant model structure is the minimal simplicial localization of the Joyal model structure on $\mathbf{sSets}/B$ which satisfies the statement of Lemma \ref{lem:lefthomotopy}. We will not need this fact and therefore omit its proof, although it is straightforward.
\end{remark}

For a vertex $b$ of a simplicial set $B$, denote by $b/B$ the simplicial set whose $n$-simplices are the $n+1$-simplices of $B$ whose initial vertex is $b$, and whose face and degeneracy maps are defined by considering the $n+1$-simplex as the join $\Delta^0 \star \Delta^n$ and applying the face and degeneracy maps of $\Delta^n$. Said differently, $b/B$ can be thought of as the fiber of 
\begin{equation*}
\lambda: \mathrm{Dec}(B) \longrightarrow^{} B_0
\end{equation*}
over $b$, where $\mathrm{Dec}$ is the d\'{e}calage functor (cf. Illusie \cite{illusie}) and $\lambda$ denotes projection onto the initial vertex. Note that $d_0$ induces a map $\mathrm{Dec}(B) \rightarrow B$, which restricts to a map $b/B \rightarrow B$. We will frequently use the following set of lemmas.

\begin{lemma}
\label{lem:leftanod1}
Let $b$ be an object of $B$. Then the diagram
\[
\xymatrix{
\Delta^0 \ar[rr]^{s_0}\ar[dr]_b & & b/B \ar[dl]^{d_0} \\
& B, &
}
\]
is part of a covariant deformation retract and in particular is a covariant trivial cofibration in $\sSets/B$.
\end{lemma}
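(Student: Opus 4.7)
My plan is to invoke Lemma~\ref{lem:lefthomotopy} by exhibiting $s_0\colon \Delta^0 \to b/B$ as part of a covariant deformation retract $(s_0, r, h)$. The retraction $r\colon b/B \to \Delta^0$ is unique, and commutativity of the given triangle over $B$ follows from the simplicial identity $d_0 s_0 = \mathrm{id}$, which identifies $d_0 s_0(b)$ with $b$. All the content therefore lies in constructing a simplicial homotopy $h\colon \Delta^1 \times b/B \to b/B$ from $s_0 \circ r$ to $\mathrm{id}_{b/B}$, relative to $\Delta^0$.

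Recall that an $n$-simplex of $b/B$ is a map $\sigma\colon [n+1] \to B$ with $\sigma(0) = b$, and an $n$-simplex of $\Delta^1$ is an order-preserving map $\alpha\colon [n] \to [1]$. Given such a pair $(\alpha,\sigma)$, I would define
\[
h(\alpha,\sigma) \;:=\; \sigma \circ \mu_\alpha,
\]
where $\mu_\alpha\colon [n+1] \to [n+1]$ is the order-preserving map sending $j$ to $0$ when $j = 0$ or $\alpha(j-1) = 0$, and to $j$ otherwise. Intuitively, $\mu_\alpha$ absorbs into the initial vertex those vertices of $\sigma$ whose $\Delta^1$-coordinate is still $0$, so $h$ is the canonical contraction of the fiber $b/B$ onto its distinguished vertex $s_0(b)$, reflecting the standard extra degeneracy carried by the d\'{e}calage.

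The remaining verifications I expect to be routine. Monotonicity of each $\mu_\alpha$ and the relation $h(\alpha,\sigma)(0) = \sigma(0) = b$ (so that $h$ lands in $b/B$) are immediate. Simplicial naturality reduces to the identity $\theta_+ \circ \mu_{\alpha\theta} = \mu_\alpha \circ \theta_+$ for every $\theta\colon [m] \to [n]$ in $\Delta$, where $\theta_+\colon [m+1]\to[n+1]$ is given by $\theta_+(0) = 0$ and $\theta_+(j) = \theta(j-1)+1$ for $j \geq 1$; this is settled by a short case analysis on whether $\alpha(\theta(j-1))$ equals $0$ or $1$. The boundary conditions hold because $\alpha \equiv 0$ forces $\mu_\alpha \equiv 0$, collapsing $\sigma$ to the totally degenerate simplex at $b$, while $\alpha \equiv 1$ forces $\mu_\alpha = \mathrm{id}$. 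Finally, relativity over $\Delta^0$ is automatic, since any totally degenerate simplex at $b$ remains totally degenerate under precomposition with $\mu_\alpha$. I do not foresee a real obstacle: once the formula for $\mu_\alpha$ is identified, everything is a direct unwinding of definitions, and Lemma~\ref{lem:lefthomotopy} then yields the claim.
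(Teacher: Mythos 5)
Your proposal is correct and is essentially the paper's own proof: the map $\mu_\alpha$, which sends $j$ to $0$ precisely when the $\Delta^1$-coordinate $\alpha(j-1)$ is still $0$, is exactly a formal encoding of the paper's homotopy that replaces the initial segment $b\to b_0\to\cdots\to b_{k-1}$ by iterated degeneracies of $b$ and keeps $d_0^k\beta$. The boundary conditions, the direction of the homotopy (from $ir$ at $\{0\}$ to $\mathrm{id}$ at $\{1\}$), and the appeal to Lemma~\ref{lem:lefthomotopy} all match the paper.
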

\begin{proof}
Denote the unique map $b/B \rightarrow \Delta^0$ by $r$. Also, for an $n$-simplex $(\alpha,\beta)$ of $\Delta^1 \times b/B$, let $k$ denote the minimal integer in $\{0, \ldots, n\}$ such that $\alpha(k) = 1$, if it exists. We may informally represent such a simplex as follows:
\[
\xymatrix@R=5pt{
b \ar[r] & b_0 \ar[r] & \cdots \ar[r] & b_{k-1} \ar[r] & b_k \ar[r] & \cdots \ar[r] & b_n \\
& 0 & \cdots & 0 & 1 & \cdots & 1.
}
\] 
We then define a homotopy $h: \Delta^1 \times b/B \rightarrow b/B$ by mapping $(\alpha,\beta)$ to the simplex represented by
\[
\xymatrix{
b \ar@{=}[r] & b \ar@{=}[r] & \cdots \ar@{=}[r] & b \ar[r] & b_k \ar[r] & \cdots \ar[r] & b_n,
}
\]
where $b \rightarrow b_k \rightarrow \cdots \rightarrow b_n$ denotes the simplex $d_0^k \beta$ of $b/B$ and the equal signs denote the result of taking $k$ iterated degeneracy maps $s_0$ in the simplicial set $B$. Then $r$ and $h$ satisfy the hypotheses of Lemma \ref{lem:lefthomotopy}.
\end{proof}

In similar fashion, one proves the following:

\begin{lemma}
\label{lem:leftanod2}
Let $\beta$ be an $n$-simplex of $B$. Then
\[
\xymatrix{
\Delta^0 \ar[rr]^0\ar[dr]_{\beta_0} & & \Delta^n \ar[dl]^\beta \\
& B &
}
\]
is part of a covariant deformation retract in $\sSets/B$.
\end{lemma}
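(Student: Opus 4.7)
The plan is to follow the template of the proof of Lemma \ref{lem:leftanod1}, exhibiting an explicit tuple $(i,r,h)$ that satisfies the hypotheses of Lemma \ref{lem:lefthomotopy}. Here $i:\Delta^0\to\Delta^n$ is the inclusion of the vertex $0$, and the retraction $r:\Delta^n\to\Delta^0$ is simply the unique map to the terminal simplicial set. Recall from the remark following Lemma \ref{lem:lefthomotopy} that $r$ and $h$ need not be compatible with the maps down to $B$, so in particular the fact that $\beta$ is a genuine $n$-simplex (rather than being degenerate at $\beta_0$) poses no obstruction.

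Next, I would define the homotopy $h:\Delta^1\times\Delta^n\to\Delta^n$ by the familiar ``collapse the initial segment'' formula. An $m$-simplex of $\Delta^1\times\Delta^n$ is a pair $(\alpha,\gamma)$ of monotone maps with $\alpha:[m]\to[1]$ and $\gamma:[m]\to[n]$. Let $k\in\{0,\ldots,m+1\}$ be the least index at which $\alpha$ takes the value $1$, with the convention $k=m+1$ if $\alpha$ is identically zero. Define
\[
h(\alpha,\gamma) = \bigl(\underbrace{0,\ldots,0}_{k\text{ entries}},\gamma(k),\gamma(k+1),\ldots,\gamma(m)\bigr),
\]
regarded as an $m$-simplex of $\Delta^n$; this is monotone because $0\leq\gamma(k)$ and $\gamma$ is already monotone on $\{k,\ldots,m\}$.

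Then I would verify the three properties required by Lemma \ref{lem:lefthomotopy}: first, compatibility with face and degeneracy operators, which is a routine case analysis keyed on whether a given operator alters the pivot index $k$ (it either shifts $k$ predictably or leaves the formula visibly unchanged); second, the boundary conditions $h_0=ir$ (the constant map at vertex $0$, corresponding to $k=m+1$) and $h_1=\mathrm{id}_{\Delta^n}$ (corresponding to $k=0$), both immediate from the definition; and third, the relative statement, namely that the restriction of $h$ along $\Delta^1\times i(\Delta^0)\hookrightarrow\Delta^1\times\Delta^n$ is constant at vertex $0$, which holds because $\gamma\equiv 0$ on such simplices. Lemma \ref{lem:lefthomotopy} then yields the conclusion.

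There is no genuine obstacle here beyond bookkeeping; it is the same argument as Lemma \ref{lem:leftanod1}, transported from $b/B$ to $\Delta^n$ with the role of the initial vertex $b$ played by the vertex $0$ of $\Delta^n$. The only subtlety worth flagging is the direction of the homotopy, which must run from $ir$ to $\mathrm{id}_{\Delta^n}$ rather than the reverse, in accordance with the remark following Lemma \ref{lem:lefthomotopy}.
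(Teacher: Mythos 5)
Your proof is correct and is essentially the paper's own argument: the paper takes the same retraction $r:\Delta^n\to\Delta^0$ and the same homotopy, which it specifies by its effect on vertices ($h(0,j)=0$, $h(1,j)=j$), whereas you write out the induced formula on general $m$-simplices. The only cosmetic difference is that the paper gets simpliciality for free by noting the map is determined by a monotone map of posets, while you verify compatibility with the simplicial operators by hand.
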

\begin{proof}
Again, let $r: \Delta^n \rightarrow \Delta^0$ be the obvious map and let $h: \Delta^1 \times \Delta^n \rightarrow \Delta^n$ be the map that is determined uniquely by the following description: $h(0,j) = 0$, whereas $h(1, j) = j$, for any $0 \leq j \leq n$.
\end{proof}

\section{The homotopy colimit functor}

In this section we will consider the adjoint pair
\[
\xymatrix{
h_!: \sSets^\A \ar@<.5ex>[r] & \sSets/\NA : h^* \ar@<.5ex>[l]
}
\]
in more detail and prove Proposition \ref{prop:A} from the introduction. Recall that for a diagram $F$, the simplicial set $h_!(F)$ is defined as follows. Let $\int_{\A}F$ be the degreewise category of elements of $F$. This is a category object in $\sSets$ with an evident projection to $\A$. Then $h_!(F)$ is the diagonal of the nerve $N(\int_{\A}F)$, which is a bisimplicial set. Thus, an $n$-simplex of $h_!(F)$ is a pair $(A,x)$, where $A = (a_0 \rightarrow \cdots \rightarrow a_n)$ is an $n$-simplex of $\NA$ and $x \in F(a_0)_n$. Note that $h_!$ is compatible with the simplicial structures, in the sense that there is a natural isomorphism $h_!(M \otimes F) \simeq M \otimes h_!(F)$ for a simplicial set $M$ and a simplicial diagram $F$. \par 
For an object $X \rightarrow \NA$ of $\sSets/\NA$, the value of the right adjoint $h^*(X)$ may be described by
\begin{equation*}
h^*(X)(b) = \mathrm{Map}_{\NA}(N(b/\A), X)
\end{equation*}
where $b$ is any object of $\A$ and $\mathrm{Map}_{\NA}$ refers to the simplicial structure specified in Section \ref{sec:modelstructures}.

\begin{lemma}
\label{lem:h!cof}
The functor $h_!: \sSets^\A \rightarrow \sSets/\NA$ preserves monomorphisms.
\end{lemma}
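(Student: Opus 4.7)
The plan is to argue directly from the explicit description of $h_!(F)$ recalled just before the lemma. Recall that monomorphisms in the functor category $\sSets^\A$ are detected objectwise and degreewise: a natural transformation $G \to F$ is mono if and only if $G(a)_n \hookrightarrow F(a)_n$ is injective for every object $a$ of $\A$ and every $n \geq 0$. So I would fix such a mono $G \to F$ and verify that the induced map of sets $h_!(G)_n \to h_!(F)_n$ is injective for each $n$.

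For this, I would unpack the description: an $n$-simplex of $h_!(G)$ is a pair $(A, y)$ with $A = (a_0 \to \cdots \to a_n)$ an $n$-simplex of $\NA$ and $y \in G(a_0)_n$, and the map to $h_!(F)$ sends this to $(A, y')$ where $y'$ is the image of $y$ in $F(a_0)_n$. Suppose $(A, y)$ and $(A', \tilde y)$ have the same image. Then $A = A'$ as simplices of $\NA$, so in particular $a_0 = a_0'$, and $y$ and $\tilde y$ become equal elements of $F(a_0)_n$. Since $G(a_0)_n \to F(a_0)_n$ is injective by hypothesis, $y = \tilde y$, and the two pairs coincide.

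There is no real obstacle here; the statement is essentially a restatement of the fact that the category of elements construction is levelwise a disjoint union indexed by $n$-simplices of $\NA$, so $h_!$ is formed from the $G(a_0)_n$ by operations (disjoint unions) that preserve injections. I would simply present the verification above as the proof.
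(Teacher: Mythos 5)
Your proposal is correct and is exactly the verification the paper has in mind: the paper's proof is simply ``clear from the explicit description of $h_!$,'' and your argument spells out that description, identifying $h_!(F)_n$ as a disjoint union over $n$-simplices $A=(a_0\to\cdots\to a_n)$ of $\NA$ of the sets $F(a_0)_n$, so that injectivity is inherited levelwise. No differences to report.
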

\begin{proof}
This is clear from the explicit description of $h_!$.
\end{proof}
 
\begin{remark}
In fact, the functor $h_!$ has a left adjoint $h^+$, defined on representables $A: \Delta^n \rightarrow \NA$, $A = (a_0 \rightarrow \cdots \rightarrow a_n)$ by
\begin{equation*}
h^+(A) = \Delta^n \times \A(a_0, - ).
\end{equation*}
We will not use this left adjoint, since it is in general not left Quillen.
\end{remark}

\begin{lemma}
\label{lem:h*fib1}
The functor $h_!$ sends projective trivial cofibrations to covariant trivial cofibrations.
\end{lemma}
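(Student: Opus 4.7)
My plan is to reduce the statement to checking that $h_!$ sends each of the generating projective trivial cofibrations
\[
\Lambda_k^n \times \A(a,-) \longrightarrow \Delta^n \times \A(a,-)
\]
to a covariant trivial cofibration. This reduction is standard: the left adjoint $h_!$ preserves all small colimits, it preserves monomorphisms (Lemma~\ref{lem:h!cof}), and the class of covariant trivial cofibrations is closed under pushout, transfinite composition, and retract. Hence $h_!$ will send the saturation of the generating set (which coincides with all projective trivial cofibrations) into the covariant trivial cofibrations as soon as it does so on the generators.

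Next I would compute $h_!$ on these generators explicitly. Using that $h_!$ is compatible with the simplicial tensoring (as noted in the section preceding this lemma), one has a natural identification
\[
h_!\bigl(\Delta^n \times \A(a,-)\bigr) \;\cong\; \Delta^n \times h_!\bigl(\A(a,-)\bigr) \;\cong\; \Delta^n \times N(a/\A),
\]
where the structural map to $\NA$ is the composition $\Delta^n \times N(a/\A) \to N(a/\A) \to \NA$ which forgets the initial vertex $a$. Under this identification, the map $h_!(\Lambda_k^n \times \A(a,-)) \to h_!(\Delta^n \times \A(a,-))$ becomes
\[
\Lambda_k^n \times N(a/\A) \longrightarrow \Delta^n \times N(a/\A),
\]
again over $\NA$ via the projection.

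Finally, this last map is a covariant trivial cofibration by the simplicial enrichment of the covariant model structure. Indeed, $\Lambda_k^n \to \Delta^n$ is a monomorphism and a weak equivalence in the Kan-Quillen model structure on $\sSets$, so the remark recalled in Section~\ref{sec:modelstructures} (that a Kan-Quillen weak equivalence $M \to N$ induces a covariant weak equivalence $M \times X \to N \times X$ over $B$) applies with $X = N(a/\A)$ and $B = \NA$, and monicity is preserved under the product. This finishes the argument.

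I do not anticipate a genuine obstacle; the only point requiring care is the identification $h_!(\Delta^n \times \A(a,-)) \cong \Delta^n \times N(a/\A)$ as an object over $\NA$ and the verification that the map of $h_!$'s of the generators really is just the exterior product with $N(a/\A)$, after which the simplicial structure on the covariant model category does the rest of the work.
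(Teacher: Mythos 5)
Your proposal is correct and follows essentially the same route as the paper: reduce to the generating projective trivial cofibrations, identify their images under $h_!$ as $\Lambda_k^n \times N(a/\A) \to \Delta^n \times N(a/\A)$ over $\NA$ using the compatibility of $h_!$ with the simplicial tensoring, and conclude via the simpliciality of the covariant model structure. The extra detail you supply (the saturation argument and the explicit identification of the structural map to $\NA$) is exactly what the paper leaves implicit.
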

\begin{proof}
Observe that $h_!$ sends the generating trivial cofibration
\begin{equation*}
\Lambda_k^n \times \A(b,-) \longrightarrow \Delta^n \times \A(b,-)
\end{equation*}
to the map
\begin{equation*}
\Lambda_k^n \times N(b/\A)  \rightarrow \Delta^n \times N(b/\A),
\end{equation*}
which is a covariant trivial cofibration, since the covariant model structure is simplicial (see Section \ref{sec:modelstructures}).
\end{proof}

\begin{lemma}
\label{lem:h*fib2}
If $\A$ is a (generalized) Reedy category, the functor $h_!$ sends trivial Reedy cofibrations to covariant trivial cofibrations.
\end{lemma}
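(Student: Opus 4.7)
The plan is to mimic the proof of Lemma \ref{lem:h*fib1}. The first observation is that the generating trivial Reedy cofibration
\[
\Lambda_k^n \times \A(b,-) \cup \Delta^n \times \A^-(b,-) \longrightarrow \Delta^n \times \A(b,-)
\]
is by construction the pushout-product of the horn inclusion $\Lambda_k^n \hookrightarrow \Delta^n$ and the inclusion of subfunctors $\A^-(b,-) \hookrightarrow \A(b,-)$, formed with respect to the simplicial tensoring $M \otimes F = M \times F$ on $\sSets^\A$.

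Since $h_!$ is a left adjoint and is compatible with the simplicial tensoring (as noted at the start of Section 3), applying $h_!$ to this pushout-product produces, in $\sSets/\NA$, the pushout-product of $\Lambda_k^n \hookrightarrow \Delta^n$ with the monomorphism $h_!(\A^-(b,-)) \hookrightarrow N(b/\A)$; the latter is a covariant cofibration by Lemma \ref{lem:h!cof}. Since the covariant model structure on $\sSets/\NA$ is simplicial (Section 2.3), its pushout-product axiom applies: because $\Lambda_k^n \hookrightarrow \Delta^n$ is a trivial cofibration in the Kan--Quillen model structure on $\sSets$ for every $n \ge 1$ and $0 \le k \le n$, its pushout-product with any covariant cofibration is a covariant trivial cofibration. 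This settles the claim for the generators, and the conclusion then extends to all trivial Reedy cofibrations because $h_!$ preserves colimits and the class of covariant trivial cofibrations is closed under pushouts, transfinite compositions, and retracts.

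No serious obstacle arises: the argument is the natural pushout-product refinement of the proof of Lemma \ref{lem:h*fib1}. The only point requiring observation is that the Reedy generating trivial cofibration has, by its very definition, the shape of a pushout-product, so that the full strength of the simplicial model structure axiom (rather than only its tensoring instance used for Lemma \ref{lem:h*fib1}) may be invoked.
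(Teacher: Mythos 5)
Your proposal is correct and follows essentially the same route as the paper: both identify the generating trivial Reedy cofibration as a pushout-product, apply $h_!$ using its compatibility with the simplicial tensoring, and invoke the pushout-product axiom of the simplicial covariant model structure, with $N^-(b/\A) := h_!\A^-(b,-)$ playing the role you give to $h_!(\A^-(b,-))$. The paper is merely terser, leaving the reduction to generators and the saturation step implicit.
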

\begin{proof}
Let $\A^-(b,-)$ be as defined in Section \ref{subsec:reedy} and let $N^-(b/\A) = h_!\A^-(b,-)$. Then we have to verify that
\begin{equation*}
\Lambda_k^n \times N(b/\A) \cup \Delta^n \times N^-(b/\A) \longrightarrow \Delta^n \times N(b/\A)
\end{equation*}
is a covariant trivial cofibration. The map $N^-(b/\A) \rightarrow N(b/\A)$ is a monomorphism and so the pushout-product above is indeed a covariant trivial cofibration, again since the covariant model structure is simplicial.
\end{proof}

\begin{proof}[Proof of Proposition \ref{prop:A}]
The proposition follows directly from Lemmas \ref{lem:h!cof}, \ref{lem:h*fib1} and \ref{lem:h*fib2}.
\end{proof}

\begin{remark}
The Quillen pair of Proposition \ref{prop:A} is \emph{simplicial}, in the sense that there is a natural isomorphism
\begin{equation*}
\mathrm{Map}_{\NA}(h_!F, X) \simeq \mathrm{Map}(F, h^*X)
\end{equation*}
for any simplicial diagram $F$ and any simplicial set $X$ over $\NA$. Indeed, this is clear from the fact that $h_!$ strictly commutes with the simplicial structure, as remarked earlier.
\end{remark}

\section{The rectification functor}
\label{sec:rectification}

In this section we will prove Proposition \ref{prop:B} from the introduction, concerning the adjoint pair
\[
\xymatrix{
r_!: \sSets/\NA \ar@<.5ex>[r] & \sSets^\A : r^*. \ar@<.5ex>[l]
}
\]
For an $n$-simplex $\alpha$ in $\NA$ of the form
\[
\xymatrix{
a_0 \ar[r]^{f_1} & \cdots \ar[r]^{f_n} & a_n
}
\]
and an object $b$ in $\A$ we have
\begin{equation*}
r_!(\alpha)(b) = N(\alpha/b).
\end{equation*}
Here the category $\alpha/b$ has as objects pairs $(i, f)$, where $0 \leq i \leq n$ and $f: a_i \rightarrow b$ is a morphism in $\A$. Morphisms in $\alpha/b$ are commutative triangles
\[
\xymatrix{
a_i \ar[rr]^{f_{ij}}
\ar[dr]_f && a_j \ar[dl]^g \\
& b &
}
\] 
where $f_{ij} = f_j \circ \cdots \circ f_{i+1}$. The category $\alpha/b$ is in fact a poset and one easily verifies that $N(\alpha/b)$ is weakly equivalent to the (discrete) set $\A(a_0,b)$.  \par 
This description of $r_!$ also yields an explicit description of $r^*$ for any simplicial diagram $F$ on $\A$. Indeed, the $n$-simplices $x \in r^*(F)_\alpha$ over an $n$-simplex $\alpha$ in $\NA$ as above are families
\begin{equation*}
x = (x_u)_u, \quad u: \Delta^k \rightarrow \Delta^n,
\end{equation*}
where $x_u \in F(a_{u(k)})$ is a $k$-simplex, and these are compatible in the sense that for each commuting triangle on the left, the square on the right also commutes:
\[
\xymatrix{
\Delta^k\ar[r]^u\ar[d]_v & \Delta^n && \Delta^k \ar[d]_v\ar[r]^-{x_u} & F(a_{u(k)}) \ar[d]^{F(f_v)} \\
\Delta^{k'} \ar[ur]_{u'} &          && \Delta^{k'} \ar[r]^-{x_{u'}} & F(a_{u'(k')}).
}
\]
Here $f_v = f_{u(k)u'(k')}: a_{u(k)} \rightarrow a_{u'(k')}$ is the appropriate composition of $f_i$'s. So, said more informally, an $n$-simplex $x$ of $r^*(F)$ over a given $n$-simplex $\alpha$ of $\NA$ is an $n$-simplex $x_{\mathrm{id}} \in F(a_n)_n$ together with a compatible family of liftings of the faces of $x_{\mathrm{id}}$ to the appropriate $F(a_i)$, where $i$ is the final vertex of the face under consideration. In fact, a slightly smaller amount of data suffices to describe an $n$-simplex $x \in r^*(F)_\alpha$. Indeed, such an $x$ is completely determined by a sequence of simplices $x_i \in F(a_i)_i$, for $0 \leq i \leq n$, such that $f_i(x_{i-1}) = d_i x_i$.

We will prove Proposition \ref{prop:B} by the following two lemmas.

\begin{lemma}
\label{lem:r*fib1}
The functor $r^*: \sSets^\A \rightarrow \sSets/\NA$ preserves trivial fibrations.
\end{lemma}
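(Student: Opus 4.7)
The plan is to reduce the lifting property for $r^*$ to pointwise lifting against the trivial Kan fibrations $p_a: G(a) \to F(a)$, using the explicit description of $r^*$ recalled in this section. First, since the covariant model structure on $\sSets/\NA$ arises as a left Bousfield localization of the Joyal model structure, its cofibrations coincide with the monomorphisms; hence a map in $\sSets/\NA$ is a covariant trivial fibration if and only if it has the right lifting property with respect to every boundary inclusion $\partial\Delta^n \hookrightarrow \Delta^n$ over $\NA$ (for $n \geq 0$).

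Given a projective trivial fibration $p: G \to F$ and a lifting problem
\[
\xymatrix{
\partial\Delta^n \ar[r]\ar[d] & r^*G \ar[d]^{r^*p} \\
\Delta^n \ar[r] & r^*F
}
\]
over $\NA$, I would unpack the bottom map as an $n$-simplex $\alpha = (a_0 \xrightarrow{f_1}\cdots\xrightarrow{f_n}a_n)$ of $\NA$ together with simplices $y_i \in F(a_i)_i$ satisfying $f_i(y_{i-1}) = d_iy_i$. Inspection of the explicit description of $r^*$ then shows that the top map, restricted to $\partial\Delta^n$, prescribes the data $x_i \in G(a_i)_i$ for $0 \leq i \leq n-1$ (these are read off from the face $d_n$ together with its subfaces), together with prescribed $(n-1)$-faces $d_kx_n \in G(a_n)_{n-1}$ for $0 \leq k < n$, each lifting the corresponding face of $y_n$. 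The only datum not already determined is a top simplex $x_n \in G(a_n)_n$ with $p_n(x_n) = y_n$, with the prescribed faces $d_kx_n$ for $k < n$, and with $d_nx_n = f_n(x_{n-1})$ (the latter equality forced by the compatibility relation defining $r^*G$).

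Producing such an $x_n$ is itself a lifting problem for the trivial Kan fibration $p_n: G(a_n) \to F(a_n)$ against $\partial\Delta^n \hookrightarrow \Delta^n$, so it suffices to verify that the prescribed boundary data for $x_n$ is consistent (the face-of-face identities hold) and lies over $\partial y_n$. The latter is immediate from commutativity of the original square, while the former combines the simplicial identities with the face-of-face compatibilities of the map $\partial\Delta^n \to r^*G$; the only slightly subtle check is the agreement $d_{n-1}(d_kx_n) = f_n(d_kx_{n-1})$ for $k < n$, which follows by rewriting $d_{n-1}d_kx_n = d_kd_nx_n$ via a simplicial identity and then invoking the prescribed relation $d_nx_n = f_n(x_{n-1})$. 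Once this consistency is in hand, a single application of the trivial Kan fibration property of $p_n$ completes the lift. The substantive content of the proof is exactly this reduction to a pointwise problem; the main obstacle is the bookkeeping involved in tracking which parts of $x$ are prescribed by the boundary map and verifying that these data glue, but this is routine once the description of $r^*$ is in front of one.
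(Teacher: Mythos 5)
Your proof is correct and takes essentially the same approach as the paper's: both reduce the lifting problem for $r^*p$ against a boundary inclusion over $\NA$ to a single pointwise lifting problem for $p: G(a_n) \to F(a_n)$ against $\partial\Delta^n \hookrightarrow \Delta^n$ at the final vertex, using the explicit description of simplices of $r^*(G)$. The paper simply remarks that the other data a lift would a priori require is already supplied by the restriction to $\partial\Delta^n$, while you spell out the bookkeeping (which faces of $x_n$ are prescribed and why they are consistent) in fuller detail.
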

\begin{proof}
Let $G \rightarrow F$ be a map in $\sSets^\A$. Given $\alpha = (a_0 \rightarrow \cdots \rightarrow a_n)$, an $n$-simplex in $\NA$ as above, one uses the explicit description of $r^*$ just given to check that a diagonal lifting in a square as on the left corresponds to a diagonal lifting on the right:
\[
\xymatrix{
\partial\Delta^n \ar[r]\ar[d] & r^*(G) \ar[d] && \partial\Delta^n \ar[d]\ar[r] & G(a_n)\ar[d] \\
\Delta^n\ar[r]\ar[dr]_\alpha & r^*(F) \ar[d] && \Delta^n \ar[r] & F(a_n) \\
& \NA && &
}
\]
Indeed, a priori a diagonal lifting on the left would also involve liftings of the various faces of $\Delta^n$ into the appropriate $G(a_i)$'s, but since all these faces are contained in $\partial\Delta^n$, such lifts are already provided by the top horizontal map. From this the lemma is clear.
\end{proof}

\begin{lemma}
\label{lem:r*fib2}
The functor $r^*: \sSets^\A \rightarrow \sSets/\NA$ sends projective fibrations to left fibrations.
\end{lemma}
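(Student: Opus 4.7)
The plan is to verify that for any projective fibration $p \colon G \to F$ in $\sSets^\A$, the induced map $r^*(p) \colon r^*(G) \to r^*(F)$ has the right lifting property against every left horn inclusion $\Lambda^n_k \hookrightarrow \Delta^n$ with $n \geq 1$ and $0 \leq k < n$. Consider such a lifting problem; its composite to $\NA$ determines an $n$-simplex $\alpha = (a_0 \to \cdots \to a_n)$ of $\NA$. Using the explicit description of $r^*$ recorded above, the map $\Delta^n \to r^*(F)$ is encoded by a sequence $(y_0, \ldots, y_n)$ with $y_i \in F(a_i)_i$ and $f_i(y_{i-1}) = d_i y_i$, while the map $\Lambda^n_k \to r^*(G)$ is encoded by a mutually compatible family of lifts $z^{(j)}$, one for each $j \in \{0, \ldots, n\} \setminus \{k\}$, of the $j$-th face of $(y_0, \ldots, y_n)$.

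The key observation is that since $k < n$, the lift $z^{(n)}$ over the face $d_n \alpha = (a_0 \to \cdots \to a_{n-1})$ is available among the given data. I would therefore set $x_i := z_i^{(n)}$ for $0 \leq i \leq n-1$; this supplies consistent lifts with $p(x_i) = y_i$ and the staircase compatibility $d_i x_i = f_i x_{i-1}$. It then remains to construct a single simplex $x_n \in G(a_n)_n$ satisfying $p(x_n) = y_n$, together with prescribed faces $d_n x_n = f_n(x_{n-1})$ and $d_j x_n = z_{n-1}^{(j)}$ for each $j$ with $0 \leq j < n$ and $j \neq k$.

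The main technical step is to check that these prescribed $(n-1)$-simplices satisfy the simplicial identities $d_i d_j = d_{j-1} d_i$ for $i < j$, and hence assemble into a bona fide $\Lambda^n_k$-horn in $G(a_n)$ lying over $y_n \colon \Delta^n \to F(a_n)$. This is a routine bookkeeping verification combining (i) the mutual compatibility of the family $\{z^{(j)}\}_{j \neq k}$, which expresses that these lifts glue to a single map $\Lambda^n_k \to r^*(G)$; (ii) the internal relations $f_i z_{i-1}^{(j)} = d_i z_i^{(j)}$ within each $z^{(j)}$; and (iii) the naturality of the morphisms $f_i \colon a_{i-1} \to a_i$. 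Once this is in place, the lift $x_n$ is produced by invoking the Kan fibration property of $p_{a_n} \colon G(a_n) \to F(a_n)$ (which holds because $p$ is a projective fibration), and the combined datum $(x_0, \ldots, x_n)$ defines the required diagonal $\Delta^n \to r^*(G)$. The principal obstacle is just this compatibility check; no new idea is required beyond the one already exploited in Lemma \ref{lem:r*fib1}, namely concentrating the lifting problem at the final object $a_n$.
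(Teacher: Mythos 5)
Your proof is correct and takes essentially the same route as the paper's: both reduce the lifting problem for $r^*(p)$ against $\Lambda^n_k \rightarrow \Delta^n$ to a single horn-filling problem for the Kan fibration $G(a_n) \rightarrow F(a_n)$ over $y_n$, the key point being that for $k<n$ the face $d_n\Delta^n$ lies in $\Lambda^n_k$, so all the components landing in $G(a_i)$ for $i<n$ are already supplied by the horn. The paper states this more tersely (deferring to the proof of Lemma \ref{lem:r*fib1}) and likewise leaves the simplicial-identity bookkeeping implicit, so your level of detail is, if anything, greater.
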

\begin{proof}
The proof is identical to that of Lemma \ref{lem:r*fib1}, now using a correspondence between lifts in the same diagrams, but with $\partial\Delta^n$ replaced by $\Lambda_k^n$ for $0 \leq k < n$. Here we use that for $k < n$, all the faces that make up $\Lambda_k^n$ contain the final vertex $n$.
\end{proof}

\begin{remark}
Notice that the preceding proof does not work for the horns $\Lambda_n^n$. Indeed, already for $n=1$ and $\alpha: a_0 \rightarrow a_1$, a commutative square on the left corresponds to a vertex $y_1 \in G(a_1)$, together with a 1-simplex $x \in F(a_1)$ and a lift of $x_0 = d_1x$ to a vertex $\tilde x_0$ in $F(a_0)$, with $y_1$ mapped to $x_1 = d_0x$ by $G(a_1) \rightarrow F(a_1)$. A lifting $\Delta^1 \rightarrow r^*(G)$ would consist of a lift $y \in G(a_1)_1$ of $x$ (which can be found if $G(a_1) \rightarrow F(a_1)$ is a Kan fibration), together with a vertex $\tilde y_0$ in $G(a_0)$ over $y_0 \in G(a_1)_0$ lifting $\tilde x_0 \in F(a_0)_0$. In general, there is no way to obtain such a vertex $\tilde y_0$ from the data of a lift in the square on the right.
\end{remark}

\begin{proof}[Proof of Proposition \ref{prop:B}]
The proof follows from Lemmas \ref{lem:r*fib1} and \ref{lem:r*fib2}. Indeed, the first lemma ensures that $r_!$ preserves cofibrations. Using that the fibrant objects in the covariant model structure are the left fibrations over $\NA$ and the fibrations between such fibrant objects are also precisely maps that are left fibrations, the second lemma shows that $r^*$ preserves fibrant objects and fibrations between such. Since in any model category a cofibration is a weak equivalence if and only if it has the left lifting property with respect to fibrations between fibrant objects, this ensures that $r_!$ preserves trivial cofibrations.
\end{proof}

\begin{remark}
The pair $(r_!,r^*)$ is also simplicial. Indeed, it is immediate from our definition of $r_!$ that that for a simplicial set $X$ over $\NA$ and any $n \geq 0$, there is a natural isomorphism
\begin{equation*}
r_!(\Delta^n \otimes X) \simeq \Delta^n \otimes r_!(X)
\end{equation*}
and hence by adjunction for any simplicial diagram $F$ on $\A$ a natural isomorphism
\begin{equation*}
\mathrm{Map}_{\NA}(X, r^*F) \simeq \mathrm{Map}(r_!X, F).
\end{equation*}
\end{remark}

\section{Proof of the equivalence}
\label{sec:proof}

In this section we prove Theorem \ref{thm:c} stated in  the introduction. The proof will be split into two propositions.

\begin{proposition}
\label{prop:proof1}
For any simplicial diagram $F$, there exists a natural map $\tau: r_!h_!(F) \rightarrow F$, which is a weak equivalence whenever $F$ is projectively cofibrant.
\end{proposition}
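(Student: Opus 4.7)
The plan is to write $\tau$ down explicitly, verify it is a weak equivalence on the representable diagrams $\A(a,-)$ by direct inspection, and then extend to every projectively cofibrant $F$ by a standard cellular induction.

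For the construction of $\tau$: at an object $b \in \A$, an $n$-simplex of $r_!h_!(F)(b)$ is a triple $((x,A),\beta)$ consisting of an $n$-simplex $A = (a_0 \xrightarrow{f_1} \cdots \xrightarrow{f_n} a_n)$ of $\NA$, an element $x \in F(a_0)_n$, and a morphism $\beta: a_n \to b$. Define
\[
\tau_F\bigl((x,A),\beta\bigr) \;=\; F(\beta \circ f_n \circ \cdots \circ f_1)(x) \;\in\; F(b)_n.
\]
Compatibility with face and degeneracy maps is a brief calculation; the only non-trivial case is $d_0$, where the pre-composition with $f_1$ that occurs inside $h_!(F)$ matches the post-composition on the $\A$-side. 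Naturality of $\tau$ in $F$ and in $b$ is then immediate from the definition.

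For the representable case $F = \A(a,-)$, I identify $h_!(\A(a,-))$ with $N(a/\A)$ and use the pullback description of $r_!$ to identify $r_!h_!(\A(a,-))(b)$ with the nerve of the double comma category $a/\A/b$ of factorizations $a \to c \to b$; under this identification, $\tau$ becomes the ``composition'' functor into the discrete simplicial set $\A(a,b)$. For each $\gamma \in \A(a,b)$, the fiber is the nerve of the full subcategory $(a/\A/b)_\gamma$ of factorizations of $\gamma$, which has $(a, \mathrm{id}_a, \gamma)$ as an initial object and is therefore contractible. Hence $\tau_{\A(a,-)}$ is a projective weak equivalence, and since both $h_!$ and $r_!$ commute with simplicial tensors, the equality $\tau_{M \otimes \A(a,-)} = \mathrm{id}_M \otimes \tau_{\A(a,-)}$ shows the same conclusion for every diagram of the form $M \otimes \A(a,-)$.

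To conclude, observe that $r_!h_!$ preserves colimits (as a composite of left adjoints) and projective cofibrations (as a composite of left Quillen functors, by Propositions \ref{prop:A} and \ref{prop:B}). Combined with left properness of the projective model structure, the cube lemma together with transfinite induction shows that the class $\mathcal{C}$ of diagrams $F$ for which $\tau_F$ is a projective weak equivalence is stable under pushouts along generating projective cofibrations with source in $\mathcal{C}$, under transfinite composition of such pushouts, and under retracts. The sources and targets of the generating projective cofibrations are precisely the diagrams $\partial\Delta^n \times \A(a,-)$ and $\Delta^n \times \A(a,-)$, both of which lie in $\mathcal{C}$ by the previous paragraph, so $\mathcal{C}$ contains every projectively cofibrant diagram. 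The only step with real content is the representable case; all other closure properties are formal consequences of left-properness and the cellular presentation of cofibrant objects.
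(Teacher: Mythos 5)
Your proposal is correct and follows essentially the same route as the paper: the same explicit formula for $\tau$, reduction to representables by cellular induction, and identification of $\tau$ on $\A(a,-)$ with the nerve of the composition functor from the category of factorizations $a \to c \to b$ to the discrete set $\A(a,b)$, which is an equivalence because the fibers have initial objects. The only (cosmetic) difference is that you handle $M \otimes \A(a,-)$ via compatibility of $h_!$ and $r_!$ with the simplicial tensoring, whereas the paper reduces $\Delta^n \times \A(a,-)$ to $\Delta^0 \times \A(a,-)$ by a two-out-of-three argument with vertex inclusions; both are valid.
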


\begin{proposition}
\label{prop:proof2}
For any simplicial set $X \rightarrow \NA$ over the nerve of $\A$, there exists a natural zigzag
\begin{equation*}
X \longrightarrow L(X) \longleftarrow h_!r_!(X)
\end{equation*}
of covariant weak equivalences over $\NA$.
\end{proposition}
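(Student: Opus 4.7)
The plan is to introduce a natural path-object $L(X)$ over $\NA$ into which both $X$ and $h_!r_!(X)$ include as covariant deformation retracts. Concretely, I would set $L(X) := X \times_{\NA,e_0} \NA^{\Delta^1}$, where $\NA^{\Delta^1}$ is the internal-hom simplicial set, $e_0 : \NA^{\Delta^1} \to \NA$ is evaluation at $0 \in \Delta^1$, and the structure map $L(X) \to \NA$ is evaluation at $1$ on the second factor. An $n$-simplex of $L(X)$ is thus a pair $(x,\omega)$ with $x \in X_n$ and $\omega : \Delta^n \times \Delta^1 \to \NA$ such that $\omega|_{\Delta^n \times \{0\}} = \pi(x)$; viewing $\omega$ as a functor $[n]\times[1] \to \A$, it records a commutative ``ladder'' from $\pi(x) = (c_0\to\cdots\to c_n)$ to the $n$-simplex $A := \omega|_{\Delta^n \times \{1\}}$ of $\NA$. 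Define $X \to L(X)$ by $x \mapsto (x,\pi(x)\circ\mathrm{pr}_1)$ (the constant ladder), and $h_!r_!(X) \to L(X)$ by sending $(A,x,\beta)$ to $(x,\omega)$ where $\omega$ is the functor with $(j,0) \mapsto c_j$, $(j,1) \mapsto a_j$, and cross-morphism $c_j \to a_{j'}$ (for $j \le j'$) the composition $c_j \to c_n \xrightarrow{\beta} a_0 \to a_{j'}$.

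For $X \to L(X)$ I would apply Lemma \ref{lem:lefthomotopy}: the projection $r : L(X) \to X$ is an obvious retraction in $\sSets$, and a homotopy $h : \Delta^1 \times L(X) \to L(X)$ with $h_0 = ir$ and $h_1 = \mathrm{id}$ is given by $h(t,x,\omega) = (x,\omega_t)$, where $\omega_t(j,i) := \omega(j,\min(t,i))$. The order-preservation of $\min : [1]\times[1] \to [1]$ ensures $\omega_t$ is a well-defined functor, and $\omega_t|_{\Delta^n \times \{0\}} = \pi(x)$ is preserved throughout, so that $\omega_t \in L(X)$.

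The main step is to show $h_!r_!(X) \to L(X)$ is a covariant trivial cofibration. This map is manifestly a monomorphism, and I would prove it is left anodyne by producing an explicit cellular filtration of $L(X)$ starting from $h_!r_!(X)$, each cell attached along a pushout of a left horn inclusion $\Lambda_k^m \to \Delta^m$ with $0 \le k < m$. The cells to be added correspond to the ``non-rectifiable'' ladders --- those diagrams $\pi(x) \Rightarrow A$ whose vertical morphisms do not all factor through a common corner morphism $c_n \to a_0$ --- filtered by simplex dimension and a measure of non-rectifiability. In the toy case $\A = [1]$ and $X = \NA = \Delta^1$, a direct check yields $L(X) = \Delta^2$, with $X$ identified with the face $\Delta^{\{0,2\}}$ (left anodyne via $\Delta^{\{0,2\}} \subset \Lambda_0^2 \subset \Delta^2$) and $h_!r_!(X)$ identified with the inner horn $\Lambda_1^2$, both patently left anodyne inclusions.

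The chief obstacle is exactly this last combinatorial step: producing a left-anodyne cellular decomposition of $L(X)$ relative to $h_!r_!(X)$ in general, where the organization of non-rectifiable ladders depends intricately on $\A$. One expects to leverage Joyal's pushout-product result that left horn inclusions against cofibrations remain left anodyne, to propagate the property through dimensions and thereby identify, for each non-rectifiable cell, the horn along which it attaches.
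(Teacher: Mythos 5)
Your setup coincides with the paper's: the same path object $L(X)=X\times_{\NA,\mathrm{ev}_0}\NA^{\Delta^1}$, the same constant-ladder inclusion $\iota$, and the same comparison map $\gamma: h_!r_!(X)\rightarrow L(X)$ (the paper's $\gamma$ is defined by pulling back along $g:\Delta^1\times\Delta^n\rightarrow\Delta^{2n+1}$, $g(a,b)=(n+1)a+b$, which produces exactly your cross-morphisms $c_j\rightarrow c_n\rightarrow a_0\rightarrow a_{j'}$). Your treatment of $\iota$ via Lemma \ref{lem:lefthomotopy} is complete and correct, including the direction of the homotopy.

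The gap is the second leg. You propose to show that $\gamma$ is \emph{left anodyne} by a cellular filtration of $L(X)$ relative to $h_!r_!(X)$ indexed by ``non-rectifiable ladders,'' but you do not construct this filtration, and you yourself flag it as the chief obstacle. Two problems: first, the statement you are aiming for is strictly stronger than what is needed --- $\gamma$ need only be a covariant weak equivalence (equivalently, being a monomorphism, a covariant trivial cofibration), and covariant trivial cofibrations are not in general left anodyne, so there is no a priori guarantee that such a horn-by-horn decomposition exists; your toy case $\A=[1]$ does not give evidence for the general combinatorics, where a single attached $n$-simplex of $X$ contributes simplices of $L(X)$ in all dimensions governed by the ladders in $\A$. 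Second, even granting feasibility, the filtration is the entire content of the claim and is absent. The paper avoids this combinatorial problem altogether: since $\mathrm{diag}(-/\!/B)$ and $L$ preserve colimits and monomorphisms, a skeletal induction on $X$ (with the cube lemma for homotopy pushouts) reduces the claim to $X=\Delta^n\rightarrow B$; then Lemma \ref{lem:leftanod2} exhibits the initial vertex $b_0\rightarrow\Delta^n$ as a covariant deformation retract, and Lemma \ref{lem:DLdefretract} (both functors commute with $\Delta^1\times-$, hence preserve covariant deformation retracts) reduces further to $X=b_0=\Delta^0$; finally one writes down an explicit covariant deformation retract exhibiting $b_0\rightarrow\mathrm{diag}(b_0/\!/B)$ as a trivial cofibration (the ``slide everything to $b_0$'' homotopy of Lemma \ref{lem:leftanod1}) and concludes that $\gamma$ is an equivalence by two-out-of-three in the triangle $b_0\rightarrow\mathrm{diag}(b_0/\!/B)\rightarrow L(b_0)$. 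If you want to salvage your approach, you should either carry out the filtration in full or, more economically, replace it by this reduction-to-a-vertex argument, which only ever requires the deformation-retract criterion of Lemma \ref{lem:lefthomotopy} that you have already used for $\iota$.
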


\begin{proof}[Proof of Proposition \ref{prop:proof1}]
Recall that for an object $\pi: X \rightarrow \NA$ of $\sSets / \NA$, the value of the diagram $r_!(X)$ at an object $b$ of $\A$ is the simplicial set described by
\begin{equation*}
r_!(X)(b)_n = \{\, (x, \beta) \quad | \quad x \in X_n, \quad \beta: \pi(x_n) \rightarrow b \, \}, 
\end{equation*}
where $x_n$ denotes the final vertex of the $n$-simplex $x$. Thus, for a simplicial diagram $F$ on $\A$, the $n$-simplices of $r_!h_!(F)(b)$ are of the form
\[
\xymatrix@C=10pt{
(x \in F(a_0)_n,\, a_0 \ar[r]^-{\alpha_1} & \cdots \ar[r]^{\alpha_n} & a_n \ar[r]^{\beta} & b).
}
\]
The natural map $\tau: r_!h_!(F) \rightarrow F$ is defined by sending such a simplex to $F(\beta \alpha_n \cdots \alpha_1)(x)$, an $n$-simplex of $F(b)$. \par 
In order to prove that this map is a weak equivalence for cofibrant objects $F$, one can argue by the usual skeletal induction on $F$ and conclude that it suffices to prove that for each representable object $\Delta^n \times \A(a,-)$, the map
\begin{equation*}
\tau = \tau_{n,a}: r_!h_!(\Delta^n \times \A(a,-)) \longrightarrow \Delta^n \times \A(a,-)
\end{equation*}
is a projective weak equivalence. Now consider the square
\[
\xymatrix{
r_!h_!(\Delta^n \times \A(a,-)) \ar[r]^-{\tau_{n,a}} & \Delta^n \times \A(a,-) \\
r_!h_!(\Delta^0 \times \A(a,-)) \ar[u]\ar[r]_-{\tau_{0,a}} & \Delta^0 \times \A(a,-) \ar[u]
}
\]
where the vertical maps are given by the inclusion of the zero'th vertex and its image under $r_!h_!$. The right vertical map is a projective trivial cofibration and since $r_!$ and $h_!$ are both left Quillen, the left vertical map is so as well. So to prove that $\tau_{n,a}$ is a weak equivalence, it suffices to prove that $\tau_{0,a}$ is. \par 
But $r_!h_!(\Delta^0 \times \A(a,-))(b)$ is the nerve of the category $\A '(a,b)$ whose objects are factorizations $a \rightarrow c \rightarrow b$ of arrows in $\A(a,b)$, while its arrows are commutative diagrams
\[
\xymatrix@R=10pt@C=20pt{
& c \ar[dr]\ar[dd] & \\
a \ar[ur]\ar[dr] & & b. \\
& c' \ar[ur] &
}
\] 
The map $\tau$ is then the nerve of the composition functor $\A '(a,b) \rightarrow \A(a,b)$, where the latter set is interpreted as a discrete category. This map is a weak equivalence, because each connected component of $\A '(a,b)$ has an initial as well as a terminal object ($a = a \rightarrow b$ and $a \rightarrow b = b$ respectively). This proves the proposition.
\end{proof}

Before proving Proposition \ref{prop:proof2}, let us discuss some of the relevant functors. Consider a map of simplicial sets $\pi: X \rightarrow \NA$. Recall that $h_!r_!(X)$ is the simplicial set over $\NA$ having as $n$-simplices over $a_0 \rightarrow \cdots \rightarrow a_n$ the pairs
\begin{equation*}
(x, \, \pi(x_n) \rightarrow a_0 \rightarrow \cdots \rightarrow a_n)
\end{equation*}
where $x \in X_n$ and $x_n$ is the final vertex of $x$ (as before). The projection $h_!r_!(X) \rightarrow \NA$ maps such a pair to $a_0 \rightarrow \cdots \rightarrow a_n$. This functor can be defined over any simplicial set $B$, not necessarily of the form $\NA$. Indeed, for a map of simplicial sets $\pi: X \rightarrow B$ and an $n$-simplex $\beta$ of $B$, consider the simplicial set $X/\beta$: its $m$-simplices are pairs $(x, \xi)$, where $x$ is an $m$-simplex of $X$ and $\xi$ is an $(m+n+1)$-simplex of $B$, such that
\begin{equation*}
\pi(x) = \xi|_{\{0, \ldots, m\}} \quad \text{and} \quad \beta = \xi|_{\{m+1, \ldots, m+n+1\}}.
\end{equation*} 
Heuristically, we might denote such a simplex by 
\begin{equation*}
\pi(x_0) \rightarrow \cdots \rightarrow \pi(x_m) \rightarrow b_0 \rightarrow \cdots \rightarrow b_n.
\end{equation*}
As we let the simplex $\beta$ vary we obtain a bisimplicial set, which we denote $X//B$. Projecting to the simplex $\beta$ gives a map
\begin{equation*}
\mathrm{diag}(X//B) \longrightarrow B.
\end{equation*}
Observe that if $B$ is $\NA$, then there is a natural isomorphism
\begin{equation*}
h_!r_!(X) \simeq \mathrm{diag}(X // \NA).
\end{equation*}
This more general description of $h_!r_!$ will be of use to us in Section \ref{sec:QuillenA}. Observe that the functor $\mathrm{diag}(- //B)$ preserves colimits and monomorphisms. In fact it is left Quillen, as will follow from our proofs.

We now define the zigzag of the proposition. For a simplicial set $B$, consider the map $B^{\Delta^1} \rightarrow B$ given by evaluating at 0 and define a functor $L: \mathbf{sSets}/B \rightarrow \mathbf{sSets}/B$ by
\begin{equation*}
L(X) = X \times_{B} B^{\Delta^1}.
\end{equation*}
The map $L(X) \rightarrow B$ is determined by the evaluation $B^{\Delta^1} \rightarrow B$ at 1. In the special case where $B = \NA$, the $n$-simplices of $L(X)$ can be described as pairs $(x, \lambda)$, where $x$ is an $n$-simplex of $X$ and $\lambda$ is a commutative diagram in $\A$ of the form
\[
\xymatrix{
\pi(x_0) \ar[r]\ar[d]_{\lambda_0} & \pi(x_1) \ar[r]\ar[d]_{\lambda_1} & \cdots  \ar[r] & \pi(x_n) \ar[d]_{\lambda_n} \\
a_0 \ar[r] & a_1 \ar[r] & \cdots \ar[r] & a_n
}
\]
and the map to $\NA$ is given by taking the bottom row. We will refer to such diagrams as \emph{ladders}. Clearly, the functor $L$ preserves colimits and monomorphisms. We will see later that it is in fact left Quillen.

For a map $\pi: X \rightarrow B$, let us define natural maps
\[
\xymatrix{
X \ar[r]^-{\iota} & L(X) & \mathrm{diag}(X//B) \ar[l]_-{\gamma}
}
\]
over $B$. The map $\iota$ is the `constant path' map, which sends an $n$-simplex $x$ to the pair $(x, s_0\pi(x))$, with $s_0$ denoting the pullback along the degeneracy map $\Delta^1 \rightarrow \Delta^0$. Now consider the map $g: \Delta^1 \times \Delta^n \longrightarrow \Delta^{2n+1}$ that is determined by the requirement that $g(a,b) = (n+1)a + b$, for $a = 0,1$ and $0 \leq b \leq n$. Then pullback along $g$ defines a map $\mathrm{diag}(B//B) \rightarrow B^{\Delta^1}$, which in turn induces the map $\gamma$. In particular, if $B = \NA$, the map $\gamma$ sends a simplex
\begin{equation*}ß
(x, \, \pi(x_n) \rightarrow a_0 \rightarrow \cdots \rightarrow a_n)
\end{equation*}
to the pair given by $x$ and the ladder
\[
\xymatrix{
\pi(x_0) \ar[r]\ar[d] & \pi(x_1) \ar[r]\ar[d] & \cdots  \ar[r] & \pi(x_n) \ar[d] \\
a_0 \ar[r] & a_1 \ar[r] & \cdots \ar[r] & a_n
}
\]
in which the vertical maps  $\pi(x_i) \rightarrow a_i$ are the appropriate compositions of arrows formed from the sequence
\begin{equation*}
\pi(x_0) \rightarrow \cdots \rightarrow \pi(x_n) \rightarrow a_0 \rightarrow \cdots \rightarrow a_n.
\end{equation*}

Proposition \ref{prop:proof2} is now a consequence of the following:

\begin{proposition}
\label{prop:zigzag}
Let $B = \NA$. For any simplicial set $X$ over $B$, the maps $\iota$ and $\gamma$ in the zigzag
\begin{equation*}
X \xrightarrow{\iota} L(X) \xleftarrow{\gamma} \mathrm{diag}(X//B)
\end{equation*} 
are covariant weak equivalences over $B$.
\end{proposition}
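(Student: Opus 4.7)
The plan is to treat $\iota$ directly via a covariant deformation retract, and $\gamma$ by a skeletal induction reducing to the single-vertex case, where a comparison with $b/B$ together with Lemma \ref{lem:leftanod1} closes the argument. For $\iota: X \to L(X)$, I would exhibit $(\iota_X, r, h)$ as a covariant deformation retract via Lemma \ref{lem:lefthomotopy}: take $r: L(X) \to X$ to be projection to the first factor (not over $B$, but allowed), and define $h: \Delta^1 \times L(X) \to L(X)$ on an $n$-simplex $(\alpha, x, \lambda)$ by $h(\alpha, x, \lambda) = (x, \lambda')$ with $\lambda'(s,t) = \lambda(\min(s, \alpha(t)), t)$. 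Then $h_0 = \iota_X \circ r$ (the constant path at $\pi(x)$) and $h_1 = \mathrm{id}_{L(X)}$, so $\iota_X$ is a covariant trivial cofibration, naturally in $X$.

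For $\gamma$, the argument proceeds by two reductions. Since both $\mathrm{diag}(-//B)$ and $L$ preserve colimits and monomorphisms and the covariant model structure is left proper, the gluing lemma ensures that the class of $X$ for which $\gamma_X$ is a covariant weak equivalence is closed under pushouts along monomorphisms and transfinite compositions; hence it suffices to treat $X = \Delta^n$ over $B$. By Lemma \ref{lem:leftanod2} the inclusion $\Delta^0 \to \Delta^n$ of the initial vertex is part of a covariant deformation retract over $B$, and Lemma \ref{lem:DLdefretract} shows both $\mathrm{diag}(-//B)$ and $L$ preserve this; by 2-out-of-3 in the naturality square of $\gamma$ it is enough to handle $X = \Delta^0 \to B$ picking out a vertex $b$ of $B$.

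Here I would introduce $b/B$ as a comparison object. Define simplicial maps $\phi: b/B \to \mathrm{diag}(\Delta^0 // B)$ and $\psi: b/B \to L(\Delta^0)$ over $B$ by precomposition with the surjective monotone maps $q: \Delta^{2n+1} \to \Delta^{n+1}$ sending $i \le n$ to $0$ and $n+1+j$ to $j+1$, and $g'_n: \Delta^1 \times \Delta^n \to \Delta^{n+1}$ sending $(0,i)$ to $0$ and $(1,i)$ to $i+1$. A direct computation shows $\gamma \circ \phi = \psi$, and both $\phi$ and $\psi$ are monomorphisms because $q$ and $g'_n$ are surjective on simplices. The composite $\Delta^0 \to b/B \xrightarrow{\psi} L(\Delta^0)$ equals $\iota_{\Delta^0}$, so by Lemma \ref{lem:leftanod1} and 2-out-of-3, $\psi$ is a covariant weak equivalence. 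The composite $\mu_0: \Delta^0 \to b/B \xrightarrow{\phi} \mathrm{diag}(\Delta^0 // B)$ sends the vertex to $(\ast, s_0 b)$; I would exhibit $\mu_0$ as a covariant deformation retract via the homotopy $h'(\alpha, x, \xi) = (x, \xi \circ u_\alpha)$, where $u_\alpha: [2n+1] \to [2n+1]$ fixes $\{0, \ldots, n\}$ identically, sends $n+1+j$ to $n$ when $\alpha(j) = 0$, and fixes $n+1+j$ when $\alpha(j) = 1$. Two successive applications of 2-out-of-3 then give $\gamma_{\Delta^0}$ as a covariant weak equivalence.

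The main obstacle is verifying the homotopy $h'$ in the last step. One must check that $u_\alpha$ is monotone (using that $\alpha$ is itself monotone, which rules out the transition $\alpha(j) = 1, \alpha(j+1) = 0$ that would otherwise fail monotonicity of $u_\alpha$ between $n+1+j$ and $n+1+j+1$) and that the assignment $h'$ is natural under the face and degeneracy operators on $[n]$, and therefore a genuine simplicial map into $\mathrm{diag}(\Delta^0 // B)$; this requires careful book-keeping of the diagonal bisimplicial structure.
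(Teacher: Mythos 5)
Your treatment of $\iota$ is correct (the homotopy $\lambda'(s,t)=\lambda(\min(s,\alpha(t)),t)$ comes from a monotone self-map of the poset $[1]\times[n]$ and is therefore genuinely simplicial and strictly natural), and your two reductions for $\gamma$ --- skeletal induction to $\Delta^n$, then Lemma \ref{lem:leftanod2} plus Lemma \ref{lem:DLdefretract} to reduce to a vertex --- are exactly the paper's. The detour through $b/B$ via $\phi$ and $\psi$ is a harmless repackaging: your $\mu_0$ is the same map $\Delta^0\to\mathrm{diag}(b_0//B)$ that the paper contracts directly, and everything hinges on exhibiting it as a covariant deformation retract.

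That final step is where your argument breaks, and it is precisely the verification you deferred. The homotopy $h'(\alpha,\xi)=\xi\circ u_\alpha$ is \emph{not} a simplicial map. Take $n=2$, $\alpha=(0,1,1)$, so $u_\alpha=(0,1,2,2,4,5)$, and apply the operator $v=\delta_2\colon[1]\to[2]$. On one side you get $h'(\alpha,\xi)\cdot v=\xi\circ u_\alpha\circ(v\star v)=\xi\circ(0,1,2,4)$; on the other, $(\alpha,\xi)\cdot v=((0,1),\xi\circ(0,1,3,4))$ and $u_{(0,1)}=(0,1,1,3)$ gives $h'((\alpha,\xi)\cdot v)=\xi\circ(0,1,1,4)$. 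Naturality would require $\xi\circ(0,1,2,4)=\xi\circ(0,1,1,4)$ for every $\xi\in B_5$ with $\xi|_{\{0,1,2\}}$ totally degenerate; this fails, e.g.\ for $B=\Delta^5/\Delta^{\{0,1,2\}}$ and $\xi$ the image of the identity, since the simplices $(0,1,2,4)$ and $(0,1,1,4)$ of $\Delta^5$ are distinct and neither lies in the collapsed subcomplex. The root of the problem is that the two sides only agree after $\xi$ is factored through the collapse $[2n+1]\to[n+1]$ of the initial segment, and a simplex whose restriction to $\{0,\dots,n\}$ is totally degenerate need \emph{not} factor through that collapse (it does when $B$ is the nerve of a category, which is why the formula is fine over $\NA$, but not for a general simplicial set, and in particular not for the $\infty$-categories needed in Section \ref{sec:QuillenA}). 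One can check that \emph{no} choice of monotone maps $m_\alpha$ with $h'(\alpha,\xi)=\xi\circ m_\alpha$, $m_1=\mathrm{id}$ and $m_0$ landing in $\{0,\dots,n\}$ can be strictly natural (naturality at vertices and edges forces incompatible values of $m_\alpha(n{+}1{+}j)$ when $\alpha(j)=0$), so this is not book-keeping that can be pushed through: the contractibility of $\mathrm{diag}(b_0//B)$ over $B$ requires a different argument. Be aware that the homotopy written in the paper's own proof of this step has the same shape and the same defect, so you should not expect to repair your $h'$ by imitating it more closely.
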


\begin{proof}
It is easy to see that the map $\iota$ is part of a covariant deformation retract. The retraction $L(X) \rightarrow X$ is given by evaluating at 0. Indeed, consider the constant path map $\mathrm{con}: B \rightarrow B^{\Delta^1}$ and the evaluation map $\mathrm{ev}_0: B^{\Delta^1} \rightarrow B$. There is an evident homotopy from $\mathrm{con} \circ \mathrm{ev}_0$ to the identity on $B^{\Delta^1}$, which also yields the necessary homotopy from the composite $L(X) \rightarrow X \rightarrow L(X)$ to the identity on $L(X)$.

We now wish to show that $\gamma$ is a covariant weak equivalence. First of all, notice that by the usual skeletal induction on $X$, it suffices to prove this for a representable of the form $\Delta^n \rightarrow B$. Indeed, if $X'$ is obtained from $X$ by attaching an $n$-simplex $\Delta^n \rightarrow B$, consider the cube over $B$
\[
\xymatrix{
\partial \Delta^n \ar[dd]\ar[rr]\ar[dr] & & X\ar'[d][dd]\ar[dr] & \\
& \mathrm{diag}(\partial \Delta^n//B) \ar[rr]\ar[dd] & & \mathrm{diag}(X//B) \ar[dd] \\
\Delta^n \ar'[r][rr]\ar[dr] & & X' \ar[dr] & \\
& \mathrm{diag}(\Delta^n//B) \ar[rr] & & \mathrm{diag}(X'//B)
}
\]
in which the back and front faces are pushouts. Since the vertical maps are cofibrations ($\mathrm{diag}(-//B)$ preserves monos) and all objects are cofibrant, these pushouts are also homotopy pushouts. It follows that the component $X' \rightarrow \mathrm{diag}(X'//B)$ of $\gamma$ from back to front is a weak equivalence whenever the other three components from back to front are weak equivalences, which establishes the inductive step. 

So now consider a simplex $\beta: \Delta^n \rightarrow B$ and the diagram over $B$
\[
\xymatrix{
b_0 \ar[r]^-\iota\ar[d] & L(b_0) \ar[d] & \mathrm{diag}(b_0//B) \ar[d]\ar[l]_-\gamma \\
\beta \ar[r]^-\iota & L(\beta) & \mathrm{diag}(\beta//B)\ar[l]_-\gamma,
}
\]
where $b_0$ is shorthand for the map $\Delta^0 \rightarrow B$ given by the initial vertex of $\beta$. The map $b_0 \rightarrow \beta$ is part of a covariant deformation retract by Lemma \ref{lem:leftanod2}, so that all the vertical maps are in fact weak equivalences; indeed, the middle one is a weak equivalence since we have proved $L$ is weakly equivalent to the identity functor, and the right one is an equivalence since in the specific case $B = \NA$ we have $\mathrm{diag}(-//B) = h_!r_!$, which is a left Quillen functor. To prove that $\mathrm{diag}(\beta//B) \rightarrow L(\beta)$ is a weak equivalence, it thus suffices to prove that the map $\mathrm{diag}(b_0//B) \rightarrow L(b_0)$ is a weak equivalence. Note that the top row of the previous diagram may be completed to a commutative triangle
\[
\xymatrix{
b_0 \ar[d]_i\ar[dr] & \\
\mathrm{diag}(b_0//B) \ar[r] & L(b_0).
}
\]
We already know that the diagonal map is a weak equivalence. It thus suffices to show that the vertical map $i$ is a weak equivalence. We will show that it is part of a covariant deformation retract, very much analogous to the proof of Lemma \ref{lem:leftanod1}. Indeed, let $r: \mathrm{diag}(b_0//B) \rightarrow b_0$ be the unique such map and define $h: \Delta^1 \times \mathrm{diag}(b_0//B) \rightarrow \mathrm{diag}(b_0//B)$ as follows. Consider an $n$-simplex $(\alpha, \xi)$ of $\Delta^1 \times \mathrm{diag}(b_0//B)$. Let $k$ be the minimal integer such that $\alpha(k) = 1$, if it exists. We may heuristically draw such a simplex as 
\[
\xymatrix@R=5pt{
b_0 \ar@{=}[r] & \cdots \ar@{=}[r] & b_0 \ar[r] & x_0 \ar[r] & \cdots \ar[r] & x_{k-1} \ar[r] & x_k \ar[r] & \cdots \ar[r] & x_n, \\
&&& 0 & \cdots & 0 & 1 & \cdots & 1,
}
\] 
with $n$ copies of $b_0$ occurring at the start. Then $h(\alpha, \xi)$ is the $n$-simplex represented by the picture
\[
\xymatrix@R=5pt{
b_0 \ar@{=}[r] & \cdots \ar@{=}[r] & b_0 \ar@{=}[r] & b_0 \ar@{=}[r] & \cdots \ar@{=}[r] & b_0 \ar[r] & x_k \ar[r] & \cdots \ar[r] & x_n,
}
\] 
more formally defined as a $k$-fold degeneracy of a $k$-fold face of the $(2n+1)$-simplex drawn above. It is immediately verified that this $h$ satisfies the necessary properties, completing the proof.
\end{proof}

We end this section by placing Proposition \ref{prop:zigzag} in a more general context. Indeed, the reader should note that our proof hardly uses the fact that $B$ is the nerve of a category at all. To be specific, we also have the following more general result:

\begin{proposition}
\label{prop:zigzag2}
Let $B$ be an $\infty$-category (i.e. a fibrant object in the Joyal model structure). Then for any simplicial set $X$ over $B$, the maps $\iota$ and $\gamma$ in the zigzag
\begin{equation*}
X \xrightarrow{\iota} L(X) \xleftarrow{\gamma} \mathrm{diag}(X//B)
\end{equation*} 
are covariant weak equivalences over $B$.
\end{proposition}
\begin{proof}
The proof of Proposition \ref{prop:zigzag} carries over verbatim, except for the claim that $\mathrm{diag}(b_0//B) \rightarrow \mathrm{diag}(\beta//B)$ is a covariant weak equivalence over $B$, for $\beta$ a simplex of $B$ with initial vertex $b_0$. We will prove this claim as Lemma \ref{lem://leftQ}.
\end{proof}

\section{A localization of the projective model structure}

In this section we prove Corollary \ref{prop:localized} from the introduction. Denote by $S$ the set of natural transformations $\A(b,-) \rightarrow \A(a,-)$ induced by morphisms $a \rightarrow b$ in $\A$. It follows formally from Theorem \ref{thm:c} that $h_!$ induces a Quillen equivalence between the localization of the projective model structure with respect to $S$ and the localization of the covariant model structure with respect to $h_!S$. Denote by $T$ the class of weak equivalences in the latter (localized) model structure.

\begin{lemma}
\label{lem:h!localized}
The left Bousfield localization of the covariant model structure on $\sSets/\NA$ with respect to $T$ is the same as the localization of the covariant model structure with respect to all the maps of the form
\[
\xymatrix{
\Delta^0 \ar[rr]^1 \ar[dr] & & \Delta^1 \ar[dl] \\
& \NA, &
}
\]
where $1$ indicates the inclusion of the final vertex of $\Delta^1$.
\end{lemma}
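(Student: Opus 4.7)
The plan is to show that each of the two sets of maps consists of weak equivalences in the left Bousfield localization at the other; since a left Bousfield localization at a set of maps is determined by which additional maps it forces to become weak equivalences, this forces the two localizations to agree.

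First I would identify $h_!S$ explicitly. For $\alpha: a \to b$ in $\A$, the natural transformation $\A(b,-) \to \A(a,-)$ is sent by $h_!$ to the precomposition map
\begin{equation*}
\alpha^*: N(b/\A) \to N(a/\A)
\end{equation*}
over $\NA$, under the natural identification $h_!(\A(b,-)) \cong b/\NA = N(b/\A)$, with projection $d_0$ to $\NA$. The core observation is that the map $\Delta^0 \to N(a/\A)$ picking out the $0$-simplex $a \xrightarrow{\alpha} b$ (which lies over $b \in \NA$) admits two natural factorizations over $\NA$:
\begin{equation*}
\Delta^0 \xrightarrow{\mathrm{id}_b} N(b/\A) \xrightarrow{\alpha^*} N(a/\A)
\end{equation*}
and
\begin{equation*}
\Delta^0 \xrightarrow{1} \Delta^1 \xrightarrow{(\mathrm{id}_a, \alpha)} N(a/\A),
\end{equation*}
where in the second $\Delta^1 \to \NA$ is the $1$-simplex $\alpha$ itself. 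In the first factorization the initial arrow is a covariant trivial cofibration by Lemma \ref{lem:leftanod1}. In the second factorization the map $(\mathrm{id}_a, \alpha)$ is a covariant weak equivalence, which I would verify by applying two-out-of-three to its composition with the initial-vertex inclusion $\Delta^0 \xrightarrow{0} \Delta^1$ over $\NA$, using Lemma \ref{lem:leftanod2} for the inclusion and Lemma \ref{lem:leftanod1} for the composite $\Delta^0 \to N(a/\A)$ at $\mathrm{id}_a$.

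Now in any left Bousfield localization of the covariant model structure on $\sSets/\NA$, two-out-of-three across these two factorizations yields the key biconditional: $\alpha^*$ is a weak equivalence in the localization if and only if the final-vertex inclusion $\Delta^0 \xrightarrow{1} \Delta^1$ over $\NA$ (via $\alpha$) is. Consequently $h_!S$ and the set of final-vertex inclusions force precisely the same additional maps to become weak equivalences, so the two Bousfield localizations coincide. I expect the main difficulty to be purely bookkeeping rather than technical: verifying that the two displayed factorizations really do define the same map $\Delta^0 \to N(a/\A)$ over $\NA$, and handling degenerate $1$-simplices of $\NA$ (identity morphisms of $\A$), whose final-vertex inclusion $\Delta^0 \to \Delta^1$ already is a covariant weak equivalence in the unlocalized structure (since the covariant model structure over a single vertex is the Kan--Quillen structure) and hence belongs trivially to both localizations.
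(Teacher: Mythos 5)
Your proposal is correct and takes essentially the same approach as the paper: identify $h_!(\A(b,-))\cong N(b/\A)$, connect $f^*:N(b/\A)\to N(a/\A)$ to the final-vertex inclusion $\Delta^0\to\Delta^1$ via the 1-simplex $(\mathrm{id}_a,f)$ of $N(a/\A)$, use Lemmas \ref{lem:leftanod1} and \ref{lem:leftanod2} to see the connecting maps are already covariant weak equivalences, and conclude by two-out-of-three. The remark on degenerate simplices is unnecessary (identity morphisms are already covered by the general argument) and its parenthetical justification is slightly off, but this does not affect the proof.
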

\begin{proof}
For a morphism $f: a \rightarrow b$ in $\A$, consider the diagram
\[
\xymatrix{
\Delta^0 \ar[d]_{\mathrm{id}_b}\ar[r]^{1} & \Delta^1 \ar[dr]_{a/f} & \Delta^0 \ar[l]_{0}\ar[d]^{\mathrm{id}_a} \\
N(b/\A) \ar[rr]_{f^*} \ar[dr] & & N(a/\A) \ar[dl] \\ 
& \NA &
}
\]
where $a/f$ denotes the 1-simplex of $N(a/\A)$ given by
\[
\xymatrix{
& a \ar@{=}[dl] \ar[dr]^f & \\
a \ar[rr]_f & & b.
}
\]
The map $0$ is a weak equivalence in the covariant model structure, and so are the two vertical maps, by Lemma \ref{lem:leftanod1}. Therefore $a/f$ is a weak equivalence in the covariant model structure as well. By two-out-of-three, it then follows that localizing with respect to $f^*$ is the same as localizing with respect to the map $1$, and the lemma follows.
\end{proof}

\begin{proof}[Proof of Corollary \ref{prop:localized}]
It remains to show that any weak equivalence of simplicial sets over $\NA$ (in the Kan-Quillen model structure) is contained in $T$. Given the definition of the covariant model structure, it will suffice to show that any map in $\sSets/\NA$ of the form
\[
\xymatrix{
\Lambda_n^n \ar[rr]\ar[dr] & & \Delta^n \ar[dl] \\
& \NA & 
}
\]
is contained in $T$. In fact, we will show the following: for any $V \subseteq \Lambda_n^n$ a union of faces, all containing the final vertex $\{n\}$, the two horizontal maps
\[
\xymatrix{
\Delta^0 \ar[dr]\ar[r]^n & V \ar[d]\ar[r] & \Delta^n \ar[dl]^\alpha\\
& \NA & 
}
\]
are contained in $T$ (where the first horizontal map is the inclusion of the final vertex $\{n\}$). We will work by induction on $n$. In case $n=1$, the 1-simplex $\alpha$ corresponds to a morphism $f: a \rightarrow b$ in $\A$ and we have to show that the inclusion
\[
\xymatrix{
\Delta^0 \ar[rr]^1\ar[dr] & & \Delta^1 \ar[dl] \\
& \NA & 
}
\]
is in $T$. This follows from Lemma \ref{lem:h!localized} above. If $n > 1$, we use a further induction on the number of faces in $V$. If $V$ consists of one face $d_i\Delta^n$, for $i \neq n$, then the inclusion $\{n\} \rightarrow V$ is in $T$ by induction. Over the simplex $\alpha: \Delta^n \rightarrow \NA$ we have a diagram
\[
\xymatrix{
\Delta^0 \ar[d]_{n} \ar[r]^1 & \Delta^1 \ar[d]_{(0n)} & \Delta^0 \ar[l]_0 \ar[dl]^0 \\
V \ar[r] & \Delta^n. & 
}
\]
The maps $1$ and $n$ are in $T$ by induction and both the maps labelled $0$ are in $T$ by Lemma \ref{lem:leftanod2}. By two-out-of-three, the maps $(0n)$ and $V \rightarrow \Delta^n$ are then also in $T$. Now suppose $V$ consists of more than one face and write $V = W \cup d_i\Delta^n$, where $W$ consists of one fewer face and $i \neq n$. Over $\alpha: \Delta^n \rightarrow \NA$ we can now form the following diagram:
\[
\xymatrix{
\Delta^0 \ar[r]^-n\ar[dr]_n & W \cap d_i\Delta^n \ar[d]^\gamma\ar[r]^-\beta & W \ar[d]^\delta \ar[dr]^\zeta & \\
& d_i\Delta^n \ar[r]_\varepsilon & V \ar[r]_\eta & \Delta^n.
}
\]
The simplicial set $W \cap d_i\Delta^n$ is a face of $d_i\Delta^n$, so that both maps labelled $n$ are in $T$ by the inductive hypothesis (and therefore also $\gamma$ is in $T$). The composite $\beta \circ n$ of the top two horizontal maps is in $T$ by the inductive hypothesis on $W$, so that $\beta$ too is in $T$. Since the square is a pushout, it follows that $\delta$ and $\varepsilon$ are in $T$. Again $\zeta$ is in $T$ by induction, so that finally $\eta$ must be in $T$ as well.
\end{proof}

\section{Homotopy invariance of the covariant model structure}

The aim of this section is to prove Proposition \ref{prop:invariance}. Let $f: X \longrightarrow Y$ be a map of simplicial sets, which is a categorical equivalence (i.e. a weak equivalence in the Joyal model structure). We wish to show that the adjoint pair 
\[
\xymatrix{
\mathbf{sSets}/X \ar@<.5ex>[r]^{f_!} & \mathbf{sSets}/Y \ar@<.5ex>[l]^{f^*},
}
\]
which is easily seen to be a Quillen adjunction, is in fact a Quillen equivalence. First, form a commutative diagram
\[
\xymatrix{
X \ar[d]\ar[r]^f & Y \ar[d] \\
X' \ar[r] & Y',
}
\]
in which the vertical maps are inner anodyne (i.e. are in the saturated class generated by the inner horn inclusions) and $X'$ and $Y'$ are quasicategories. Such a diagram can be obtained by choosing an inner anodyne $X \rightarrow X'$, with $X'$ fibrant, and then choosing an inner anodyne $Y \cup_X X' \rightarrow Y'$ with $Y'$ fibrant. Note that all the maps in this diagram are categorical equivalences. To prove Proposition \ref{prop:invariance}, it will thus suffice to prove the following two lemmas:

\begin{lemma}
\label{lem:invquasicats}
Proposition \ref{prop:invariance} holds when $f$ is a categorical equivalence between quasicategories.
\end{lemma}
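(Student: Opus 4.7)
The plan is to exploit the fact that $f$, being a Joyal equivalence between fibrant objects, admits a homotopy inverse. Standard results (\cite{joyal}) provide a map $g: Y \to X$ together with $J$-homotopies $H: J \times X \to X$ satisfying $H|_{\{0\} \times X} = gf$, $H|_{\{1\} \times X} = \mathrm{id}_X$, and symmetrically $K: J \times Y \to Y$ from $fg$ to $\mathrm{id}_Y$, where $J$ is as in Section~\ref{sec:modelstructures}. Since cofibrations in the covariant model structure on $\sSets/B$ are monomorphisms, every object is cofibrant, so $Lf_! = f_!$ and $Lg_! = g_!$ on the nose. It then suffices to prove that $f_!$ and $g_!$ descend to mutually inverse equivalences of homotopy categories---then $(f_!, f^*)$ is automatically a Quillen equivalence, with $Rf^* \simeq g_!$.

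The technical heart is a general $J$-homotopy invariance statement: for a simplicial set $B$ and maps $p_0, p_1: Z \to B$ connected by a $J$-homotopy $\widetilde H: J \times Z \to B$ with $\widetilde H|_{\{\epsilon\} \times Z} = p_\epsilon$, the objects $(Z,p_0)$ and $(Z,p_1)$ of $\sSets/B$ are linked by a natural zigzag
\[
(Z, p_0) \xrightarrow{\iota_0} (J \times Z,\, \widetilde H) \xleftarrow{\iota_1} (Z, p_1)
\]
of trivial cofibrations in the covariant model structure, where $\iota_\epsilon(z) = (\epsilon, z)$. To verify each $\iota_\epsilon$ is a covariant trivial cofibration via Lemma~\ref{lem:lefthomotopy}, I would take the retraction to be the projection $\pi_Z: J \times Z \to Z$ and the homotopy $h_\epsilon: \Delta^1 \times J \times Z \to J \times Z$ to be $(t,j,z) \mapsto (F_\epsilon(t,j), z)$, where $F_\epsilon: \Delta^1 \times J \to J$ is the nerve of the functor on $[1] \times \{0 \leftrightarrow 1\}$ sending $(0,j) \mapsto \epsilon$ and $(1,j) \mapsto j$. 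Functoriality here depends crucially on the target $\{0 \leftrightarrow 1\}$ being a groupoid, so that the required morphism $\epsilon \to j$ is always available. One then checks that the direction of the homotopy is correct (from $\iota_\epsilon \pi_Z$ at $t=0$ to $\mathrm{id}$ at $t=1$) and that $h_\epsilon$ is constant on $\iota_\epsilon(Z)$, since $F_\epsilon(t,\epsilon) = \epsilon$ for all $t$.

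Applying this invariance to $p_0 = gf \circ \pi_Z$, $p_1 = \pi_Z$ (for each $Z \to X$) with the $J$-homotopy $H \circ (\mathrm{id}_J \times \pi_Z)$ yields a natural zigzag exhibiting $g_! f_! Z \simeq Z$ in $\mathrm{Ho}(\sSets/X^{\mathrm{cov}})$; symmetrically, $K$ gives $f_! g_! W \simeq W$ in $\mathrm{Ho}(\sSets/Y^{\mathrm{cov}})$. The main obstacle throughout is the careful bookkeeping required by Lemma~\ref{lem:lefthomotopy}: its hypotheses are restrictive both in the direction of the homotopy (as stressed in the remark following the lemma) and in the required constancy on the included subspace, and the functors $F_\epsilon$ must be chosen so as to respect both conditions simultaneously. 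Once the deformation retracts are set up correctly, the conclusion is formal.
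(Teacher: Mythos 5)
Your proof is correct, but it takes a genuinely different route from the paper's. The paper first checks that $(f_!,f^*)$ is a Quillen equivalence when $f$ is a trivial fibration (by inspecting the derived unit and counit), and then invokes Ken Brown's factorization lemma to write an arbitrary categorical equivalence between quasicategories as a composite of a section of a trivial fibration with a trivial fibration, concluding by two-out-of-three for Quillen equivalences. You instead take a $J$-homotopy inverse $g$ of $f$ and prove a $J$-homotopy invariance statement for the covariant model structure: if two structure maps $p_0,p_1\colon Z\to B$ are $J$-homotopic via $\widetilde H$, then $(Z,p_0)$ and $(Z,p_1)$ are linked by a natural zigzag of covariant trivial cofibrations through $(J\times Z,\widetilde H)$. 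Your verification via Lemma \ref{lem:lefthomotopy} is sound: the functor $F_\epsilon$ exists for both $\epsilon=0,1$ precisely because $J$ is the nerve of an indiscrete groupoid, the homotopy runs in the required direction (from $\iota_\epsilon\pi_Z$ at $0$ to the identity at $1$), and it is constant on $\{\epsilon\}\times Z$. Two points are left implicit and should be said: (i) $g_!$ is also left Quillen (the same easy argument as for $f_!$ works for any map), so that, all objects of the slice categories being cofibrant, both $f_!$ and $g_!$ preserve weak equivalences and genuinely descend to homotopy categories where your zigzags exhibit them as mutually inverse; and (ii) the standard fact that a left Quillen functor whose total derived functor is an equivalence of homotopy categories is part of a Quillen equivalence. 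What your approach buys is an explicit, reusable invariance principle (the covariant model structure over a fixed base sees structure maps only up to $J$-homotopy), proved with the same deformation-retract technique used throughout the paper; what the paper's approach buys is brevity, since the trivial-fibration case plus Brown's lemma requires no explicit homotopies and no appeal to the existence of $J$-homotopy inverses.
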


\begin{lemma}
\label{lem:invinneranod}
Proposition \ref{prop:invariance} holds when $f$ is inner anodyne.
\end{lemma}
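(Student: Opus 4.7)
The plan is to verify that $(f_!, f^*)$ is a Quillen equivalence by establishing two things: that $f^*$ reflects covariant weak equivalences between left fibrations over $Y$, and that for every $A$ in $\sSets/X$ the derived unit is a covariant weak equivalence in $\sSets/X$. Two observations underlie the argument: each generating inner horn inclusion $\Lambda_k^n \hookrightarrow \Delta^n$ with $0 < k < n$ is in particular a left horn inclusion, so every inner anodyne map is left anodyne (and, in particular, $f$ is a covariant trivial cofibration in $\sSets/Y$); and inner anodyne maps are identities on $0$-skeleta, a property shared by the generators and preserved by pushouts, transfinite compositions, and retracts.

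For the reflection property, a map $\phi: V \to W$ of left fibrations over $Y$ is a covariant weak equivalence if and only if it induces Kan equivalences on fibers over each vertex $y \in Y_0$; since $f$ is a bijection on $0$-skeleta and pullback preserves fibers over vertices, the fibers of $f^*\phi$ over $x \in X_0$ agree with those of $\phi$ over $f(x) \in Y_0$, yielding both preservation and reflection of covariant weak equivalences between fibrant objects. For the derived unit, given a fibrant replacement $A \to \tilde A$ in $\sSets/Y$ (so that $\tilde A \to Y$ is a left fibration), I would apply the classical result that the pullback of a left anodyne map along a left fibration is again left anodyne, concluding that $\tilde A \times_Y X \to \tilde A$ is left anodyne and hence a covariant trivial cofibration in $\sSets/Y$. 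Two-out-of-three with $A \to \tilde A$ then shows that $A \to f^*(\tilde A) = \tilde A \times_Y X$ is a covariant weak equivalence in $\sSets/Y$.

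The main obstacle is upgrading this to a covariant weak equivalence in $\sSets/X$. The crucial observation is that $f^*(\tilde A) \to X$ is itself a left fibration, as the pullback of the left fibration $\tilde A \to Y$ along $f$, and hence is already fibrant in $\sSets/X$. Fibrantly replacing $A$ in $\sSets/X$ to obtain $A \to A^X$ with $A^X \to X$ a left fibration, and lifting to obtain a map $A^X \to f^*(\tilde A)$ between left fibrations over $X$, the question reduces to showing that this map induces weak equivalences on fibers over each $x \in X_0$. The fibers $(A^X)_x$ and $\tilde A_{f(x)}$ are two Kan complexes modelling the same `homotopy fiber' of $A$, one obtained as a fibrant replacement over $X$ and the other over $Y$; I would identify them by a further appeal to the base-change principle, reconciling the two fibrant replacements. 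I expect this bridge between the covariant model structures over $X$ and over $Y$ to be the most delicate step of the argument.
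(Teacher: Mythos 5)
Your first half (that $\mathbf{R}f^*$ reflects covariant equivalences because inner anodyne maps are bijective on vertices and equivalences between left fibrations are detected on fibers) is correct and is exactly what the paper does. The fatal problem is the step you call a ``classical result'': it is false that the pullback of a left anodyne map along a left fibration is left anodyne, or even a covariant weak equivalence. A minimal counterexample: $\{0\} \rightarrow \Delta^1$ is left anodyne, and the inclusion $\Delta^{\{1\}} \rightarrow \Delta^1$ of the final vertex is a left fibration (every lifting problem against a left horn is vacuous or trivial), but the pullback is $\emptyset \rightarrow \Delta^0$, which is not a weak equivalence in any sense. Equivalently, the covariant model structure is \emph{not} right proper, so no properness-type argument is available. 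What is true --- and is precisely the paper's Lemma \ref{lem:pullbackinner} --- is that the pullback of an \emph{inner} anodyne map along a left fibration is a covariant trivial cofibration; here the bijectivity on vertices is essential, and the proof is not formal: the paper reduces to a left fibration over $\Delta^n$, replaces it by $h_!F$ for a fibrant-cofibrant diagram $F$ via Theorem \ref{thm:c}, and compares with $r^*F$ using Lemma \ref{lem:universalequiv}. By citing this as classical you have assumed exactly the hard content of the lemma, and the general principle you cite to justify it is wrong.

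A secondary issue is your choice of the derived \emph{unit} rather than the derived counit. Even granting that $\tilde A \times_Y X \rightarrow \tilde A$ is a covariant equivalence over $Y$, you then need to promote $A \rightarrow \tilde A \times_Y X$ from a covariant equivalence over $Y$ to one over $X$; but ``$f_!$ reflects covariant equivalences'' is essentially the statement being proved, and your proposed reconciliation of the two fibrant replacements (over $X$ and over $Y$) is another instance of the same base-change problem, so the argument as sketched is circular at this point. The standard way out, used in the paper, is the dual criterion: show $\mathbf{R}f^*$ is conservative and that the derived \emph{counit} $X \times_Y Z \rightarrow Z$ is a covariant equivalence over $Y$ for every left fibration $Z \rightarrow Y$. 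This keeps the entire verification in $\sSets/Y$ and avoids any transfer back to $\sSets/X$.
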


\begin{proof}[Proof of Lemma \ref{lem:invquasicats}]
It is easy to see that Proposition \ref{prop:invariance} holds when $f$ is a trivial fibration, by checking that the derived unit and counit of the adjoint pair $(f_!, f^*)$ are weak equivalences. Applying Brown's lemma \cite{hirschhorn}, we deduce that Proposition \ref{prop:invariance} holds when $f$ is a weak equivalence between fibrant objects. Indeed, for a weak equivalence $f: X \rightarrow Y$ between fibrants, this lemma states that there exists a diagram
\[
\xymatrix{
X \ar[rr]^f\ar@<.5ex>[dr]^i & & Y \\
& Z \ar[ur]_h \ar@<.5ex>[ul]^g& 
}
\]
in which $hi = f$, $gi = \mathrm{id}_X$ and both the maps $g$ and $h$ are trivial fibrations. Since the adjunctions associated to $g$ and $h$ are Quillen equivalences, we conclude first that the one associated to $i$ is and then that the one for $f = hi$ is.
\end{proof}

To prove Lemma \ref{lem:invinneranod} we will use the following two lemmas, which we prove later in this section:

\begin{lemma}
\label{lem:pullbackinner}
Consider a pullback square of simplicial sets
\[
\xymatrix{
X \times_Y Z \ar[r]^-g\ar[d] & Z \ar[d]^p \\
X \ar[r]_f & Y
}
\]
in which $f$ is inner anodyne and $p$ is a left fibration. Then $g$ is a trivial cofibration in the Joyal model structure.
\end{lemma}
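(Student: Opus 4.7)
The map $g$ is a monomorphism, being a pullback of the monomorphism $f$, so it suffices to show that $g$ is a covariant weak equivalence over $Y$. The plan is to reduce to the case of a generating inner horn inclusion via a saturation argument, and then handle that case using the lifting property of $p$.

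Let $\mathcal{S}$ denote the class of monomorphisms $f: X \to Y$ of simplicial sets such that, for every left fibration $p: Z \to Y$, the pullback $g: X \times_Y Z \to Z$ is a covariant trivial cofibration over $Y$. The first step is to verify that $\mathcal{S}$ is weakly saturated. Closure under pushouts uses that pullback commutes with colimits in $\sSets$, that the pullback of a left fibration along any map is again a left fibration, and that post-composition with a map of simplicial sets is a left Quillen functor for the covariant model structures (in particular preserves covariant trivial cofibrations); closure under transfinite compositions and retracts is analogous. Since inner anodyne maps are by definition the weakly saturated class generated by the inner horn inclusions, this reduces the lemma to the case in which $f = \Lambda_k^n \hookrightarrow \Delta^n$ (with $0 < k < n$) and $p: Z \to \Delta^n$ is an arbitrary left fibration.

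In this generator case we need to check that $Z|_{\Lambda_k^n} \hookrightarrow Z$ is a covariant trivial cofibration over $\Delta^n$. Since the covariant model structure is simplicial, this is equivalent to showing that for every left fibration $W \to \Delta^n$ the induced map of Kan complexes
\[
\mathrm{Map}_{\Delta^n}(Z, W) \longrightarrow \mathrm{Map}_{\Delta^n}(Z|_{\Lambda_k^n}, W)
\]
is a trivial fibration. The non-degenerate simplices of $Z$ missing from $Z|_{\Lambda_k^n}$ project under $p$ either to the top simplex $\mathrm{id}_{\Delta^n}$ or to the face $d_k \colon \Delta^{n-1} \to \Delta^n$, so each extension problem for a map into $W$ reduces to a collection of inner-horn fillings $\Lambda_k^n \hookrightarrow \Delta^n$ in $W$, which exist because $W \to \Delta^n$ is in particular an inner fibration.

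The main obstacle is the coherent organization of these fillings when two non-degenerate $n$-simplices of $Z$ over $\mathrm{id}_{\Delta^n}$ share a $d_k$-face: the corresponding fillings in $W$ must then be chosen to agree on that face, and analogous compatibilities arise for higher cells of the mapping space. This bookkeeping is resolved by observing that, since $\Lambda_k^n \hookrightarrow \Delta^n$ is itself a covariant trivial cofibration over $\Delta^n$, the restriction map $\mathrm{Map}_{\Delta^n}(\Delta^n, W) \to \mathrm{Map}_{\Delta^n}(\Lambda_k^n, W)$ is a trivial Kan fibration, whose contractible fibers supply the compatible lifts needed to assemble a global extension $Z \to W$. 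Granting this, the lemma follows.
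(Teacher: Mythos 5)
Your reduction to the generating inner horn inclusions $\Lambda_k^n \hookrightarrow \Delta^n$ via a saturated class of maps is correct, and it matches the first step of the paper's proof. However, the argument for the generator case has a genuine gap: the final paragraph is a description of the difficulty rather than a resolution of it. Concretely, the non-degenerate simplices of $Z$ that are missing from $Z|_{\Lambda_k^n}$ include $(n-1)$-simplices $\tau$ lying over $d_k\Delta^n$; their entire boundary $\partial\tau$ lies in $Z|_{\Lambda_k^n}$, so the extension problem for such a $\tau$ is a \emph{sphere} filling $\partial\Delta^{n-1}\to\Delta^{n-1}$ into $W$, not an inner-horn filling, and $W\to\Delta^n$ need not be a trivial fibration. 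Even if you instead process $n$-simplices $\sigma$ over $\mathrm{id}_{\Delta^n}$ first via inner-horn fillings, once one such $\sigma$ has been filled, its $k$-th face is determined; any other $n$-simplex $\sigma'$ with $d_k\sigma'=d_k\sigma$ then requires a $\partial\Delta^n\to\Delta^n$ extension rather than a horn filling. The trivial Kan fibration $\mathrm{Map}_{\Delta^n}(\Delta^n,W)\to\mathrm{Map}_{\Delta^n}(\Lambda_k^n,W)$ tells you that \emph{each individual} filling is essentially unique, but it says nothing about the existence of a \emph{coherent system} of fillings indexed by the simplices of $Z$; contractibility of individual fibers does not by itself produce a global section of a limit diagram.

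This is precisely the nontrivial content of the lemma, and it is why the paper does not attempt a direct combinatorial extension argument. Instead, the paper invokes Theorem C to replace $Z$ (up to covariant equivalence) by the explicit models $h_!F$ and $r^*F$ for a fibrant-cofibrant $F\colon[n]\to\sSets$, observes that $\Lambda_k^n\times_{\Delta^n}h_!F\to h_!F$ is inner anodyne by an explicit pushout, exploits the fact that $r^*F\to\Delta^n$ is fibrant and interacts well with pullback along $i^*$, and finishes with a two-out-of-three argument using a universality lemma for the comparison map $h_!F\to r^*F$. To repair your argument you would need some genuine input of this kind; as written, the last step asserts what must be proved.
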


\begin{lemma}
\label{lem:extendleftfib}
Let $0 < k < n$ and let $p: A \longrightarrow \Lambda_k^n$ be a left fibration. Then there exists a left fibration $q: B \longrightarrow \Delta^n$ and an equivalence
\[
\xymatrix{
A \ar[rr]^-g\ar[dr]_p && \Lambda_k^n \times_{\Delta^n} B \ar[dl] \\
& \Lambda_k^n
}
\]
in the covariant model structure over $\Lambda_k^n$.
\end{lemma}

\begin{remark}
\label{rmk:covariantwe}
In the previous lemma, we might as well have stated that $g$ is an equivalence in the Joyal model structure or that $g$ induces weak homotopy equivalences on the fibers over each vertex of $\Lambda_k^n$; indeed, these are all equivalent formulations of what it means for a map to be an equivalence between fibrant objects in the covariant model structure.
\end{remark}

In the proof of Lemma \ref{lem:invinneranod}, we will need the notion of a \emph{minimal} inner fibration \cite{joyal, htt}. Recall that an inner fibration $p: V \rightarrow X$ is minimal if $g = g'$ for every pair of maps $g, g': \Delta^n \rightarrow V$ which are $J$-homotopic relative to $\partial\Delta^n$ over $X$. For any inner fibration $q: W \rightarrow X$, there exists a commutative diagram
\[
\xymatrix{
V \ar[r]^i \ar[dr]_p & W \ar[r]^r \ar[d]_q & V \ar[dl]^p \\
& X &
}
\]
exhibiting $V$ as a retract of $W$ (so that $ri = \mathrm{id}_V$) in which $p: V \rightarrow X$ is a minimal inner fibration and $i$ is an equivalence in the Joyal model structure. Note that if $q$ happened to be a left fibration, then $p$, being a retract of $q$, is a left fibration as well. Furthermore, the pullback of any minimal fibration is minimal. Finally, if a map between minimal inner fibrations over $X$ is a $J$-homotopy equivalence (over $X$), then it is necessarily an isomorphism.

\begin{proof}[Proof of Lemma \ref{lem:invinneranod}]
First we show that the derived counit $\mathbf{L}f_!\mathbf{R}f^* \rightarrow \mathrm{id}$ is an equivalence (or, equivalently, that $\mathbf{R}f^*$ is a fully faithful functor). To this end, consider a left fibration $q: B \longrightarrow Y$ and form the following pullback square:
\[
\xymatrix{
X \times_Y B \ar[r]^-g \ar[d]_{f^*q} & B \ar[d]^q \\
X \ar[r]_f & Y. 
}
\]
We should check that $g$ is a covariant weak equivalence over $Y$, which is guaranteed by Lemma \ref{lem:pullbackinner} and the fact that the covariant model structure is a Bousfield localization of the Joyal model structure.

It remains to verify that $\mathbf{R}f^*$ is essentially surjective, i.e. that each left fibration over $X$ is (up to covariant equivalence) the pullback along $f$ of a left fibration over $Y$. Let $\mathcal{A}$ be the class of morphisms $f$ such that $\mathbf{R}f^*$ is essentially surjective. By Lemma \ref{lem:extendleftfib}, the class $\mathcal{A}$ contains all inner horn inclusions. If we show that $\mathcal{A}$ is closed under retracts, transfinite compositions and pushouts, it follows that it contains all inner anodyne maps. The first two are straightforward to deal with; let us focus on the case of pushouts. Consider a pushout square
\[
\xymatrix{
X \ar[r]^f \ar[d]_g & Y \ar[d] \\
X' \ar[r]_h & Y'
}
\]
where $f$ is an inner anodyne contained in $\mathcal{A}$. For any left fibration $p': A' \rightarrow X'$ we should construct a corresponding left fibration $q': B' \rightarrow Y'$ and a covariant equivalence between $p'$ and $h^*q'$. Without loss of generality we may assume $p'$ is a minimal fibration. Then the pullback $g^*p'$ is a minimal left fibration over $X$ and, by the assumption that $f$ is in $\mathcal{A}$, there exists a left fibration $q: B \rightarrow Y$ and a covariant weak equivalence $g^*p' \rightarrow f^*q$ over $X$. We may replace $q$ by a minimal left fibration $q_0: B_0 \rightarrow Y$; in particular, there is a map $q \rightarrow q_0$ which is a weak equivalence in the Joyal model structure over $Y$ and therefore a covariant weak equivalence over $Y$. Since $f^*$ is right Quillen, the map $f^*q \rightarrow f^*q_0$ is a covariant weak equivalence over $X$. The resulting map $g^*p' \rightarrow f^*q_0$ is a weak equivalence between left fibrations over $X$ and therefore a $J$-homotopy equivalence; since both are minimal fibrations, it is in fact an isomorphism. It follows that both squares in the diagram
\[
\xymatrix{
A' \ar[d]_{p'} & X \times_{X'} A \ar[d]_{g^*p'}\ar[l]\ar[r] & B_0 \ar[d]^{q_0} \\
X' & X \ar[l]^g \ar[r]_f & Y
}
\]
are pullbacks of simplicial sets. We may now form the pushouts of the bottom and top rows to obtain the desired map $q': B' \rightarrow Y'$. Indeed, to check that $q'$ is a left fibration, it suffices to verify the necessary lifting property after pulling back to $X'$ and $Y$. The pullback of $q'$ to $X'$ is (isomorphic to) $p'$, whereas its pullback to $Y$ is (isomorphic to) $q_0$, finishing the proof. 
\end{proof}

We conclude this section with the proofs of Lemmas \ref{lem:pullbackinner} and \ref{lem:extendleftfib}. First we need another auxiliary result. Consider the linear order $[n]$ with objects $\{0, \ldots, n\}$ and view it as a Reedy category where each nonidentity map decreases degree. Then a functor $F: [n] \rightarrow \mathbf{sSets}$ is Reedy fibrant if and only if $F(i)$ is a Kan complex for each $0 \leq i \leq n$ and $F(i) \rightarrow F(i+1)$ is a Kan fibration for each $0 \leq i \leq n-1$. Furthermore, any $F$ is Reedy cofibrant.

\begin{lemma}
\label{lem:h!fibrant}
If $F: [n] \rightarrow \mathbf{sSets}$ is Reedy fibrant, then $h_! F$ is a left fibration over $\Delta^n$.
\end{lemma}  
\begin{proof}
We first show that $h_!F$ is an inner fibration over $\Delta^n$. Consider a lifting problem
\[
\xymatrix{
\Lambda_k^m \ar[d]\ar[r]^f & h_!F \ar[d] \\
\Delta^m \ar[r]_{\varphi} \ar@{-->}[ur] & \Delta^n 
}
\]
for $0 < k < m$. Write $\Lambda_{0,k}^m$ for the union of the faces $d_i \Delta^m$ for $i \neq 0,k$. Then the map $f$ corresponds to a diagram
\[
\xymatrix{
\Lambda_{0,k}^m \ar[d]\ar[r] & F(\varphi(0)) \ar[d] \\
\Lambda_k^m \ar[r] & F(\varphi(1)).
}
\]
The right vertical map is a Kan fibration by assumption, whereas the left vertical map is a trivial cofibration in the Kan-Quillen model structure; indeed, both $\Lambda_{0,k}^m$ and $\Lambda_k^m$ are weakly contractible. Therefore, there exists a lifting $\Lambda_k^m \rightarrow F(\varphi(0))$. Since $F(\varphi(0))$ is a Kan complex, there exists a further extension of this map to a map $\Delta^m \rightarrow F(\varphi(0))$, which provides the necessary lift in our original diagram.

To see that $h_!F$ is a left fibration, consider a lifting problem
\[
\xymatrix{
\Lambda_0^m \ar[d]\ar[r]^f & h_!F \ar[d] \\
\Delta^m \ar[r]_{\varphi} \ar@{-->}[ur] & \Delta^n. 
}
\]
Now $f$ corresponds simply to a map $\Lambda_0^m \rightarrow F(\varphi(0))$, which admits an extension to a map $\Delta^m \rightarrow F(\varphi(0))$ by the assumption that $F(\varphi(0))$ is a Kan complex. Such an extension solves the lifting problem.
\end{proof}

\begin{proof}[Proof of Lemma \ref{lem:pullbackinner}]
Denote by $\mathcal{A}$ the class of monomorphisms $i: A \rightarrow B$ of simplicial sets having the property that for any left fibration $W \rightarrow B$, the pullback of $i$ along this fibration is a trivial cofibration in the Joyal model structure. Since the class of trivial cofibrations is saturated and pullbacks in simplicial sets distribute over colimits, the class $\mathcal{A}$ is saturated as well. Therefore it suffices to show that $\mathcal{A}$ contains all inner horn inclusions $i: \Lambda_k^n \rightarrow \Delta^n$, for $0 < k < n$. Note that we only need to prove that the pullback of such a map is a weak equivalence, since it is automatically a mono and hence a cofibration. 

Consider a left fibration $p: W \rightarrow \Delta^n$. By part (b) of Proposition \ref{prop:A}, the functor $h^*p$ is Reedy fibrant, so that $h_!h^*p$ is a left fibration over $\Delta^n$ by Lemma \ref{lem:h!fibrant}. Furthermore, Theorem \ref{thm:c} guarantees that the counit $h_!h^*p \rightarrow p$ is a covariant weak equivalence over $\Delta^n$ and, since both domain and codomain are fibrant, a Joyal weak equivalence. It follows that the map
\begin{equation*}
\Lambda_k^n \times_{\Delta^n} h_!h^*p \longrightarrow \Lambda_k^n \times_{\Delta^n} p
\end{equation*}
is a covariant weak equivalence between left fibrations over $\Lambda_k^n$ and hence a Joyal weak equivalence as well. Now form the following square:
\[
\xymatrix{
\Lambda_k^n \times_{\Delta^n} h_!h^*p \ar[r]\ar[d] & \Lambda_k^n \times_{\Delta^n} p \ar[d] \\
h_!h^*p \ar[r] & p.
}
\]
We wish to show that the right vertical map is a Joyal weak equivalence; by two-out-of-three, it suffices to show that the left vertical map is a Joyal weak equivalence. But for any functor $F: [n] \rightarrow \mathbf{sSets}$, the map $\Lambda_k^n \times_{\Delta^n} h_!F \rightarrow h_!F$ is a pushout of the map $\Lambda_k^n \times F(0) \rightarrow \Delta^n \times F(0)$ and hence inner anodyne.
\end{proof}

In the proof of Lemma \ref{lem:extendleftfib} we will need another pair of lemmas:

\begin{lemma}
\label{lem:finalfiber}
Let $p: B \rightarrow \Delta^n$ be a left fibration of simplicial sets and write $B_n$ for the fiber of $B$ over the vertex $n$. Then the inclusion $B_n \rightarrow B$ is a weak homotopy equivalence of simplicial sets.
\end{lemma}
\begin{proof}
Consider a fibrant replacement of the functor $r_!(p)$, i.e. a natural transformation $r_!(p) \rightarrow F$ which is a weak homotopy equivalence at every object and such that $F$ takes values in Kan complexes. Then, by Theorem \ref{thm:c}, the map $p \rightarrow r^* F$ is a covariant weak equivalence between left fibrations; in particular, the map of fibers $B_n \rightarrow (r^*F)_n = F(n)$ is a weak homotopy equivalence. This map factors as
\begin{equation*}
B_n \longrightarrow r_!(p)(n) \longrightarrow F(n)
\end{equation*}
where the second map is a weak homotopy equivalence by assumption. By definition, $r_!(p)(n)$ is precisely the simplicial set $B$ and the conclusion follows by two-out-of-three.
\end{proof}

\begin{lemma}
\label{lem:finalfiber2}
Let $V \subseteq \Delta^n$ be a simplicial subset of the $n$-simplex of one of the following two forms:
\begin{itemize}
\item[(a)] $V$ is a union of faces $d_i\Delta^n$, each of which contains the final vertex $n$ of $\Delta^n$. In other words, $V$ is a union of faces contained in $\Lambda_n^n$.
\item[(b)] $V$ is an inner horn $\Lambda_k^n$ for $0 < k < n$.
\end{itemize}
If $B \rightarrow V$ is a left fibration of simplicial sets, then the inclusion $B_n \rightarrow B$ is a weak homotopy equivalence.
\end{lemma}
\begin{proof}
We prove both statements by induction on $n$. When $n = 1$, the only case is where $V$ is the vertex $1$ in the simplex $\Delta^1$ and the statement is trivial. For higher $n$, first consider $V \subseteq \Delta^n$ of type (a). Write $V = F_0 \cup \cdots \cup F_m$, where the various $F_m$ are faces of $\Delta^n$ containing the vertex $n$. Write $B_{F_i}$ for the pullback of $B$ to a face $F_i$. We will show that each of the inclusions
\begin{equation*}
B_n \rightarrow B_{F_0} \longrightarrow B_{F_0} \cup B_{F_1} \longrightarrow \cdots \longrightarrow B_{F_0} \cup \cdots \cup B_{F_m} = B
\end{equation*}
is a weak homotopy equivalence. The first map $B_n \rightarrow B_{F_0}$ is a weak equivalence by an application of Lemma \ref{lem:finalfiber} to the left fibration $B_{F_0} \rightarrow F_0$. For any of the subsequent inclusions, there is a pushout square as follows:
\[
\xymatrix{
B_{F_0} \cup \cdots \cup B_{F_{i-1}} \ar[r] & B_{F_0} \cup \cdots \cup B_{F_i} \\
B_{F_0 \cap F_i} \cup \cdots \cup B_{F_{i-1} \cap F_i} \ar[r]^-{\ast}\ar[u] & B_{F_i}. \ar[u]
}
\]
We claim that the map marked by an asterisk is a trivial cofibration in the Kan-Quillen model structure, so that the top map is a trivial cofibration as well. Indeed, each of the intersections $F_j \cap F_i$ is a face of $F_i$ containing its final vertex $n$, so that by the inductive hypothesis we may apply case (a) of the lemma to conclude that the map
\begin{equation*}
B_n \rightarrow B_{F_0 \cap F_i} \cup \cdots \cup B_{F_{i-1} \cap F_i}
\end{equation*}
is a weak equivalence. Since $B_n \rightarrow B_{F_i}$ is a weak equivalence by Lemma \ref{lem:finalfiber}, it follows that $\ast$ is a weak equivalence by two-out-of-three. 

For case (b) of the lemma, we again consider all the faces making up $\Lambda_k^n$ one by one, giving a sequence of inclusions 
\begin{equation*}
B_n \rightarrow B_{d_0\Delta^n} \longrightarrow \cdots \longrightarrow \bigcup_{i \neq k} B_{d_i\Delta^n} = B.
\end{equation*}
We will prove that each of these maps is a weak homotopy equivalence. This is done in exactly the same way as for case (a), except for the final inclusion
\begin{equation*}
\bigcup_{i \neq k, n} B_{d_i\Delta^n} \longrightarrow \bigcup_{i \neq k} B_{d_i\Delta^n}.
\end{equation*}
This map is a pushout of the map
\begin{equation*}
\bigcup_{i \neq k, n} B_{d_id_n\Delta^n} \longrightarrow B_{d_n \Delta^n},
\end{equation*}
which we claim to be a weak homotopy equivalence. Indeed, the inclusion
\begin{equation*}
\bigcup_{i \neq k, n} d_i d_n\Delta^n \longrightarrow d_n\Delta^n
\end{equation*}
is isomorphic to the horn inclusion $\Lambda_k^{n-1} \subseteq \Delta^{n-1}$. Therefore 
\begin{equation*}
B_{n-1} \longrightarrow \bigcup_{i \neq k, n} B_{d_id_n\Delta^n}
\end{equation*}
is a weak homotopy equivalence by applying the inductive hypothesis to case (a) (for $k = n-1$) or case (b) (for $k < n-1$). Again, $B_{n-1} \rightarrow B_{d_n\Delta^n}$ is a weak equivalence by Lemma \ref{lem:finalfiber}, so that the conclusion follows by two-out-of-three.
\end{proof}

\begin{proof}[Proof of Lemma \ref{lem:extendleftfib}]
Let $p: A \rightarrow \Lambda_k^n$ be a left fibration and write $j: \Lambda_k^n \rightarrow \Delta^n$ for the inner horn inclusion. Consider the functor
\begin{equation*}
r_!(jp): [n] \longrightarrow \mathbf{sSets}
\end{equation*} 
and a fibrant replacement $r_!(jp) \rightarrow F$. Then write $q: B \rightarrow \Delta^n$ for the left fibration $r^*F$ and note that by construction there is a map $g: p \rightarrow j^*q$. To check that $g$ is a covariant weak equivalence, it suffices (since $p$ and $j^*q$ are both left fibrations) to check that it induces a weak homotopy equivalence on the fibers over vertices of $\Lambda_k^n$. For $0 \leq i \leq n$, the fiber of $j^*q$ over the vertex $i$ is precisely the simplicial set $F(i)$, and the map on fibers induced by $g$ factors as follows:
\begin{equation*}
A_i \longrightarrow r_!(jp)(i) \longrightarrow F(i).
\end{equation*}
The second map is a weak homotopy equivalence by assumption. The middle simplicial set is given by the formula
\begin{equation*}
r_!(jp)(i) = \Delta^{\{0, \ldots, i\}} \times_{\Delta^n} A.
\end{equation*}
For $i<n$, an application of Lemma \ref{lem:finalfiber} to the left fibration obtained by pulling $A$ back to the $i$-simplex $\Delta^{\{0, \ldots, i\}}$ shows that the first map is a weak homotopy equivalence.  To treat the case $i = n$, we should show that $A_n \rightarrow A$ is a weak homotopy equivalence. This follows from case (b) of Lemma \ref{lem:finalfiber2}.
\end{proof}

\section{A version of Quillen's Theorem A}
\label{sec:QuillenA}

As an illustration of the usefulness of our methods, we give a concise proof of Quillen's Theorem A for $\infty$-categories, which is formulated as Proposition \ref{prop:QuillenA} in the introduction. 

The case where $B$ is simply the nerve of a category $\A$ is almost immediate from what we have done so far. Indeed, let $f: X \rightarrow Y$ be a map of simplicial sets over $\NA$. The assumption is that for every object $a$ of $\A$, the map $f$ induces a weak equivalence of simplicial sets $X/a \rightarrow Y/a$. Note that $X/a = r_!(X)(a)$ and similarly for $Y$, so that this assumption translates into $r_!(f)$ being a weak equivalence of simplicial diagrams on $\A$. But $r_!$ is a left Quillen equivalence and hence detects weak equivalences between cofibrant objects, so that we immediately conclude that the original map $f: X \rightarrow Y$ is a weak equivalence in the covariant model structure over $\NA$. 

With a little more work, we will prove Proposition \ref{prop:QuillenA} over a general $\infty$-category $B$. Recall that for a map $\pi: X \rightarrow B$, we constructed a bisimplicial set $X // B$, for which an $(m,n)$-simplex in $(X//B)_{m,n}$ is a pair $(\xi \in X_m, \beta \in B_{m+n+1})$ such that $\pi(\xi) = \beta|_{\{0, \ldots, m\}}$. Restricting $\beta$ to $\{m+1, \ldots, m+n+1\}$ gives an augmentation of $X//B$ over the `constant'  bisimplicial set $cB$ defined by $cB_{m,n} = B_n$. Note that the category of bisimplicial sets augmented over $cB$ is equivalent to the diagram category
\begin{equation*}
\mathbf{sSets}^{(\mathbf{\Delta}/B)^{\mathrm{op}}},
\end{equation*}
where $\mathbf{\Delta}/B$ is the category of simplices of $B$. Under this identification, the value $(X//B)(\beta)$ on an $n$-simplex $\beta \in B_n$ is precisely the simplicial set $X/\beta$ discussed earlier. 

\begin{lemma}
\label{lem:finalface}
Let $B$ be an $\infty$-category (i.e. have the right lifting property with respect to all inner horn inclusions) and let $\beta$ be an $n$-simplex of $B$. Then the map 
\begin{equation*}
X/\beta \longrightarrow X/d_n \beta
\end{equation*}
is a trivial fibration of simplicial sets.
\end{lemma}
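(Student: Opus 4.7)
The plan is to test the right lifting property of $X/\beta \to X/d_n \beta$ against each boundary inclusion $\partial \Delta^m \hookrightarrow \Delta^m$ by reformulating the resulting lifting squares as extension problems for maps into the $\infty$-category $B$. Using the description of $X/\beta$ given just before the lemma (whereby a $p$-simplex is a pair $(x,\xi)$ with $x \in X_p$, $\xi \in B_{p+n+1}$, $\pi(x) = \xi|_{\{0,\ldots,p\}}$, $\beta = \xi|_{\{p+1,\ldots,p+n+1\}}$), and analogously for $X/d_n\beta$, such a lifting square unpacks into a map $\xi : \Delta^m \to X$ together with a single map
\[
U := (\partial\Delta^m \star \Delta^n) \cup_{\partial\Delta^m \star \Delta^{n-1}} (\Delta^m \star \Delta^{n-1}) \longrightarrow B,
\]
where $\Delta^{n-1} \hookrightarrow \Delta^n$ is the face inclusion $d_n$, with restriction to $\Delta^n$ equal to $\beta$ and restriction to $\Delta^m$ equal to $\pi \circ \xi$. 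Producing the required lift is then equivalent to extending this map along the inclusion $U \hookrightarrow \Delta^m \star \Delta^n = \Delta^{m+n+1}$, so it suffices to prove that this inclusion is inner anodyne.

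To prove this, I enumerate the faces of $\Delta^{m+n+1}$ not contained in $U$. A face lies in $\partial\Delta^m \star \Delta^n$ iff it omits some vertex of $\{0,\ldots,m\}$, and in $\Delta^m \star \Delta^{n-1}$ iff it omits the vertex $m+n+1$, so the missing faces are exactly
\[
F_S := \{0,\ldots,m\} \cup S \cup \{m+n+1\}, \qquad S \subseteq \{m+1,\ldots,m+n\}.
\]
I would attach these to $U$ in stages $k = 0, 1, \ldots, n-1$, where at stage $k$ one adjoins, for each $S \subseteq \{m+1,\ldots,m+n-1\}$ with $|S|=k$, the pair of simplices $F_S$ and $F_{S \cup \{m+n\}}$ together. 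Inspecting the codimension-one faces of $F_{S \cup \{m+n\}}$ shows that, just before stage $k$, every such face except $F_S$ itself is already present: deleting a vertex of $\{0,\ldots,m\}$ gives a face of $\partial\Delta^m \star \Delta^n$, deleting $m+n+1$ gives a face of $\Delta^m \star \Delta^{n-1}$, and deleting a vertex $v \in S$ gives $F_{(S \setminus \{v\}) \cup \{m+n\}}$, which was attached at stage $k-1$. Hence this step is a pushout of the horn inclusion $\Lambda^d_r \hookrightarrow \Delta^d$, where $d = m+|S|+2 = \dim F_{S \cup \{m+n\}}$ and $r = m+1+|S|$ is the position of the vertex $m+n$ inside $F_{S \cup \{m+n\}}$; since $0 < r < d$, this horn is inner. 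Composing these pushouts over all $S$ and all stages exhibits $U \hookrightarrow \Delta^{m+n+1}$ as inner anodyne, and the lemma follows because $B$, being an $\infty$-category, has the right lifting property against every inner anodyne monomorphism.

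The main obstacle is finding the right attachment order. If one naively tried to adjoin the missing faces individually in increasing order of size, one would notice that each $F_S$ has its entire codimension-one boundary already in the subcomplex (each such boundary face either omits a vertex of $\{0,\ldots,m\}$, omits $m+n+1$, or is an $F_{S'}$ with $S' \subsetneq S$); such an attachment would be a pushout of $\partial\Delta^{\dim F_S} \hookrightarrow \Delta^{\dim F_S}$, which is \emph{not} inner anodyne. Pairing $F_S$ with its partner $F_{S \cup \{m+n\}}$ and adjoining them in a single step resolves this: $F_S$ then enters as the missing face of the inner horn used to attach the larger simplex $F_{S \cup \{m+n\}}$.
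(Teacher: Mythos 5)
Your proof is correct and follows essentially the same route as the paper: both reduce the lifting problem to extending a map along the generalized horn inclusion $\bigl(\partial\Delta^m \star \Delta^n\bigr) \cup \bigl(\Delta^m \star d_n\Delta^n\bigr) \hookrightarrow \Delta^{m+n+1}$, whose missing codimension-one faces are exactly the inner faces $d_i$ with $m+1 \le i \le m+n$. The only difference is that the paper simply asserts this inclusion is inner anodyne, whereas you supply the explicit filling by paired inner-horn pushouts — a welcome extra verification, and your combinatorics (the pairing $F_S$, $F_{S\cup\{m+n\}}$ and the count $d = m+|S|+2$, $r = m+|S|+1$) checks out.
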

\begin{proof}
Consider a lifting problem
\[
\xymatrix{
\partial \Delta^m \ar[d] \ar[r]^\xi & X/\beta \ar[d] \\
\Delta^m \ar[r]^-\zeta \ar@{-->}[ur] & X/d_n\beta.
}
\]
For each $0 \leq i \leq m$, the map $\xi$ defines a simplex which we informally write as
\begin{equation*}
\pi(x_0) \rightarrow \cdots \rightarrow \widehat{\pi(x_i)} \rightarrow \cdots \rightarrow \pi(x_m) \rightarrow b_0 \rightarrow \cdots \rightarrow b_n,
\end{equation*}
where the hat denotes omission. The map $\zeta$ defines a simplex
\begin{equation*}
\pi(x_0) \rightarrow \cdots \rightarrow \pi(x_m) \rightarrow b_0 \rightarrow \cdots \rightarrow b_{n-1}.
\end{equation*}
Thus, our lifting problem is equivalent to a lifting problem
\[
\xymatrix{
\Lambda_S^{m+n+1} \ar[d]\ar[r] & B \\
\Delta^{m+n+1} \ar@{-->}[ur], & 
}
\]
where $S = \{m+1, \ldots, m+n\}$ and $\Lambda_S^{m+n+1}$ denotes the union of all the faces $d_i\Delta^{m+n+1}$ for $i$ \emph{not} contained in $S$. Since the missing faces $d_i\Delta^{m+n+1}$ for $i \in S$ are all inner faces, this horn inclusion is inner anodyne and a lift exists by the assumption that $B$ is an $\infty$-category. \end{proof}

\begin{corollary}
\label{cor:finalface}
Let $\beta$ be as in the previous lemma and let $b_0$ denote its initial vertex. Then $X/\beta \rightarrow X/b_0$ is a trivial fibration.
\end{corollary}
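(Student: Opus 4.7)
The plan is to apply the preceding lemma iteratively. Set $\beta^{(n)} := \beta$ and, for $1 \leq k \leq n$, let $\beta^{(k-1)} := d_k \beta^{(k)}$, so that each $\beta^{(k)}$ is a $k$-simplex of $B$; by the simplicial identities, $\beta^{(0)}$ is the initial vertex $\beta_0$ of $\beta$. The canonical map $X/\beta \to X/\beta_0$ then factors as
\begin{equation*}
X/\beta = X/\beta^{(n)} \longrightarrow X/\beta^{(n-1)} \longrightarrow \cdots \longrightarrow X/\beta^{(1)} \longrightarrow X/\beta^{(0)} = X/\beta_0,
\end{equation*}
where the $k$-th arrow is the map $X/\beta^{(k)} \to X/d_k \beta^{(k)}$ furnished by the preceding lemma, applied to the $k$-simplex $\beta^{(k)}$ of the $\infty$-category $B$. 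Each of these maps is a trivial fibration of simplicial sets, and since trivial fibrations are closed under composition, the composite $X/\beta \to X/\beta_0$ is itself a trivial fibration, as required.

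The only thing left to verify is that this composition really does agree with the canonical map induced by the inclusion of the initial vertex of $\beta$. This is immediate from the explicit description of $X/\beta$ recalled before the previous lemma: the $k$-th arrow sends an $m$-simplex $(\xi, \beta')$ with $\beta' \in B_{m+k+1}$ to $(\xi, d_{m+k+1}\beta')$, so that iterating these last-face maps produces the assignment $(\xi, \beta') \mapsto (\xi, \beta'|_{\{0, \ldots, m+1\}})$, which is precisely the canonical restriction associated to the inclusion of the initial vertex. There is no real obstacle in the argument; the corollary is a formal consequence of the preceding lemma via finite iteration, together with the elementary observation that repeatedly discarding the last vertex of $\beta$ eventually leaves only $\beta_0$.
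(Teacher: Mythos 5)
Your proof is correct and is exactly the intended argument: the paper omits the proof of this corollary precisely because it is the evident finite iteration of the preceding lemma (composing the trivial fibrations $X/\beta^{(k)} \to X/d_k\beta^{(k)}$ and noting that repeatedly deleting the last vertex leaves $\beta_0$). Your extra check that the composite agrees with the canonical restriction map is a nice touch but not a point of divergence.
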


\begin{corollary}
\label{cor:equivfatslice}
Suppose $f: X \rightarrow Y$ is a map of simplicial sets over $B$ such that for every vertex $b$ of $B$, the induced map $X/b \rightarrow Y/b$ is a weak equivalence of simplicial sets in the Kan-Quillen model structure. Then the induced map $X // B \rightarrow Y // B$ is a pointwise weak equivalence of diagrams on $(\mathbf{\Delta}/B)^{\mathrm{op}}$.
\end{corollary}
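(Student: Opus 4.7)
The plan is to reduce, for each simplex $\beta$ of $B$, the comparison of $X/\beta \to Y/\beta$ to the comparison over the initial vertex $\beta_0$, and then invoke the hypothesis directly.

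More precisely, I would first recall that under the identification of augmented bisimplicial sets with $\sSets^{(\mathbf{\Delta}/B)^{\mathrm{op}}}$, the value of $X//B$ at an $n$-simplex $\beta \in B_n$ is exactly $X/\beta$, and similarly for $Y$. So the task is to show that, for every simplex $\beta$ of $B$ of any dimension, the induced map $X/\beta \to Y/\beta$ is a Kan--Quillen weak equivalence of simplicial sets.

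Next, for a fixed $n$-simplex $\beta$ with initial vertex $\beta_0$, I would use naturality of the construction $X \mapsto X/(\,\cdot\,)$ to form the commutative square
\[
\xymatrix{
X/\beta \ar[r]\ar[d] & Y/\beta \ar[d] \\
X/\beta_0 \ar[r] & Y/\beta_0.
}
\]
By the corollary immediately preceding the statement (applied both to $X \to B$ and to $Y \to B$, using only that $B$ is an $\infty$-category), the two vertical maps are trivial fibrations, hence Kan--Quillen weak equivalences. The bottom horizontal map is a weak equivalence by the hypothesis on $f$, since $\beta_0$ is a vertex of $B$. Two-out-of-three then forces the top horizontal map to be a weak equivalence.

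There is no real obstacle here: the only content is the observation that the preceding corollary allows us to replace $\beta$ by its initial vertex on both sides compatibly, and then 2-out-of-3 does the rest. The only care needed is to verify that the square is indeed commutative, which is immediate from the fact that the assignment $(\pi\colon X \to B,\ \beta) \mapsto X/\beta$ is functorial in both $X$ and in the simplex $\beta$, so that degenerating $\beta$ down to $\beta_0$ is a single natural transformation of functors in $X$.
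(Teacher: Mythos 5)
Your proof is correct and is exactly the argument the paper intends: the corollary is stated without proof precisely because it follows from the preceding corollary (both vertical maps $X/\beta \to X/\beta_0$ and $Y/\beta \to Y/\beta_0$ are trivial fibrations, naturally in $X$ over $B$) together with the hypothesis at the vertex $\beta_0$ and two-out-of-three. Your remark that the hypothesis that $B$ is an $\infty$-category is needed (it is inherited from the setting of Proposition~\ref{prop:QuillenA}) is also accurate.
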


Note that taking diagonals induces a functor from the category of bisimplicial sets over $cB$ to the category of simplicial sets over $B$.

\begin{proposition}
\label{prop:diag}
Suppose $g: V \rightarrow W$ is a map of bisimplicial sets over $cB$ such that for every simplex $\beta$ of $B$, the induced map $V(\beta) \rightarrow W(\beta)$ is a weak equivalence of simplicial sets. Then $\mathrm{diag}(V) \rightarrow \mathrm{diag}(W)$ is covariant weak equivalence of simplicial sets over $B$.
\end{proposition}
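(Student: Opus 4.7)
The plan is to identify augmented bisimplicial sets over $cB$ with simplicial presheaves on the simplex category $\mathbf{\Delta}/B$. Under this equivalence the hypothesis of the proposition becomes that $g: V \to W$ is a projective (i.e., pointwise) weak equivalence in $\mathbf{sSets}^{(\mathbf{\Delta}/B)^{\mathrm{op}}}$, and the goal is to show $\mathrm{diag}(g)$ is a covariant weak equivalence in $\mathbf{sSets}/B$.

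First I would verify that $\mathrm{diag}: \mathbf{sSets}^{(\mathbf{\Delta}/B)^{\mathrm{op}}} \to \mathbf{sSets}/B$ is a left Quillen functor from the projective to the covariant model structure. The key computation is that on the representable $y(\beta)$ at a simplex $\beta: \Delta^n \to B$, the functor $\mathrm{diag}$ yields simply the simplex $\beta$ itself, viewed as an object of $\mathbf{sSets}/B$. Hence a generating projective trivial cofibration $\Lambda_k^m \otimes y(\beta) \to \Delta^m \otimes y(\beta)$ becomes the map $\Lambda_k^m \times \Delta^n \to \Delta^m \times \Delta^n$ over $B$ (via $\beta$ composed with the projection), which is a covariant trivial cofibration by the simplicial enrichment of the covariant model structure applied to the Kan--Quillen trivial cofibration $\Lambda_k^m \hookrightarrow \Delta^m$. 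Preservation of monomorphisms is immediate from the definition of the diagonal.

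By Ken Brown's lemma, $\mathrm{diag}$ therefore preserves projective weak equivalences between projectively cofibrant objects. To handle general $V$ and $W$, I would use Reedy cofibrant replacements $\tilde V \to V$ and $\tilde W \to W$, afforded by the generalized Reedy structure on $\mathbf{\Delta}/B$, together with a lift $\tilde g: \tilde V \to \tilde W$; then $\tilde g$ is a pointwise weak equivalence between Reedy (hence projectively) cofibrant objects, so $\mathrm{diag}(\tilde g)$ is a covariant weak equivalence.

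The main obstacle is showing that $\mathrm{diag}$ sends the replacement maps $\tilde V \to V$ and $\tilde W \to W$ to covariant weak equivalences. The plan is to argue directly that $\mathrm{diag}$ takes Reedy trivial fibrations to trivial fibrations of simplicial sets, via a bisimplicial lifting argument that exploits the matching-object condition intrinsic to the Reedy property. This condition is strictly stronger than what is provided by a mere projective trivial fibration, and that extra rigidity is precisely what is needed to simultaneously lift an $n$-simplex of $V$ to $\tilde V$ and have its diagonal faces land on the prescribed boundary data (which a priori live in different fibers $\tilde V(d_i\beta)$). Since trivial fibrations in the Kan--Quillen and covariant model structures on $\mathbf{sSets}/B$ coincide (both are characterized by the right lifting property against monomorphisms), these replacement maps are then covariant trivial fibrations, and two-out-of-three yields the desired conclusion for $\mathrm{diag}(g)$.
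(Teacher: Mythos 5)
Your computation that $\mathrm{diag}$ sends the generating projective trivial cofibrations $\Lambda^m_k \otimes y(\beta) \to \Delta^m \otimes y(\beta)$ to covariant trivial cofibrations $\Lambda^m_k \times \Delta^n \to \Delta^m \times \Delta^n$ over $B$ is correct, and so is the reason (simplicial enrichment of the covariant model structure plus $\mathrm{diag}$ converting external products into honest products). But the step where you pass from projectively cofibrant to general objects is broken. You write that the objects $\tilde V, \tilde W$ obtained by Reedy cofibrant replacement are ``Reedy (hence projectively) cofibrant''; that implication goes the wrong way. The identity functor is left Quillen from the projective to the Reedy model structure, so projective cofibrations are contained in Reedy cofibrations and projectively cofibrant objects are contained in Reedy cofibrant ones, not the other way around. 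Consequently Ken Brown's lemma for the \emph{projective} structure does not apply to $\tilde g : \tilde V \to \tilde W$, and the argument does not close. You would also need to actually carry out the ``bisimplicial lifting argument'' showing that $\mathrm{diag}$ carries Reedy trivial fibrations to trivial fibrations over $B$ --- this is true (and it is precisely the matching-object condition that makes it work, as you say) but as written it is a plan, not a proof, and as noted it would not rescue the logic even if supplied.

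The paper's route is both shorter and closes the gap: verify that $\mathrm{diag}$ is left Quillen for the \emph{Reedy} model structure on $\mathbf{sSets}^{(\mathbf{\Delta}/B)^{\mathrm{op}}}$ rather than the projective one. The generating Reedy (trivial) cofibrations are the pushout-products $\partial\Delta^m \,\hat\times\, \beta \cup \Delta^m \,\hat\times\, \partial\beta \to \Delta^m \,\hat\times\, \beta$ (resp.\ with $\Lambda^m_k$); your observation that $\mathrm{diag}$ turns $\hat\times$ into $\times$ handles exactly these, again by the pushout-product axiom for the simplicial covariant structure. Then one invokes the classical fact that on presheaves of simplicial sets over $(\mathbf{\Delta}/B)^{\mathrm{op}}$ every object is Reedy cofibrant (the Reedy and injective model structures coincide). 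With that in hand, Ken Brown's lemma for the Reedy structure already gives the conclusion with no cofibrant replacement at all, and your ``main obstacle'' vanishes. So the fix is: aim for Reedy rather than projective left Quillen, and use the all-objects-Reedy-cofibrant fact instead of a replacement argument.
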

\begin{proof}
The category $\mathbf{\Delta}/B$ is Reedy, so that we may consider the Reedy model structure on the category $\mathbf{sSets}^{(\mathbf{\Delta}/B)^{\mathrm{op}}}$. Its generating cofibrations and trivial cofibrations are the families of maps
\begin{equation*}
\partial\Delta^m \,\hat{\times} \,\beta \cup \Delta^m \,\hat{\times} \,\partial\beta \longrightarrow \Delta^m \,\hat{\times}\,\beta \quad\quad \text{for } m\geq 0
\end{equation*}
and
\begin{equation*}
\Lambda_k^m \,\hat{\times}\, \beta \cup \Delta^m \,\hat{\times}\, \partial\beta \longrightarrow \Delta^m \,\hat{\times}\,\beta \quad\quad \text{for } m \geq 0, \, 0\leq k \leq m
\end{equation*}
respectively, where $\beta$ is an $n$-simplex of $B$ and $\hat{\times}$ denotes the external product defined by $(X \,\hat{\times}\, Y)_{m,n} = X_m \times Y_n$. We claim that the diagonal functor is left Quillen, from the Reedy model structure just described to the covariant model structure on $\mathbf{sSets}/B$. Indeed, the diagonal obviously preserves colimits. Furthermore, it transforms the external product into an actual product, so that the preservation of (trivial) cofibrations is a direct consequence of the fact that the covariant model structure is simplicial. Being a left Quillen functor, the diagonal preserves weak equivalences between cofibrant objects. But it is a classical fact that in the Reedy model structure on bisimplicial sets over $cB$ every object is cofibrant; in fact, the Reedy model structure coincides with the injective one in this case.
\end{proof}

\begin{proof}[Proof of Proposition \ref{prop:QuillenA}]
Let $f: X \rightarrow Y$ be a map satisfying the assumptions of Proposition \ref{prop:QuillenA}. Then by Corollary \ref{cor:equivfatslice} and Proposition \ref{prop:diag}, the map $\mathrm{diag}(X//B) \rightarrow \mathrm{diag}(Y//B)$ is a covariant weak equivalence of simplicial sets over $B$. By Proposition \ref{prop:zigzag2}, the horizontal maps in the following diagram are covariant weak equivalences over $B$:
\[
\xymatrix{
X \ar[d]_f \ar[r]^-\iota & L(X) \ar[d] & \mathrm{diag}(X//B) \ar[l]_-\gamma \ar[d] \\
Y \ar[r]^-\iota & L(Y) & \mathrm{diag}(Y//B) \ar[l]_-\gamma.
}
\]
We conclude that $X \rightarrow Y$ itself must also be a covariant equivalence over $B$.
\end{proof}

In the above proof we invoked Proposition \ref{prop:zigzag2}, for which we still need the following:

\begin{lemma}
\label{lem://leftQ}
Let $B$ be an $\infty$-category and $\beta$ an $n$-simplex of $B$ with initial vertex $b_0$. Then $\mathrm{diag}(b_0//B) \rightarrow \mathrm{diag}(\beta//B)$ is a covariant weak equivalence over $B$.
\end{lemma}
\begin{proof}
We will make use of a trisimplicial set $b_0 // \beta // B$, of which a $(p,q,r)$-simplex $\xi$ is a pair $(\xi_1, \xi_2)$ as follows:
\begin{itemize}
\item[(i)] $\xi_1$ is a map $\Delta^p \star \Delta^q \rightarrow \Delta^n$.
\item[(ii)] $\xi_1$ is a degenerate simplex whose restriction to $\Delta^p$ is constant with value $0$.
\item[(iii)] $\xi_2$ is a map  $\Delta^p \star \Delta^q \star \Delta^r \rightarrow B$, i.e. a $(p+q+r+2)$-simplex of B.
\item[(iv)] The map $\xi_2$ should be compatible with $\xi_1$, in the sense that $\beta \xi_1$ is the restriction of $\xi_2$ to $\Delta^p \star \Delta^q$.
\end{itemize}
If we use the heuristic notation $b_0 \rightarrow b_1 \rightarrow \cdots \rightarrow b_n$ for $\beta$, we could depict such a simplex $\xi$ as
\begin{equation*}
\underbrace{b_0 = \cdots = b_0}_{p} \rightarrow \underbrace{b_{\xi_1(p+1)} \rightarrow \cdots \rightarrow b_{\xi_1(p+q+1)}}_{q} \rightarrow \underbrace{c_0 \rightarrow \cdots \rightarrow c_r}_r,
\end{equation*}
where $c_i = \xi_2(p+q+2+i)$. We use the notation $\mathrm{con}_p(\beta//B)$ for the trisimplicial set whose $(p,q,r)$-simplices are $(\beta//B)_{q,r}$ and $\mathrm{diag}_{p,q}(b_0 // \beta // B)$ for the bisimplicial set whose $(p,r)$-simplices are $(b_0 // \beta // B)_{p,p,r}$. First observe that we may factor the map $b_0//B \rightarrow \beta//B$ by maps
\begin{equation*}
b_0 // B \xrightarrow{\varphi} \mathrm{diag}_{p,q}(b_0 // \beta // B) \xrightarrow{\psi} \beta // B.
\end{equation*}
Here $\varphi$ sends a $(p,r)$-simplex $\xi$ of $b_0 // B$, which we may depict as
\begin{equation*}
\underbrace{b_0 = \cdots = b_0}_p \rightarrow \underbrace{c_0 \rightarrow \cdots \rightarrow c_r}_r,
\end{equation*}
to the $(p,r)$-simplex with `degenerate middle'
\begin{equation*}
\underbrace{b_0 = \cdots = b_0}_p = \underbrace{b_0 = \cdots = b_0}_{p} \rightarrow  \underbrace{c_0 \rightarrow \cdots \rightarrow c_r}_r
\end{equation*}
of $\mathrm{diag}_{p,q}(b_0 // \beta // B)$, more formally defined as the degenerate simplex $s_p^{p+1}\xi$. The map $\psi$ is the $(p,q)$-diagonal of a map $\Psi: b_0 // \beta // B \rightarrow \mathrm{con}_{p}(\beta//B)$, defined by sending a simplex $\xi = (\xi_1, \xi_2)$ of $b_0 // \beta // B$ to the simplex $(d_0^{p+1}\xi_1, d_0^{p+1}\xi_2)$ of $\beta //B$; more informally, it sends
\begin{equation*}
\underbrace{b_0 = \cdots = b_0}_{p} \rightarrow \underbrace{b_{\xi_1(p+1)} \rightarrow \cdots \rightarrow b_{\xi_1(2p+1)}}_{q} \rightarrow \underbrace{c_0 \rightarrow \cdots \rightarrow c_r}_r
\end{equation*}
to
\begin{equation*}
\underbrace{b_{\xi_1(p+1)} \rightarrow \cdots \rightarrow b_{\xi_1(2p+1)}}_{q} \rightarrow \underbrace{c_0 \rightarrow \cdots \rightarrow c_r}_r.
\end{equation*}
Note that $\varphi$ admits an evident retraction $\rho$, simply applying the $(p+1)$-fold face operator $d_{p+1}^{p+1}$. In fact, an easy argument (like the one at the end of the proof of Proposition \ref{prop:zigzag}) shows that for fixed $r$, the maps of simplicial sets
\begin{equation*}
\varphi: (b_0 // B)_{\bullet, r} \rightarrow \mathrm{diag}_{p,q}(b_0 // \beta // B)_{\bullet, r} \quad \text{and} \quad \rho: \mathrm{diag}_{p,q}(b_0 // \beta // B)_{\bullet, r} \rightarrow (b_0 // B)_{\bullet, r}
\end{equation*}
are part of a deformation retract. In particular, it follows from Proposition \ref{prop:diag} that $\mathrm{diag}(\varphi)$ is a covariant weak equivalence over $B$.

It remains to show that $\mathrm{diag}(\psi)$ is a covariant weak equivalence over $B$. We will show that for fixed $q$ and $r$, the map $\Psi$ gives a weak equivalence of simplicial sets
\begin{equation*}
\Psi_{\bullet,q,r}: (b_0 // \beta // B)_{\bullet,q,r} \longrightarrow \mathrm{con}_p(\beta//B)_{\bullet,q,r}.
\end{equation*}
It follows that $\psi$, which equals $\mathrm{diag}_{p,q}(\Psi)$, is a weak equivalence of simplicial sets for fixed $r$, so that another application of Proposition \ref{prop:diag} proves that $\mathrm{diag}(\psi)$ is indeed a covariant weak equivalence of simplicial sets over $B$.

To prove our claim about $\Psi_{\bullet,q,r}$, first observe that by the same arguments used to prove Lemma \ref{lem:finalface} and Corollary \ref{cor:finalface} we may reduce to the case $r=0$. In this case $\Psi_{\bullet,q,r}$ is a trivial fibration: indeed, consider a lifting problem
\[
\xymatrix{
\partial\Delta^p \ar[d]\ar[r]^-{f} & (b_0 // \beta // B)_{\bullet,q,0} \ar[d] \\
\Delta^p \ar[r]_-g & \mathrm{con}_p(\beta//B)_{\bullet,q,0}.
}
\]
Let $S$ be the set $\{p+1, \ldots, p+q+1\}$, which defines a horn $\Lambda_S^{p+q+2}$ consisting of all the faces $d_i\Delta^{p+q+2}$ for $i$ not in $S$. Then we may form a corresponding diagram
\[
\xymatrix{
\Lambda_S^{p+q+2} \ar[r]^-F\ar[d] & B \\
\Delta^{p+q+2}.
}
\]
Indeed, on a face $d_i\Delta^{p+q+2}$ for $i= 0, \ldots, p$ we define $F$ to be the simplex of $B$ corresponding to the evaluation of $f$ on the face $d_i\Delta^p$. To define $F$ on the final face $d_{p+q+2}\Delta^{p+q+2}$, note that $g$ defines a simplex of $B$ which we may depict as
\begin{equation*}
\underbrace{b_{i_0} \rightarrow b_{i_1} \rightarrow \cdots \rightarrow b_{i_q}}_q \rightarrow c_0.
\end{equation*}
We let $F|_{d_{p+q+2}\Delta^{p+q+2}}$ be the simplex
\begin{equation*}
\underbrace{b_0 = \cdots = b_0}_{p} \rightarrow \underbrace{b_{i_0} \rightarrow \cdots \rightarrow b_{i_q}}_q
\end{equation*}
of $B$ defined by taking the map $\Delta^{p+q+1} \rightarrow \Delta^n$ sending the first $p$ vertices to $0$ and the $(p+1+j)$'th to $i_j$ and then composing with $\beta: \Delta^n \rightarrow B$. One verifies that the maps on the various faces defined in this way assemble to give $F$ as in the diagram. Since $B$ is assumed to be an $\infty$-category, there exists an extension $\widehat{F}: \Delta^{p+q+2} \rightarrow B$, which in turn defines a map $\Delta^p \rightarrow (b_0 // \beta // B)_{\bullet,q,0}$ solving the original lifting problem.
\end{proof}

\section{A version of Quillen's Theorem B}
\label{sec:QuillenB}

In this final section we prove Proposition \ref{prop:QuillenB}. First we rephrase the condition of its statement:

\begin{lemma}
\label{lem:locconstant}
Let $X \rightarrow B$ be a map of simplicial sets and suppose $B$ is an $\infty$-category. If the map $X/\beta \rightarrow X/d_0\beta$ is a weak equivalence of simplicial sets for every 1-simplex $\beta$ of $B$, then $X/\delta \rightarrow X/\gamma$ is a weak equivalence for any map $\gamma \rightarrow \delta$ in the category of simplices of $B$. In other words, the simplicial diagram $X//B$ on $(\mathbf{\Delta}/B)^{\mathrm{op}}$ is homotopy constant.
\end{lemma}
\begin{proof}
Consider the following two kinds of maps in $X//B$:
\begin{itemize}
\item[(i)] Maps $\gamma \rightarrow \delta$ sending the initial vertex of $\gamma$ to the initial vertex of $\delta$.
\item[(ii)] Face inclusions $d_0\beta \rightarrow \beta$, where $\beta$ is a simplex of dimension at least 1.
\end{itemize}
Any map in $\mathbf{\Delta}/B$ is a composition of maps of the types (i) and (ii), so it suffices to treat these. For a map of type (i), we may form the following square:
\[
\xymatrix{
X/\delta \ar[r]\ar[d] & X/\delta(0) \ar@{=}[d] \\
X/\gamma \ar[r] & X/\gamma(0).
}
\]
The horizontal maps are trivial fibrations by Corollary \ref{cor:finalface}, so that the left vertical map is a weak equivalence by two-out-of-three. For a map of type (ii), write $\beta'$ for the restriction $\beta|_{\Delta^{\{0,1\}}}$ of $\beta$ to its first two vertices. Consider the diagram
\[
\xymatrix{
X/\beta \ar[d]\ar[r] & X/\beta' \ar[d] \\
X/d_0\beta \ar[r] & X/d_0\beta'.
}
\]
Again, the horizontal maps are trivial fibrations and now the right vertical map is a weak equivalence by assumption. It follows that the left vertical map is a weak equivalence.
\end{proof}

To compute the homotopy fiber of $p: X \rightarrow B$, we may as well replace it by the map $\mathrm{diag}(X//B) \rightarrow B$. Indeed, by Proposition \ref{prop:zigzag2}, these maps have the same homotopy type in the covariant model structure over $B$. The Kan-Quillen model structure is a localization of the covariant one, so that the same is true there. Consider the well-known `initial vertex' map $\lambda: N(\mathbf{\Delta}/B)^{\mathrm{op}} \rightarrow B$. It sends a simplex $\beta$ of $B$ to its initial vertex $\beta(0)$; more generally, it sends a chain of simplices
\begin{equation*}
\beta_0 \xleftarrow{f_1} \beta_1 \xleftarrow{f_2} \cdots \xleftarrow{f_n} \beta_n
\end{equation*}
to the simplex
\begin{equation*}
\beta_0\bigl(0 \rightarrow f_1(0) \rightarrow f_2f_1(0) \rightarrow \cdots \rightarrow f_n \cdots f_1(0)\bigr).
\end{equation*}
It is a classical fact that $\lambda$ is a weak equivalence. To prove this, note that $N(\mathbf{\Delta}/-)^{\mathrm{op}}$, as a functor from the category of simplicial sets to itself, preserves colimits and monomorphisms. By skeletal induction one then reduces to the case of a simplex $\Delta^n$, where it is straightforward to see that $N(\mathbf{\Delta}/\Delta^n)^{\mathrm{op}}$ is weakly contractible. 

\begin{lemma}
\label{lem:kappa}
There is a weak homotopy equivalence $\kappa: h_!(X//B) \rightarrow \mathrm{diag}(X//B)$ making the following square commute:
\[
\xymatrix{
\mathrm{diag}(X//B) \ar[d] & h_!(X//B) \ar[l]_-{\kappa}\ar[d] \\
B & N(\mathbf{\Delta}/B)^{\mathrm{op}} \ar[l]^-\lambda.
}
\]
\end{lemma}
\begin{proof}
As in the previous section, we have interpreted $X//B$ as a simplicial diagram on $(\mathbf{\Delta}/B)^{\mathrm{op}}$ in order to apply $h_!$ to it. The map $\kappa$ will be the diagonal of a map of bisimplicial sets 
\begin{equation*}
K: N\Bigl(\int_{(\mathbf{\Delta}/B)^{\mathrm{op}}} X//B\Bigr) \longrightarrow X//B.
\end{equation*}
For fixed $m$, we have the following identifications of the simplicial sets involved:
\begin{eqnarray*}
(X//B)_{m, \bullet} & = & \coprod_{\xi_1 \in X_m} p(\xi_1)/B \\
N\Bigl(\int_{(\mathbf{\Delta}/B)^{\mathrm{op}}} X//B\Bigr)_{m, \bullet} & = & \coprod_{\xi_1 \in X_m} N\Bigl(\int_{\mathbf{\Delta}^{\mathrm{op}}}p(\xi_1)/B\Bigr).
\end{eqnarray*}
Note that we have written $\int_{\Delta^{op}} A$ rather than $\mathbf{\Delta}/A$ for the category of simplices of a simplicial set $A$, to avoid obvious notational issues. There is an evident map between the two simplicial sets above, namely the coproduct of initial vertex maps
\begin{equation*}
\lambda: N\Bigl(\int_{\mathbf{\Delta}^{\mathrm{op}}}p(\xi_1)/B\Bigr) \longrightarrow p(\xi_1)/B.
\end{equation*}
One checks that this assignment is also natural in $m$, giving the desired map $K$. We already observed that these initial vertex maps are weak homotopy equivalences, so that the conclusion of the lemma follows upon taking diagonals.
\end{proof}

\begin{proof}[Proof of Proposition \ref{prop:QuillenB}]
By virtue of Lemma \ref{lem:kappa} we have reduced our task to computing the homotopy fiber of the map
\begin{equation*}
h_!(X//B) \longrightarrow N(\mathbf{\Delta}/B)^{\mathrm{op}}
\end{equation*}
over a vertex $b$, regarded as an object of $(\mathbf{\Delta}/B)^{\mathrm{op}}$. Write $X//B \rightarrow (X//B)_f$ for a fibrant replacement of $X//B$ in the projective model structure for simplicial diagrams on $(\mathbf{\Delta}/B)^{\mathrm{op}}$. In other words, this map is a weak equivalence of simplicial sets when evaluated on any simplex $\beta$ of $B$ and $(X//B)_f$ takes values in Kan complexes. Then the adjoint of the map of Proposition \ref{prop:proof1} provides the first of the following two maps:
\begin{equation*}
h_!(X//B) \longrightarrow r^*(X//B) \longrightarrow r^*\bigl((X//B)_f\bigr).
\end{equation*}
The composition of these two maps is a covariant weak equivalence (so in particular a weak homotopy equivalence) by Theorem \ref{thm:c}. By Lemma \ref{lem:locconstant}, the diagram $(X//B)_f$ is a fibrant object in the localized model structure of Proposition \ref{prop:localized} on the category of simplicial diagrams on $N(\mathbf{\Delta}/B)^{\mathrm{op}}$. Therefore $r^*\bigl((X//B)_f\bigr) \rightarrow N(\mathbf{\Delta}/B)^{\mathrm{op}}$ is a Kan fibration. The fiber of this map over $b$ is precisely the value $(X//B)_f(b)$, which is by assumption a Kan complex weakly equivalent to the simplicial set $(X//B)(b) = X/b$, completing the proof.
\end{proof}

\bibliographystyle{plain}
\bibliography{biblio}

\end{document}